\documentclass[11pt, letterpaper]{article}
\usepackage{fullpage}
% if you need to pass options to natbib, use, e.g.:
% \PassOptionsToPackage{numbers, compress}{natbib}
% before loading nips_2018

% ready for submission
%\usepackage{nips_2018}
% style
\usepackage{fullpage}
\usepackage{layout}
\usepackage{multirow}
% \usepackage{cases}

% ams
\usepackage{amsfonts}
\usepackage{amsmath}
\usepackage{amsthm}
\usepackage{amssymb} % NOT COMATIBLE WITH svjour3

% shaded theorems
\usepackage{mdframed} 
\usepackage{thmtools}

%\newtheoremstyle{selfdefined}
%{12pt}% Space above
%{12pt}% Space below
%{}% Body font \itshape =italian style 
%{}% Indent amount
%{\bfseries}% Theorem head font
%{}% Punctuation after theorem heading
%{\newline}% Space after theorem heading, 0.5em => in gleicher zeile weiter
%{}% Theorem head spec (can be left empty, meaning ‘normal’)

\definecolor{shadecolor}{gray}{0.95}
\declaretheoremstyle[
headfont=\normalfont\bfseries,
notefont=\mdseries, notebraces={(}{)},
bodyfont=\normalfont,
postheadspace=0.5em,
spaceabove=1pt,
mdframed={
  skipabove=8pt,
  skipbelow=8pt,
  hidealllines=true,
  backgroundcolor={shadecolor},
  innerleftmargin=4pt,
  innerrightmargin=4pt}
]{shaded}

% captions
% \usepackage{caption}
% \usepackage{subcaption}

% graphics
% \usepackage[dvips]{graphicx}
% \usepackage{colordvi,epsfig}
 \usepackage{xcolor}
\usepackage{color}
\usepackage{graphicx}
% \graphicspath{{./figures/}}  % Path to figures folder

% algorithms
% \usepackage{algorithm}
% \usepackage{algorithmic}
% \usepackage[noend]{algpseudocode}
\usepackage{verbatim}

% various
% \usepackage{url}
%\usepackage[cp1250]{inputenc}
%\usepackage[T1]{fontenc}
%\usepackage{calligra}
%\usepackage[slovak]{babel}
%\usepackage{charter}

% What is this?
% \PassOptionsToPackage{normalem}{ulem}
% \usepackage{ulem}
%% Change tracking with ulem

%%%%%%%%%%%%%%%%%%%%%%%%%
%%%%%% BIBLIOGRAPHY
%%%%%%%%%%%%%%%%%%%%%%%%%

%\usepackage[maxbibnames=99, maxcitenames=10,doi=false,isbn=false,url=false,backend=bibtex]{biblatex}
%%\bibliography{SDA.bib}
%\newcommand{\Ref}[1]{../ref/#1}
%%\input{\Ref{biblatex_journal_def}}

%%% Basic sets
\newcommand{\R}{\mathbb{R}} % Reals
 % Naturals

% caligraphic
\newcommand{\cA}{{\cal A}}

\newcommand{\cN}{{\cal N}}
\newcommand{\cO}{{\cal O}}
\newcommand{\cP}{{\cal P}}

\newcommand{\cX}{{\cal X}}

% bold

% red matrices
%\newcommand{\mA}{{\color{red}\bf A}}
%\newcommand{\mB}{{\color{red}\bf B}}
%\newcommand{\mC}{{\color{red}\bf C}}
%\newcommand{\mE}{{\color{red}\bf E}}
%\newcommand{\mF}{{\color{red}\bf F}}
%\newcommand{\mG}{{\color{red}\bf G}}
%\newcommand{\mH}{{\color{red}\bf H}}
%\newcommand{\mI}{{\color{red}\bf I}}
%\newcommand{\mJ}{{\color{red}\bf J}}
%\newcommand{\mK}{{\color{red}\bf K}}
%\newcommand{\mL}{{\color{red}\bf L}}
%\newcommand{\mM}{{\color{red}\bf M}}
%\newcommand{\mN}{{\color{red}\bf N}}
%\newcommand{\mO}{{\color{red}\bf O}}
%\newcommand{\mP}{{\color{red}\bf P}}
%\newcommand{\mQ}{{\color{red}\bf Q}}
%\newcommand{\mR}{{\color{red}\bf R}}
%\newcommand{\mS}{{\color{red}\bf S}}
%\newcommand{\mT}{{\color{red}\bf T}}
%\newcommand{\mU}{{\color{red}\bf U}}
%\newcommand{\mV}{{\color{red}\bf V}}
%\newcommand{\mW}{{\color{red}\bf W}}
%\newcommand{\mX}{{\color{red}\bf X}}
%\newcommand{\mY}{{\color{red}\bf Y}}
%\newcommand{\mZ}{{\color{red}\bf Z}}

% matrices

% Commenting
\usepackage[colorinlistoftodos,bordercolor=orange,backgroundcolor=orange!20,linecolor=orange,textsize=scriptsize]{todonotes}

%\newcommand{\red}[1]{{\color{red} #1}}
%\newcommand{\blank}[1]{\{#1\}}

%\providecolor{added}{rgb}{0,0,1}
%\providecolor{deleted}{rgb}{1,0,0}
%\newcommand{\added}[1]{{\color{added}{}#1}}
%\newcommand{\deleted}[1]{{\color{deleted}\sout{#1}}}
%\newcommand{\ignore}[1]{}

% basic
\newcommand{\eqdef}{\overset{\text{def}}{=}} 
%\newcommand{\eqdef}{\stackrel{\text{def}}{=}}
 % such that
 % inner
 % product
      % norm 

% sets
       % cardinality of a set
       % diameter of a set
       % minim volume enclosing ellipsoid of a set
         % volume of a set 

% statistical
%\DeclareMathOperator{\Exp}{\mathbf{E}} % expectation
         % covariance
         % variance
       % correlation

% \newcommand{\Prob}{\mathbf{Prob}}

% functions and operators
     % signum/sign of a scalar
         % domain
         % epigraph
% \DeclareMathOperator{\Ker}{null}        % nullspace/kernel
  % nullpsace
% \DeclareMathOperator{\range}{range}     % range
% \DeclareMathOperator{\Image}{Im}        % image
        % argmin
       % proximal operator      

% topology
    % interior
         % relative interior
       % relative interior
       % boundary
           % closure

% vectors, matrices

           % trace
\DeclareMathOperator{\rank}{rank}       % rank
       % convex hull
%\DeclareMathOperator{\Diag}{Diag}       % Diag(v) = diagonal matrix with v_i on the diagonal
\DeclareMathOperator{\diag}{diag}       % diag(D) = the diagonal vector of matrix D
         % Argument

% operators with parentheses
%\newcommand{\normB}[1]{\lVert#1\rVert}
%\newcommand{\dotprodB}[1]{\left< #1\right>}
%\newcommand{\trB}[1]{\mathbf{Tr}\left( #1\right)}

%\providecommand{\rankB}[1]{\mathbf{Rank}\left( #1\right)}
\providecommand{\range}[1]{{\rm Range}\left( #1\right)}

%\providecommand{\projB}[2]{\mbox{proj}_{#1}^{#2}}

 % expectation with subscript
    % expectation
%\newcommand{\inner}[1]{\langle#1\rangle}

%\renewcommand{\qedsymbol}{\ding{114}}

%%%%%%%%%%%%%%%%%%%%%%%%%
%%%%%% THEOREMS 
%%%%%%%%%%%%%%%%%%%%%%%%%
%\newtheorem{assumption}{Assumption}
%\newtheorem{lemma}{Lemma}
%\newtheorem{algorithms}{Algorithm}
%\newtheorem{theorem}{Theorem}
%\newtheorem{proposition}{Proposition}
%\newtheorem{example}{Example}
%\newtheorem{remark}{Remark}
%\newtheorem{claim}[theorem]{Claim}
%\newtheorem{corollary}{Corollary}
%\newtheorem{exercise}[theorem]{Exercise}
%
%\theoremstyle{definition}
%\newtheorem{definition}[theorem]{Definition}

\declaretheorem[within=section]{definition}
\declaretheorem[sibling=definition]{theorem}

%\declaretheorem[style=shaded,sibling=definition]{conjecture}
\declaretheorem[sibling=definition]{lemma}
%\declaretheorem[style=shaded,sibling=definition]{example}
%\declaretheorem[style=shaded,numbered=no]{algorithm}

%\theoremstyle{plain}
%\newtheorem{theorem}{Theorem}  %[section]
%\newtheorem{lemma}[theorem]{Lemma} %[section]
%\newtheorem{proposition}[theorem]{Proposition} %[section] 
%\newtheorem{corollary}[theorem]{Corollary} %[section]

% \theoremstyle{definition}
%\newtheorem{assumption}{Assumption} %[section]
% \newtheorem{definition}{Definition}% [section]

\theoremstyle{remark}
\newtheorem{example}{Example} %[section]
\newtheorem{remark}{Remark} %[section]
\usepackage[colorlinks=true,linkcolor=blue]{hyperref}

\usepackage[noend]{algpseudocode}
\usepackage{verbatim}
\usepackage[ruled,vlined]{algorithm2e}
\usepackage{wrapfig}

 \usepackage{xcolor}
\usepackage{color}
\usepackage{graphicx}
\usepackage{fullpage}

\usepackage{hyperref}

\usepackage{subcaption}
% to compile a preprint version, e.g., for submission to arXiv, add
% add the [preprint] option:
% \usepackage[preprint]{nips_2018}

% to compile a camera-ready version, add the [final] option, e.g.:
%\usepackage[final]{nips_2018}

% to avoid loading the natbib package, add option nonatbib:
% \usepackage[nonatbib]{nips_2018}

%\usepackage[utf8]{inputenc} % allow utf-8 input
%\usepackage[T1]{fontenc}    % use 8-bit T1 fonts
\usepackage{hyperref}       % hyperlinks
\usepackage{url}            % simple URL typesetting
\usepackage{booktabs}       % professional-quality tables
\usepackage{amsfonts}       % blackboard math symbols
\usepackage{nicefrac}       % compact symbols for 1/2, etc.
\usepackage{microtype}      % microtypography

\title{A Nonconvex Projection Method for Robust PCA}

% The \author macro works with any number of authors. There are two
% commands used to separate the names and addresses of multiple
% authors: \And and \AND.
%
% Using \And between authors leaves it to LaTeX to determine where to
% break the lines. Using \AND forces a line break at that point. So,
% if LaTeX puts 3 of 4 authors names on the first line, and the last
% on the second line, try using \AND instead of \And before the third
% author name.

\author{Aritra Dutta \thanks{Division of Computer, Electrical and Mathematical Sciences and Engineering (CEMSE), King Abdullah University of Science and Technology (KAUST), Thuwal, Saudi Arabia, 
e-mail: aritra.dutta@kaust.edu.sa.} \qquad 
      Filip Hanzely \thanks{Division of Computer, Electrical and Mathematical Sciences and Engineering (CEMSE), King Abdullah University of Science and Technology (KAUST), Thuwal, Saudi Arabia, e-mail: filip.hanzely@kaust.edu.sa.} \qquad
      Peter Richt\'{a}rik
        \thanks{Division of Computer, Electrical and Mathematical Sciences and Engineering (CEMSE), King Abdullah University of Science and Technology (KAUST), Kingdom of Saudi Arabia, e-mail: peter.richtarik@kaust.edu.sa --- School of Mathematics,  University of Edinburgh, United Kingdom, e-mail: peter.richtarik@ed.ac.uk --- Moscow Institute of Physics and Technology, Dolgoprudny, Russia}}

%  {Aritra Dutta \\
%  KAUST\\
%  Thuwal, KSA \\
%  \texttt{aritra.dutta@kaust.edu.sa}} \\
%\And
%  {Filip Hanzely \\
%  KAUST\\
%  Thuwal, KSA \\
%  \texttt{filip.hanzely@kaust.edu.sa}} \\
%\AND
%{Peter Richt\'{a}rik \\
%  KAUST\\
%  Thuwal, KSA \\
%  \texttt{peter.richtarik@kaust.edu.sa}} \\
%}
\date{}

\begin{document}
% \nipsfinalcopy is no longer used

\maketitle

\begin{abstract}
Robust principal component analysis (RPCA) is a well-studied problem whose goal is to decompose a matrix into the sum of low-rank and sparse components. In this paper, we propose a nonconvex  feasibility reformulation of RPCA problem and apply an alternating projection method to solve it. To the best of our knowledge, this is the first paper proposing a method that solves RPCA problem without considering any objective function, convex relaxation, or surrogate convex constraints. %Rather, we tackle the problem in its native form: given $A$, find a low rank matrix $L$ and a sparse matrix $S$ such that $A=L+S$. 
We demonstrate through extensive numerical experiments on a variety of applications, including shadow removal, background estimation, face detection, and galaxy evolution, that our approach matches and often significantly outperforms current state-of-the-art in various ways. 
\end{abstract}

\noindent Principal component analysis (PCA) \cite{pca} addresses the problem of best approximation of a matrix $A\in\mathbb{R}^{m\times n}$ by a matrix of rank $r$: \begin{eqnarray}\label{pca}
X^*=\arg\min_{\substack{{X}\in\mathbb{R}^{m\times n}\\{\rm rank}({X})\le r}}\|A-{X}\|_F^2,
\end{eqnarray}
%~\\[-0.15in]
where $\|\cdot\|_F$ denotes the Frobenius norm of matrices. The solution to \eqref{pca} is given by
\begin{align}\label{hardthresholding}
X^*=H_r(A)\eqdef U\Sigma_rV^T,
\end{align}
%~\\[-0.2in]
where $A$ has singular value decompositions 
$
A=U\Sigma V^T,
$
and $\Sigma_r(A)$ is the diagonal matrix obtained from $\Sigma$ by hard-thresholding that keeps the $r$  largest singular values only  and replaces the other singular values by 0. In many real-world problems, if sparse large errors or outliers are present in the data matrix, PCA fails to deal with it. Therefore, it is natural to consider a {\em robust} matrix decomposition model in which we wish to decompose $A$ into the sum of a low-rank matrix $L$ and an error matrix $S$: $A=L+S$. However, without further assumptions, the problem is ill-posed. We assume that the error matrix $S$ is {\em sparse} and that it allows its entries to have arbitrarily large magnitudes. That is, given $A$, we consider the problem of finding a low rank matrix $L$ and a sparse matrix $S$ such that
\begin{eqnarray}\label{mainproblem}
A=L+S.
\end{eqnarray}
%~\\[-0.2in]
In this context, the celebrated principal component pursuit (PCP) formulation of the problem uses the $\ell_0$ norm (cardinality) to address the sparsity constraint and \eqref{mainproblem}. Therefore, PCP is the constrained minimization problem  \cite{candeslimawright,rpca_1}: 
\begin{eqnarray}\label{pcp}
\min_{L,S}{\rm rank}(L)+\lambda\|S\|_{\ell_0} 
\quad \text{subject to} \quad A=L+S,
\end{eqnarray}
%~\\[-0.2in]
where $\lambda>0$ is a balancing parameter. Since both ${\rm rank}(L)$ and $\|S\|_{\ell_0}$ are non-convex, one often replaces the rank function by the (convex) nuclear norm and $\ell_0$ by the (convex)  $\ell_1$ norm. This replacement leads to the immensely  popular  {\em robust principal component analysis (RPCA)} \cite{APG,LinChenMa,candeslimawright}, which can be seen as a convex relaxation of \eqref{pcp}: 
%~\\[-0.2in]
\begin{eqnarray}\label{rpca}
\min_{L,S}\|L\|_*+\lambda\|S\|_{\ell_1} \quad \text{subject to} \quad  A=L+S,
\end{eqnarray}
%~\\[-0.2in]
where $\|\cdot\|_*$ denotes the nuclear norm (sum of the singular values) of matrices. Under some reasonable assumptions on the low-rank and sparse components, \cite{rpca_1,candeslimawright} showed that \eqref{pcp} can be provably solved via \eqref{rpca}. A vast literature is dedicated to solving the RPCA problem, and among them the exact and inexact augmented Lagrangian method of multipliers \cite{LinChenMa}, accelerated proximal gradient method \cite{APG}, alternating direction method \cite{adm_rpca}, alternating projection with intermediate denoising \cite{NIPS2014_5430}, dual approach \cite{dual_rpca}, and SpaRCS \cite{SpaRCS} are a few popular ones. Recently, Yi et al.\ \cite{RPCAgd}, 
Zhang and Yang \cite{zhangpca} proposed  a manifold optimization to solve RPCA. We refer the reader to \cite{rpca_methods} for a comprehensive review of the RPCA algorithms. However, besides formulation \eqref{rpca}, other tractable reformulations of \eqref{pcp} exist as well. For instance, by relaxing the equality constraint in \eqref{pcp} and moving it to the objective as a penalty, together with adding explicit constraints on the target rank $r$ and target sparsity $s$ leads to the following formulation \cite{godec}:
\begin{eqnarray}\label{rpca1}
&\min_{L,S}\|A-L-S\|_{F}^2 \nonumber\\ &\text{subject to} \quad {\rm rank}(L)\le r\;\;{\rm and}\;\;\|S\|_0\le s.
\end{eqnarray}
%~\\[-0.17in]
One can extend the above model to the case of partially observed data that leads to the {\em robust matrix completion (RMC)} problem \cite{Chen_RMC,rmc_taoyuan,CherapanamjeriGJ17}: 
\begin{eqnarray}\label{rmc}
&\min_{L,S}\|\cP_{\Omega}(A-L-S)\|_{F}^2\nonumber\\ &\text{subject to} \quad {\rm rank}(L)\le r\;\;{\rm and}\;\;\|\cP_{\Omega}(S)\|_0\le s',
\end{eqnarray}
%~\\[-0.17in]
where $\Omega\subseteq [m]\times [n]$ is the set of observed data entries, and $\mathcal{P}_{\Omega}$ is the restriction operator defined by \[\left(\mathcal{P}_{\Omega} [X]\right)_{ij} =\begin{cases} X_{ij} & (i,j)\in\Omega\\
0 & \text{otherwise.}
\end{cases}\]  We note that with some modifications, problem \eqref{rpca1} is contained in the larger class of problem presented by \eqref{rmc}. We also refer to some recent work on RMC problem or outlier based PCA in \cite{CherapanamjeriJN17,CherapanamjeriGJ17}. An extended model of \eqref{rpca} can also be referred to as a more general problem as in \cite{rmc_taoyuan} (see problem (1.2) in \cite{rmc_taoyuan}). More specifically, when $\Omega=[m]\times [n]$, that is, the whole matrix is observed, then \eqref{rmc} is \eqref{rpca1}. One can also think of the matrix completion (MC) problem as a special case of \eqref{rmc} \cite{candesplan,Jain,caicandesshen,JN15,candes_MC,KeshavanMC,candes_MC2,Marecek_MC}. For MC problems, $S=0$. Therefore, \eqref{rmc} is a generalization of two fundamental problems: RPCA and RMC. 

\textbf{Contributions.} We solved the RPCA and RMC problems by addressing the original decomposition problem \eqref{mainproblem} directly, without introducing any optimization objective or surrogate constraints. This is a novel approach because we aim to find a point at the intersection of three sets, two of which are non-convex. We formulate both RPCA and RMC as set feasibility problems and propose alternating projection algorithm to solve them. This leads to Algorithm~\ref{rpca_algo} and~3. Our approach is described in the next section. We also propose a convergence analysis of our algorithm. 

Our feasibility approach does not require one to use the hard to interpret  parameters (such as $\lambda$) and surrogate functions (such as the nuclear norm, or $\ell_1$ norm) which makes our approach unique compared to existing models. Instead, we rely on two direct parameters: the target rank $r$ and the desired level of sparsity~$s$. By performing extensive numerical experiments on both synthetic and real datasets, we show that our approach can match or outperform state-of-the-art methods in solving the RPCA and RMC problems. More precisely, when the sparsity level is low, our feasibility approach can viably reconstruct any target low rank, which the RPCA algorithms can not. Moreover, our approach can tolerate denser outliers than can the state-of-the-art RPCA algorithms when the original matrix has a low-rank structure (see details in the experiment section). These attributes make our approach attractive to solve many real-world problems because our performance matches or outperforms that of state-of-the-art RPCA algorithms in solution quality, and do this in comparable or less time.

\section{Nonconvex Feasibility and Alternating Projections~\label{sec:setfeas}}
Set feasibility problem aims to find a point in the intersection of a collection of closed sets, that is:
\begin{eqnarray}\label{convex_feasibiloity}
{\rm Find}\;\;x\in\cX \qquad \text{where} \qquad  \cX\eqdef\cap_{i}^m\cX_i \neq \emptyset,
\end{eqnarray}
for closed sets $\cX_i$. Usually, sets $\cX_i$s are assumed to be simple and easy to project on. A special case of the above setting for convex sets $\cX_i$ is the convex feasibility problem and is already well studied. In particular, a very efficient convex feasibility algorithm is known as the alternating projection algorithm \cite{kaczmarz1937angenaherte,bauschke1996projection}, in which each iteration picks one set $\cX_i$ and projects the current iterate on it. There are two main methods to choosing the sets $\cX_i$ -- traditional cyclic method and randomized method \cite{strohmer2009randomized,gower2015randomized,necoara2018randomized}, and in general, randomized method is faster and not vulnerable to adversarial set order. 

We also note that the alternating projection algorithm for convex feasibility problem does not converge in general to the projection of the starting point onto $\cX$, but rather finds a close-to feasible point in $\cX$, except the case when $\cX_i$s are affine spaces. However, once an exact projection onto $\cX$ is desired, Dykstra's algorithm \cite{boyle1986method} should be applied.

\begin{algorithm}
\SetAlgoLined
	\SetKwInOut{Input}{Input}
	\SetKwInOut{Output}{Output}
	\SetKwInOut{Init}{Initialize}
	
	\nl\Input{$\Pi_i(\cdot)$ -- Projector onto $\cX_i$ for each $i\in \{1,\dots m\}$, starting point $x_0$ }
	
	%\nl\Init {$x_0$}
	%\BlankLin
	\nl \For{$k=0,1,\dots$}
	{
	     \nl Choose via some rule $i$	 (e.g., cyclically or randomly)
	     
	     \nl $x_{k+1}=\Pi_i(x_k)$	     
	     
	}	
	%\BlankLine
	\nl \Output{$x_{k+1}$}
	\caption{Alternating projection method for set feasibility}\label{alg:feas}
\end{algorithm}

On the other hand, for general nonconvex sets $\cX_i$,  projection algorithms might not  converge. In some special settings, some forms of convergence (for example, local convergence) can  be guaranteed even without convexity \cite{transversalityMOR2008,LLM2009,HL2013,transversality2015,NonconvexFeasPang2015}.

\subsection{Set feasibility for RPCA\label{sec_rpca_feas}}

In this scope, we define $\alpha$-sparsity as it appears in the last convex constraint $\cX_3$. We do it so that our approach is directly comparable to the approaches from~\cite{RPCAgd,zhangpca}. However, we note that the $\ell_0$-ball constraint can be applied as well. 

\begin{definition}[$\alpha$-sparsity] A matrix $S\in\mathbb{R}^{m\times n}$ is considered to be $\alpha$-sparse if each row and column of $S$ contains at most $\alpha n$ and $\alpha m$ nonzero entries, respectively. That is, the cardinality of the support set of each row and column of the matrix $S$ do not exceed $\alpha n$ and $\alpha m$, respectively. Formally, we write
$$
\|S_{(i,.)}\|_0\le \alpha n\;\;{\rm and}\;\;\|S_{(.,j)}\|_0\le\alpha m\;\;{\rm for\;all}\;i\in[m], j\in[n],
$$
where $i^{{\rm th}}$ row and $j^{{\rm th}}$ column of $S$ are $S_{(i,.)}$ and $S_{(.,j)}$, respectively. 
\end{definition}

\begin{figure*}[h]
    \centering
    \begin{subfigure}{0.24\textwidth}
    \includegraphics[width=\textwidth]{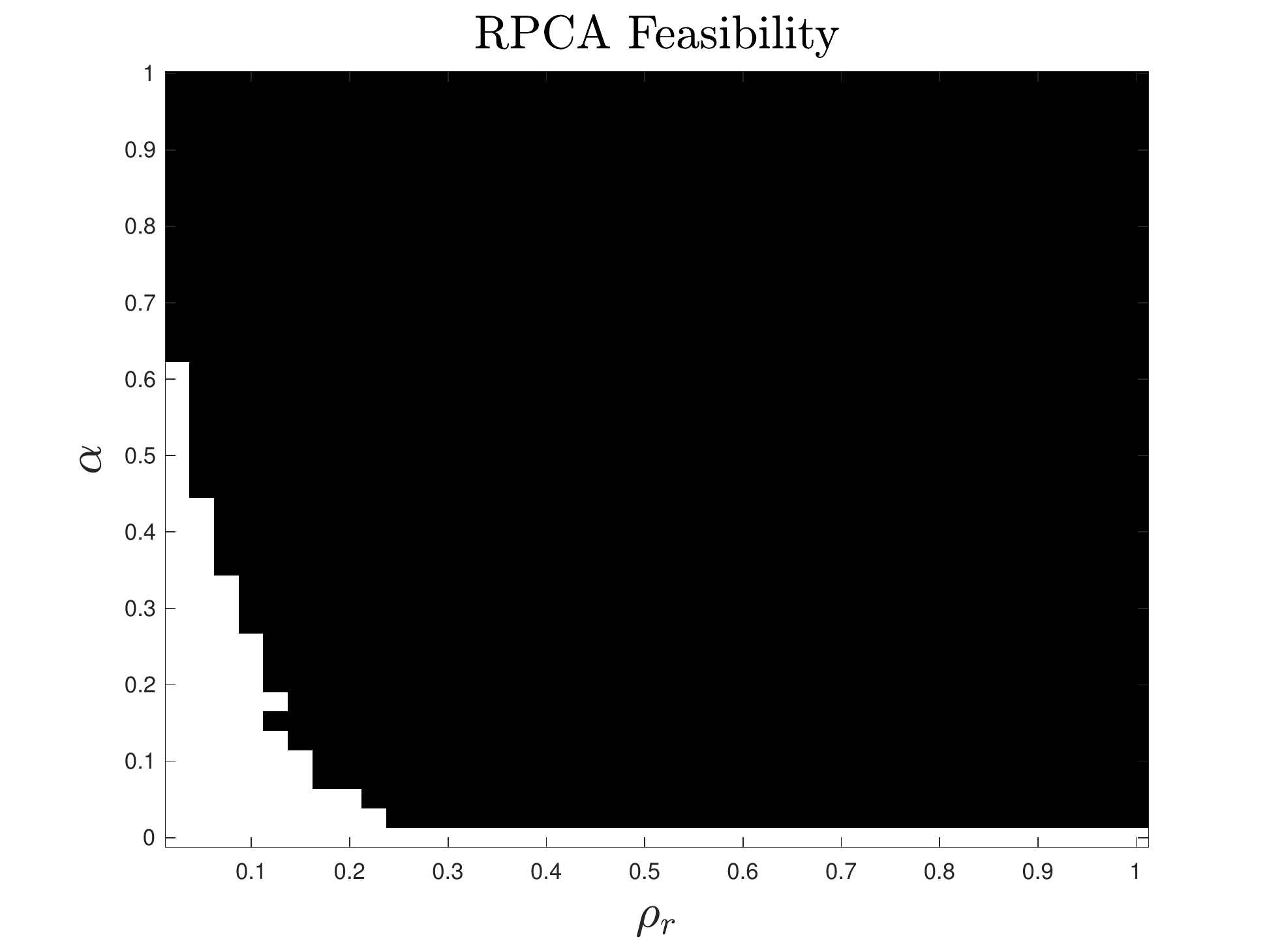}
        \caption{}
      \label{1a}
      \end{subfigure}
    \begin{subfigure}{0.24\textwidth}
    \includegraphics[width = \textwidth]{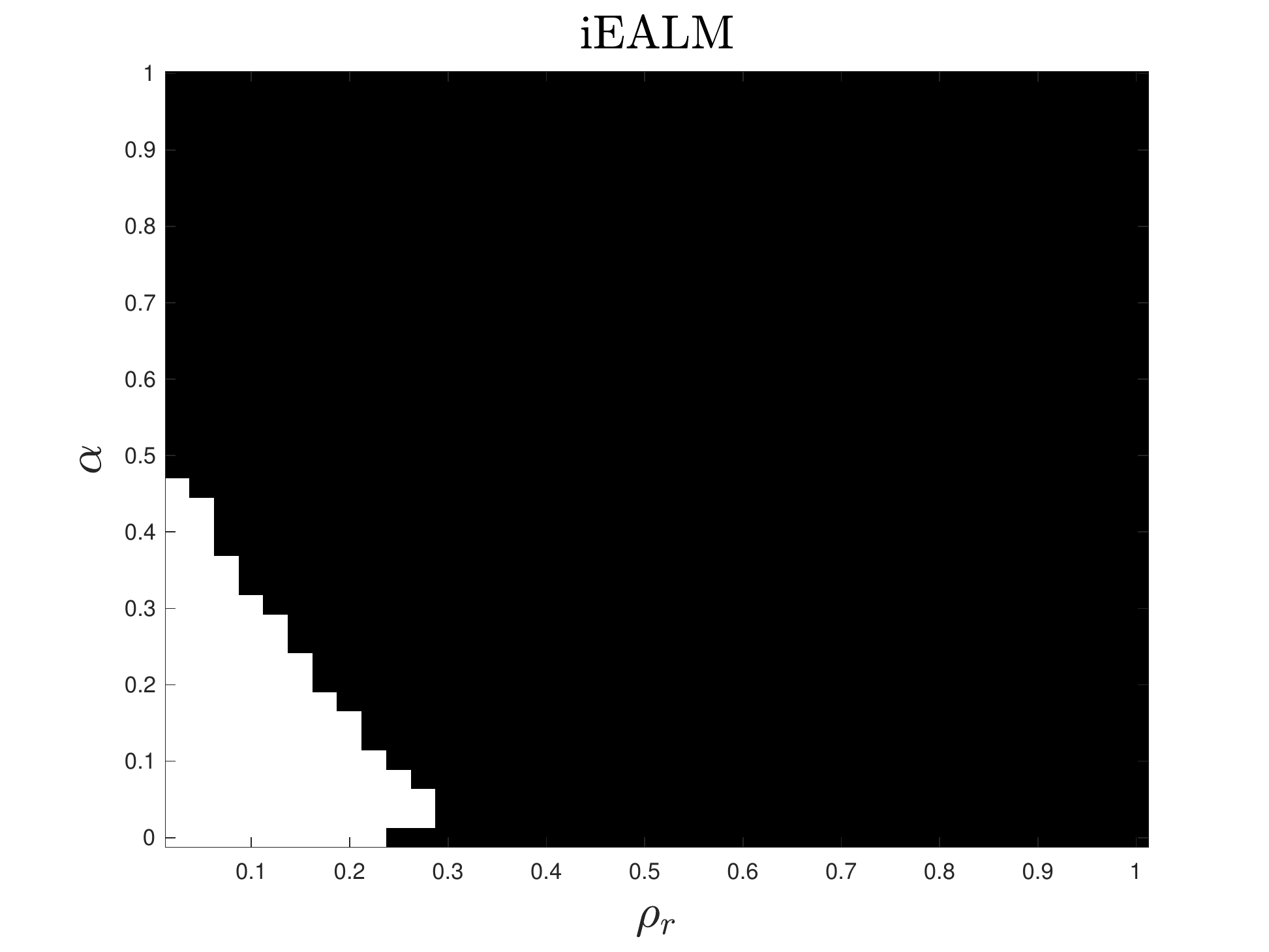}
    \caption{}
    \label{1b}
  \end{subfigure} 
   \begin{subfigure}{0.24\textwidth}
    \includegraphics[width = \textwidth]{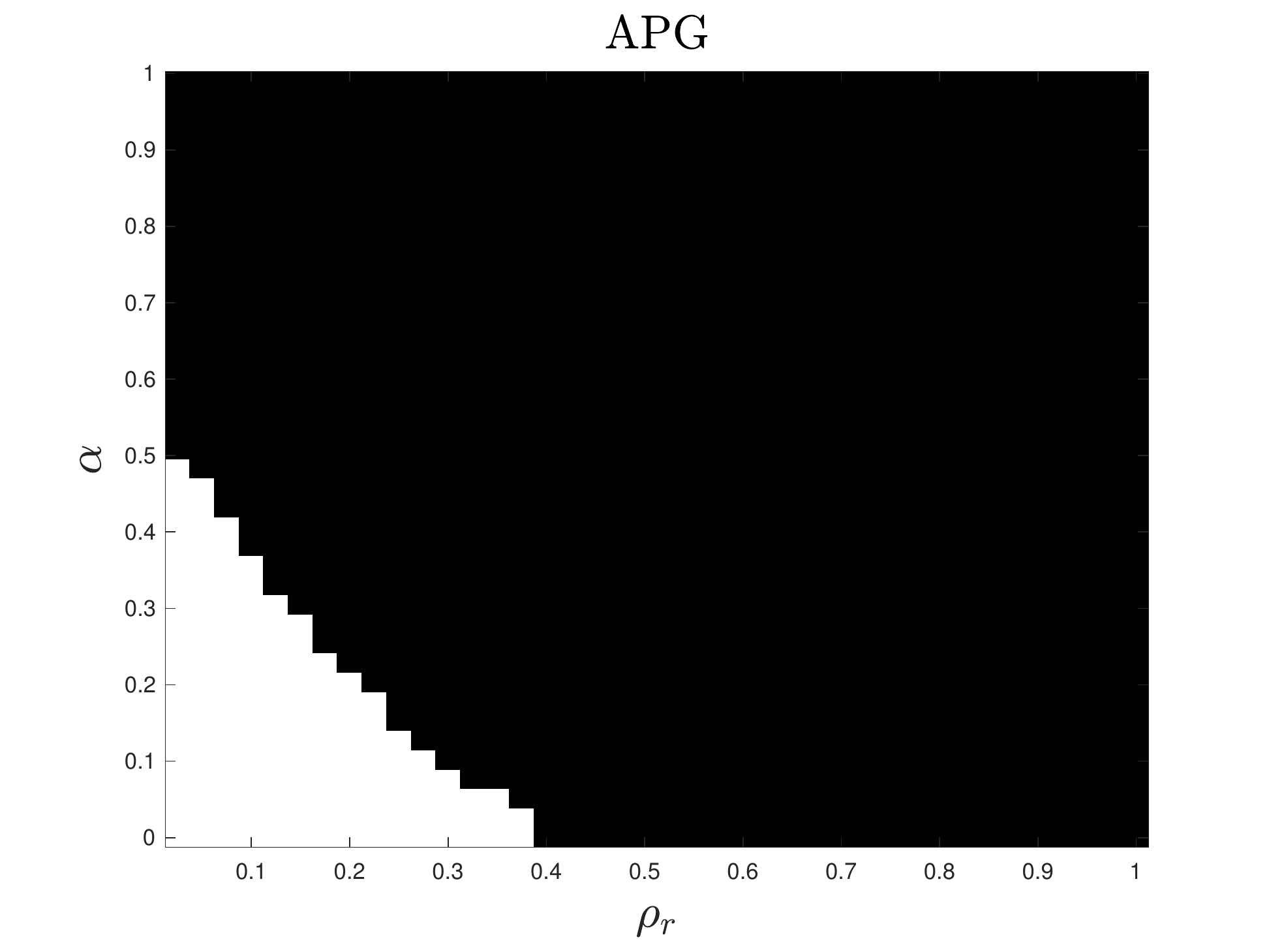}
    \caption{}
    \label{1c}
  \end{subfigure} 
   \begin{subfigure}{0.24\textwidth}
    \includegraphics[width = \textwidth]{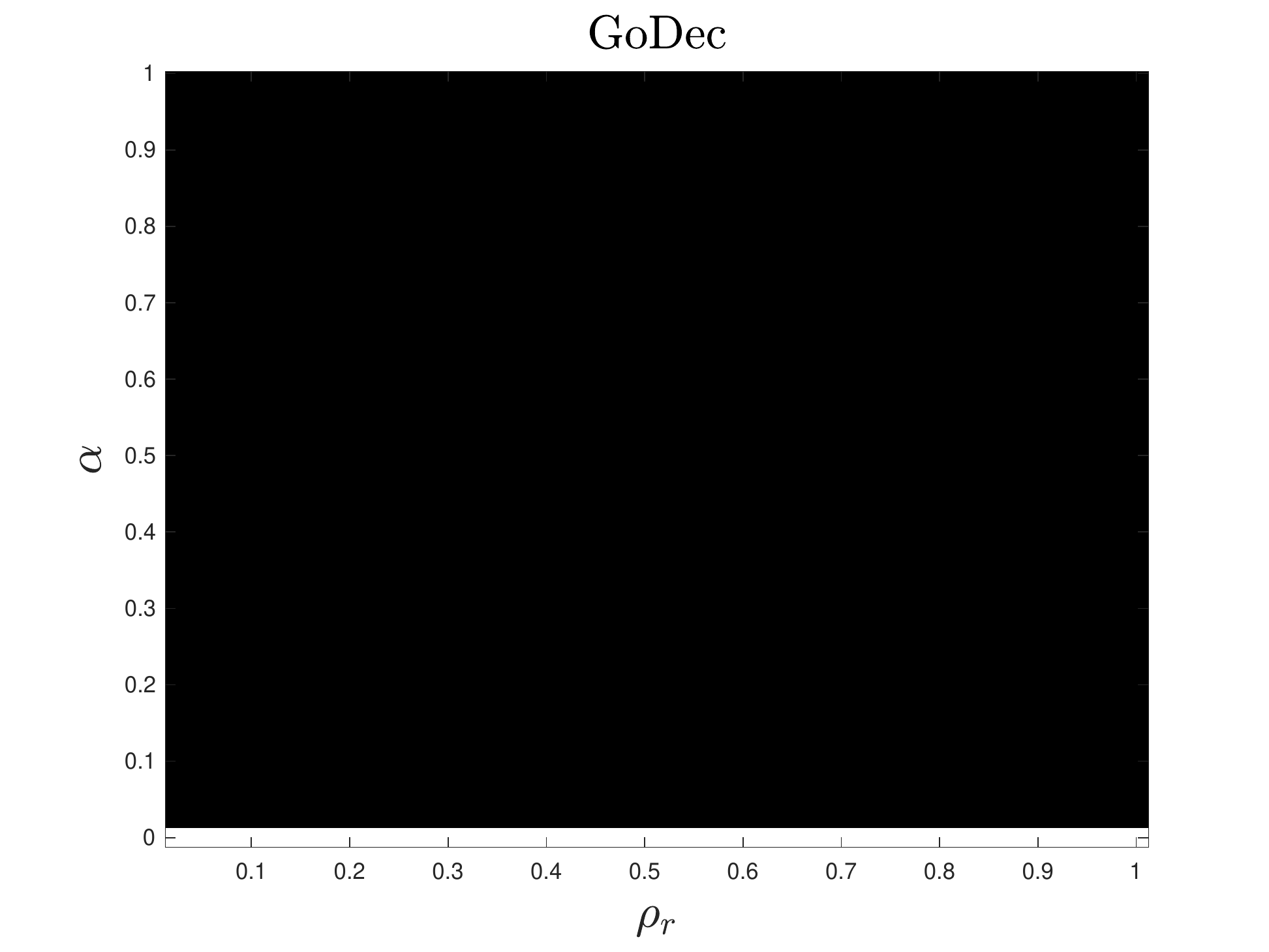}
    \caption{}
    \label{1d}
  \end{subfigure} 
\caption{\small{Phase transition diagram for RPCA F, iEALM, APG, and GoDec with respect to rank and error sparsity. Here, $\rho_r={\rm rank}(L)/m$ and $\alpha$ is the sparsity measure. We have $(\rho_r,\alpha)\in (0.025,1]\times(0,1)$ with $r=5:5:200$ and $\alpha = {\tt linspace}(0,0.99,40)$. We perform 10 runs of each algorithm.}}
    \label{syntheticdata}
\end{figure*}
\begin{figure*}
    \centering
    \begin{subfigure}{0.32\textwidth}
    \includegraphics[width=\textwidth]{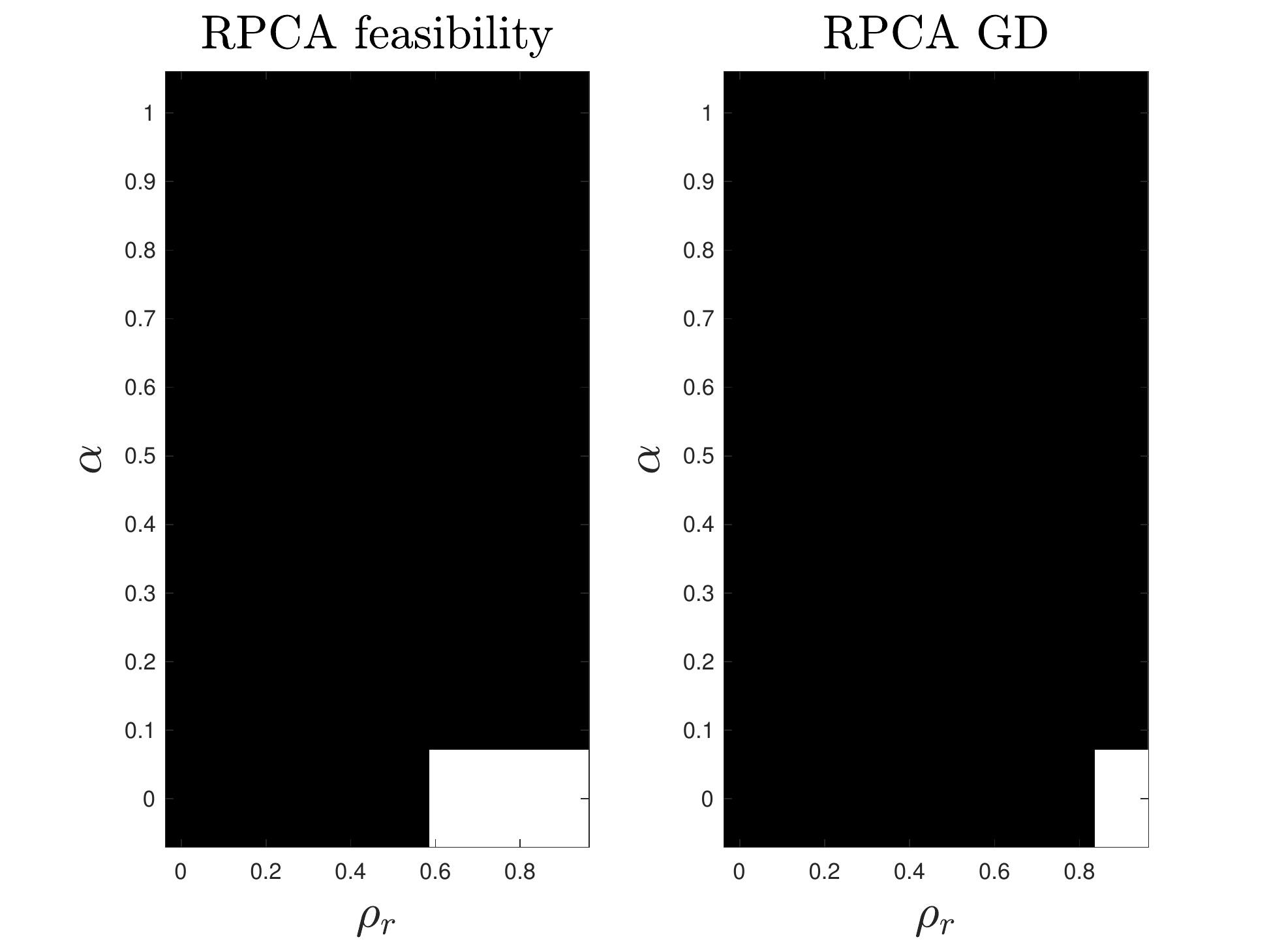}
        \caption{}
      \label{a}
      \end{subfigure}
    \begin{subfigure}{0.32\textwidth}
    \includegraphics[width = \textwidth]{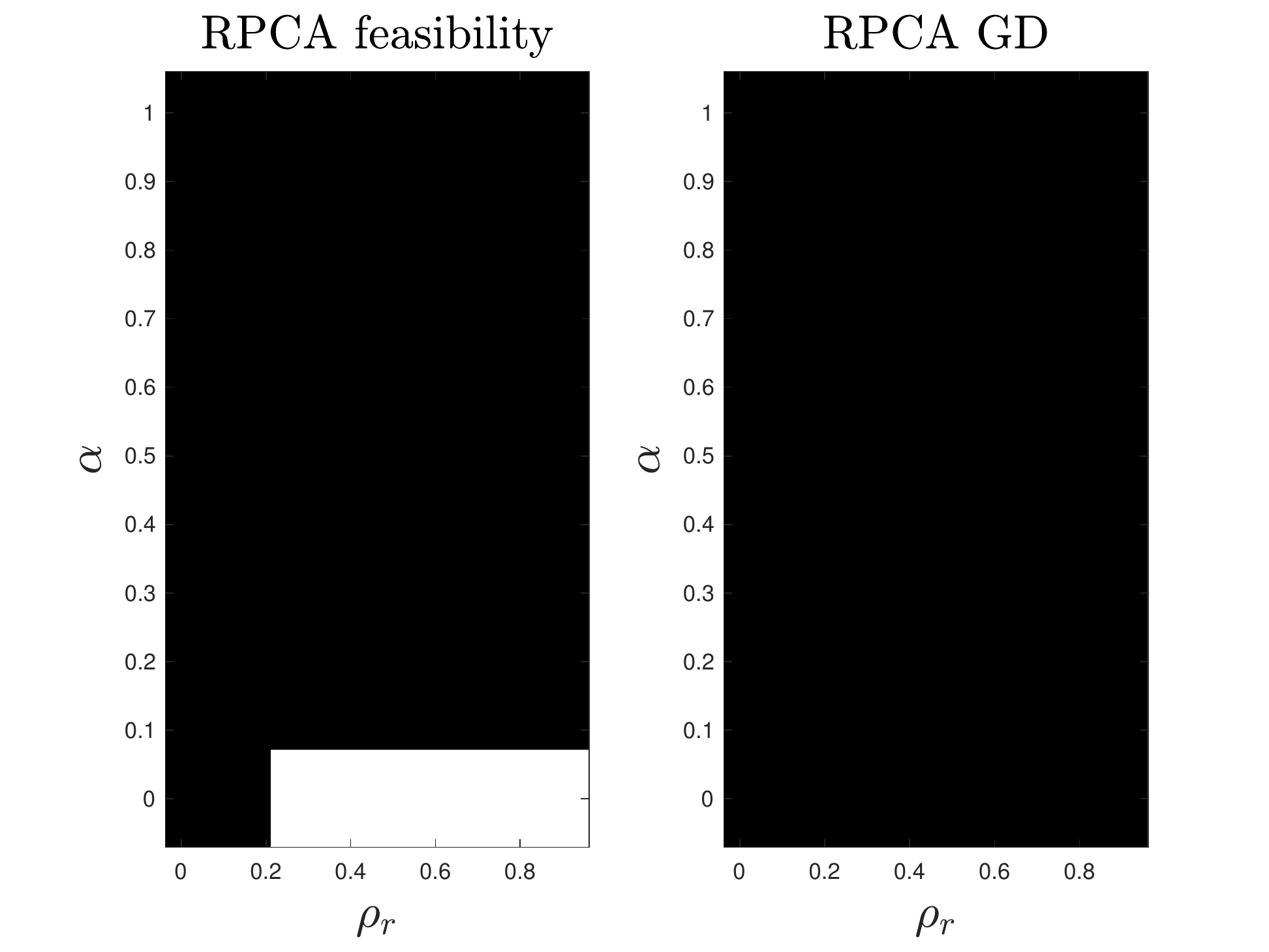}
    \caption{}
    \label{b}
  \end{subfigure} 
   \begin{subfigure}{0.32\textwidth}
    \includegraphics[width = \textwidth]{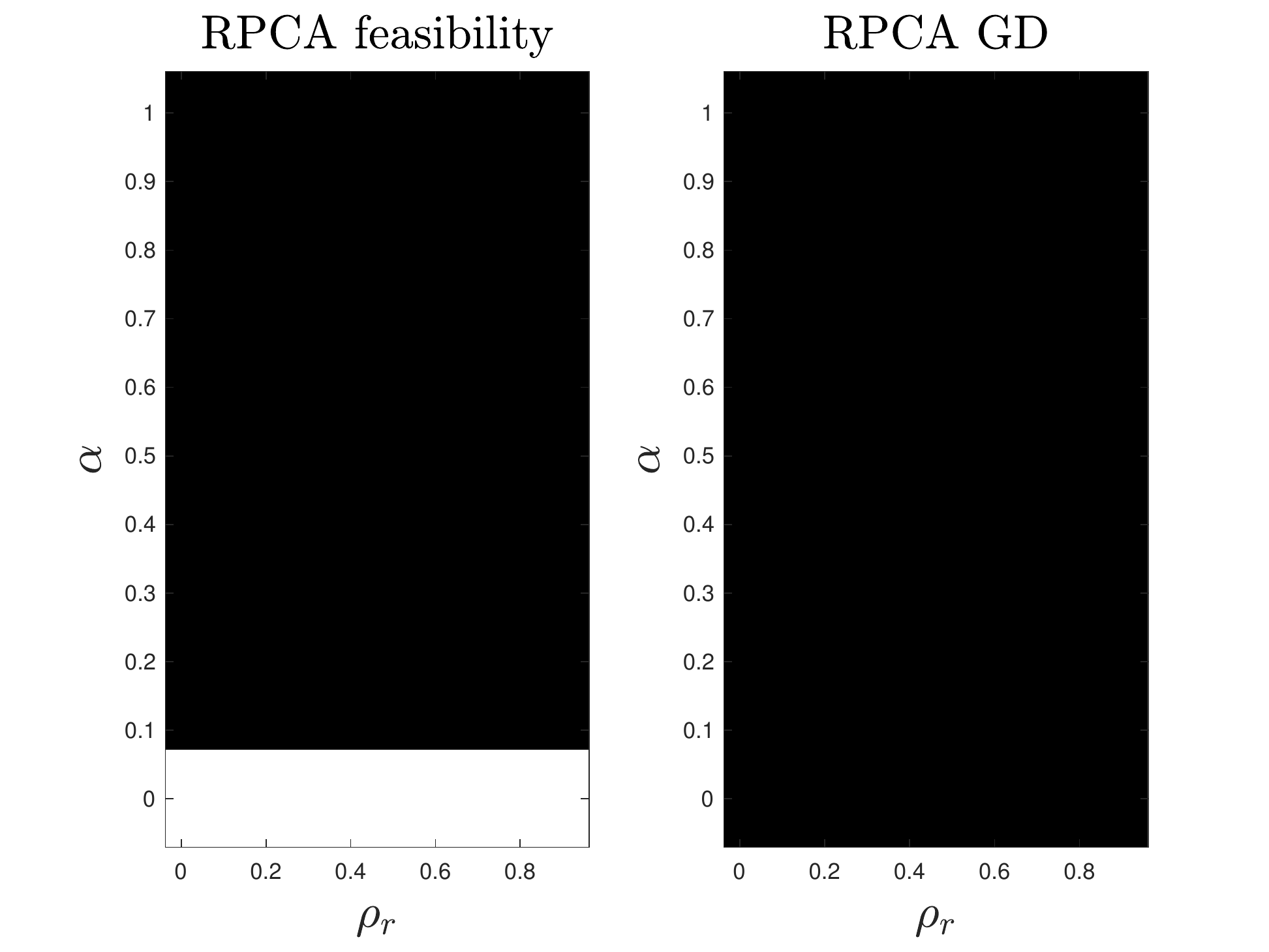}
    \caption{}
    \label{c}
  \end{subfigure} 
 \caption{\small{Phase transition diagram for Relative error for RMC problems: (a) $|\Omega^{C}|=0.5 (m.n)$, (b) $|\Omega^{C}|=0.75 (m.n)$, (c) $|\Omega^{C}|=0.9 (m.n)$. Here, $\rho_r={\rm rank}(L)/m$ and $\alpha$ is the sparsity measure. We have $(\rho_r,\alpha)\in (0.025,1]\times(0,1)$ with $r=5:25:200$ and $\alpha = {\tt linspace}(0,0.99,8)$.}}
    \label{syntheticdata_rel_error}
\end{figure*}

Now we consider the following reformulation of RPCA:
\begin{equation}\label{eq:feas_prob}
\text{Find}\quad M\eqdef [L,S] \in \cX \eqdef \cap_{i=1}^3 \cX_i \neq \emptyset,
\end{equation}
where 
\begin{eqnarray}
\cX_1 &\eqdef& \{M\,|\,L+S=A\} \label{eq:X1_defF}\\
\cX_2 &\eqdef& \{M\,|\,\rank(L)\leq r\} \nonumber\\
\cX_3 &\eqdef& \{M\,|\,\|S_{(i,.)}\|_0\le \alpha n\;\;{\rm and}\;\;\|S_{(.,j)}\|_0\le\alpha m\;\;\nonumber\\&&{\rm for\;all}\;i\in[m], j\in[n].\}  \nonumber
\end{eqnarray}

Clearly, $\cX_1$ is convex, but $\cX_2$ and $\cX_3$ are not. Nevertheless, the algorithm we propose -- alternating Frobenius norm projection on $\cX_i$ performs well to solve RPCA in practice. To validate the robustness of our algorithm, we compare our method to other state-of-the-art RPCA approaches on various practical problems. We also study the local convergence properties and show that despite the non-convex nature of the problem, the algorithms we propose often behave surprisingly well.

\section{The Algorithm}

Denote $\Pi_i$ to be projector onto $\cX_i$. Note that $\Pi_2$ does not include $S$ and projection onto $\Pi_3$ does not include $L$. Consequently, $\Pi_2 \Pi_3$ is a projector onto $\cX_2\cap \cX_3$. Because our goal is to find a point at the intersection of two sets, we shall employ a cyclic projection method (note that randomized method does not make sense). Indeed, steps~\ref{nl_tilde_L} and~\ref{nl_tilde_S} of Algorithm~\ref{rpca_algo} perform projection onto $\cX_1$, step~\ref{nl_Hr} performs projection onto $\cX_2$, and finally, step~\ref{nl_Ta} performs projection onto $\cX_3$. Later in this section we describe the exact implementation and prove correctness the steps mentioned above. 
\begin{algorithm}
\SetAlgoLined
	\SetKwInOut{Input}{Input}
	\SetKwInOut{Output}{Output}
	\SetKwInOut{Init}{Initialize}
	\nl\Input{$A\in\mathbb{R}^{m\times n}$ (the given matrix), rank $r$, sparsity level $\alpha\in(0,1]$}
	\nl\Init {$L_0, S_0$}
	%\BlankLine
	\nl \For{$k=0,1,\dots$}
	{
	     \nl $\tilde{L} = \frac{1}{2}(L_k-S_k+A)$ \label{nl_tilde_L}
	     
		\nl $\tilde{S} = \frac{1}{2}(S_k-L_k+A)$ \label{nl_tilde_S}

		\nl $L_{k+1} = H_r(\tilde{L})$\label{nl_Hr}
		
		\nl $S_{k+1} = \mathcal{T}_{\alpha}(\tilde{S})$\label{nl_Ta}
	}
	%\BlankLine
	\nl \Output{$L_{k+1}, S_{k+1}$}
	\caption{Alternating projection method for RPCA}\label{rpca_algo}
\end{algorithm}

Next, we propose an algorithm to solve the RMC problem \eqref{rmc}. Note that we use the generic hard thresholding operator as in \eqref{hardthresholding} in step~\ref{svd_rmc}  of Algorithm \ref{rmc_algo}. In practice, however, one can perform  cheap SVD. 
\begin{algorithm}
\SetAlgoLined
	\SetKwInOut{Input}{Input}
	\SetKwInOut{Output}{Output}
	\SetKwInOut{Init}{Initialize}
	\nl\Input{$A\in\mathbb{R}^{m\times n}$ (the given matrix), rank $r$, sparsity level $\alpha\in(0,1]$}
	\nl\Init {$L_0, S_0$}
	%\BlankLine
	\nl \For{$k=0,1,\dots$}
	{
	     \nl $\tilde{L} = \frac{1}{2}\cP_{\Omega}(L_k-S_k+A)$ \label{nl_tilde_L2}
	     
	     \nl $\tilde{S} = \frac{1}{2}\cP_{\Omega}(S_k-L_k+A)$ \label{nl_tilde_S2} 
	        % \nl $Z_{k+1}=\text{BKSVD}(\tilde{L}+\cP_{\Omega^c}(L_k),\epsilon,r)$  \label{nl_Z}\;	
	        
		\nl $L_{k+1} = H_r(\tilde{L}+\cP_{\Omega^c}(L_k))$\label{svd_rmc}%Z_{k+1}Z_{k+1}^\top \tilde{L}$\label{nl_ZZL}
		
		\nl $S_{k+1} = \mathcal{T}_{\alpha}(\tilde{S})$\label{nl_Ta}
	}
	%\BlankLine
	\nl \Output{$L_{k+1}, S_{k+1}$}
	\caption{Alternating projection method for RMC}\label{rmc_algo}
\end{algorithm}

Finally, we provide a local convergence of Algorithm \ref{rpca_algo}, which depends on the local geometry of the optimal point, and is mostly linear, which we prove later. 

\paragraph{Projection on the linear constraint\label{sec:lc}.}
The next lemma provides an explicit formula for the projection onto $\cX_1$, which corresponds  to steps~\ref{nl_tilde_L} and~\ref{nl_tilde_S} of Algorithm~\ref{rpca_algo}.

\begin{lemma}\label{sp}
Solutions to
\[
\min_{L,S} \| L-L_0\|^2_{F}+ \| S-S_0\|^2_{F} \quad \text{subject to} \quad L+S=A
\] is $L^*=\frac12 (L_0-S_0+A)$ and $S^*=\frac12 (S_0-L_0+A)$.
\end{lemma}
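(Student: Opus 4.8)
The plan is to eliminate the equality constraint and thereby reduce the problem to an unconstrained, strongly convex quadratic minimization, whose unique solution is obtained by setting the gradient to zero.

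First I would substitute $S = A - L$ into the objective, obtaining the function $\phi(L) \eqdef \|L - L_0\|_F^2 + \|A - L - S_0\|_F^2$ of the single matrix variable $L \in \mathbb{R}^{m\times n}$. Since $L + S = A$ is an affine constraint, minimizing the original objective over feasible pairs $(L,S)$ is an exact reformulation of minimizing $\phi$ over all $L \in \mathbb{R}^{m\times n}$, with $S$ recovered as $S = A - L$; the minimizers of the two problems correspond under this bijection between the feasible set and $\mathbb{R}^{m\times n}$.

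Next, $\phi$ is a sum of squared Frobenius norms of affine functions of $L$, hence a convex quadratic; viewing $\mathbb{R}^{m\times n}$ as $\mathbb{R}^{mn}$, its Hessian equals $4\,\mathbf{I}$, so $\phi$ is $4$-strongly convex and admits a unique global minimizer, characterized by the first-order optimality condition $\nabla\phi(L) = 0$. Computing, $\nabla\phi(L) = 2(L - L_0) - 2(A - L - S_0) = 0$ yields $2L = L_0 + A - S_0$, i.e.\ $L^* = \tfrac12(L_0 - S_0 + A)$. Substituting back, $S^* = A - L^* = \tfrac12(S_0 - L_0 + A)$, which matches the claimed formulas.

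There is essentially no obstacle here; the only points deserving a word of care are (i) observing that the substitution is an exact reformulation, so that a minimizer of $\phi$ produces a minimizer of the constrained problem, and (ii) invoking strong (or strict) convexity to conclude that the unique stationary point is the global minimizer rather than merely a critical point. One could alternatively argue via Lagrange multipliers: introduce a multiplier matrix $\Lambda$ for $L + S = A$, obtain the stationarity conditions $L - L_0 = \Lambda$ and $S - S_0 = \Lambda$, deduce $L - L_0 = S - S_0$, and combine this with $L + S = A$ to solve for $L^*$ and $S^*$; but the elimination route above is the shortest.
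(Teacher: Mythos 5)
Your proof is correct, and it takes a genuinely different route from the paper's. The paper vectorizes the pair $(L,S)$, writes the constraint $L+S=A$ as a linear system $\left(I\otimes\begin{pmatrix}I & I\end{pmatrix}\right)x=a$, and invokes the general formula $x_0-\cA^\top(\cA\cA^\top)^\dagger(\cA x_0-b)$ for the Euclidean projection onto an affine set, simplifying the resulting Kronecker-product expressions to reach the same closed form. You instead eliminate the constraint via $S=A-L$, reduce to an unconstrained strongly convex quadratic $\phi(L)=\|L-L_0\|_F^2+\|A-L-S_0\|_F^2$, and solve $\nabla\phi(L)=0$; your gradient computation and the resulting formulas $L^*=\tfrac12(L_0-S_0+A)$, $S^*=A-L^*=\tfrac12(S_0-L_0+A)$ check out, and your remarks on the exactness of the substitution and on strong convexity guaranteeing that the stationary point is the global minimizer cover the only subtle points. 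Your elimination argument is shorter and more elementary, avoiding pseudoinverses and Kronecker algebra entirely; the paper's projection-formula derivation is heavier but has the side benefit of setting up the machinery that makes the extension to the partially observed case (Lemma~\ref{lem:rmc_linproj}) immediate, since there the paper only needs to observe coordinatewise separability on $\Omega$, an observation your Lagrange-multiplier remark could also support with minor adaptation.
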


We also provide an analogy to Lemma~\ref{sp} for the  RMC problem (steps~\ref{nl_tilde_L2} and~\ref{nl_tilde_S2} of Algorithm \ref{rmc_algo}). 

\begin{lemma}\label{lem:rmc_linproj}
Solutions to
\[
\min_{L,S} \|\cP_{\Omega}(L-L_0)\|_F^2+ \|\cP_{\Omega}(S-S_0)\|_F^2 \quad \text{subject to} \quad \cP_{\Omega}(L+S)=\cP_{\Omega}(A)
\]
are $L^*=\frac12 \cP_{\Omega}(L_0-S_0+A)$ and $S^*=\frac12 \cP_{\Omega}(S_0-L_0+A)$.
\end{lemma}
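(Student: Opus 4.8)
The plan is to reduce this to Lemma~\ref{sp}. The key observation is that both the objective and the equality constraint depend on $L$ and $S$ only through their restrictions $\cP_{\Omega}(L)$ and $\cP_{\Omega}(S)$: the entries of $L$ and $S$ indexed by $\Omega^c$ do not appear in the objective and are not touched by the constraint. Hence those off-support entries may be chosen arbitrarily; setting them to zero (which is exactly what the formulas $L^*=\frac12\cP_{\Omega}(L_0-S_0+A)$ and $S^*=\frac12\cP_{\Omega}(S_0-L_0+A)$ do, since $\cP_{\Omega}$ annihilates them) is one valid choice, and it remains only to pin down the optimal values on $\Omega$.

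Restricted to $\Omega$, I would identify the relevant entries of $L,S,L_0,S_0,A$ with vectors $\ell,\sigma,\ell_0,\sigma_0,a\in\mathbb{R}^{|\Omega|}$ indexed by the pairs $(i,j)\in\Omega$, using $\|\cP_{\Omega}(X-Y)\|_F^2=\sum_{(i,j)\in\Omega}(X_{ij}-Y_{ij})^2=\|x-y\|^2$. The problem then reads $\min_{\ell,\sigma}\|\ell-\ell_0\|^2+\|\sigma-\sigma_0\|^2$ subject to $\ell+\sigma=a$, which is precisely the problem treated in Lemma~\ref{sp}: although that statement is phrased for $m\times n$ matrices, its proof (or a one-line Lagrange-multiplier / complete-the-square argument) goes through verbatim in any finite-dimensional Euclidean space, here $\mathbb{R}^{|\Omega|}$. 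This yields $\ell^*=\frac12(\ell_0-\sigma_0+a)$ and $\sigma^*=\frac12(\sigma_0-\ell_0+a)$, i.e. $\cP_{\Omega}(L^*)=\frac12\cP_{\Omega}(L_0-S_0+A)$ and $\cP_{\Omega}(S^*)=\frac12\cP_{\Omega}(S_0-L_0+A)$. Combining with the zero choice off $\Omega$ gives the claimed expressions.

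I do not expect a real obstacle here; the only point worth a sentence of care is that the minimizer is not unique away from $\Omega$, which is why the lemma asserts the stated matrices \emph{are} solutions rather than \emph{the} solution. If one prefers to avoid passing to a vector in $\mathbb{R}^{|\Omega|}$, the same conclusion follows by mimicking the proof of Lemma~\ref{sp} directly: use the constraint to write $\cP_{\Omega}(S)=\cP_{\Omega}(A-L)$, substitute into the objective to get an unconstrained strictly convex quadratic in $\cP_{\Omega}(L)$, and set its gradient to zero.
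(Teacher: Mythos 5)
Your argument is correct and is essentially the paper's own proof: the authors likewise reduce to Lemma~\ref{sp} by noting the problem is coordinate-wise separable, with the entries outside $\Omega$ unconstrained, so the formula of Lemma~\ref{sp} applies on $\Omega$. If anything, you are more careful than the paper on one small point: the paper says the off-$\Omega$ coordinates ``remain unchanged'' while the stated formulas set them to zero, and your explicit remark that any off-$\Omega$ choice is optimal (hence ``are solutions,'' not ``the solution'') cleanly resolves that discrepancy.
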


\paragraph{Projection on the low rank constraint\label{sec:lr}.}
Consider $L^{(r)}$ to be the projection of $L$ onto the rank $r$ constraint, that is,
\[
L^{(r)}=\arg\min_{L'} \|L'-L\|_F \quad \text{subject to} \quad  {\rm rank}(L')\leq r.
\]

It is known that $L^{(r)}$ can be computed as $r$-SVD of $L$. Fast $r$-SVD solvers has improved greatly in recent years \cite{halko2011finding,musco2015randomized,shamir2015stochastic,allen2016lazysvd}. 
Unfortunately, the most recent approaches \cite{shamir2015stochastic,allen2016lazysvd} were not applied in our setting because they are inefficient; they need to compute $LL^\top$ (or $L^\top L$), which is expensive. Instead, we use block Krylov approach from \cite{musco2015randomized}. For completeness, we quote the algorithm in Appendix. 
Regarding the computational complexity, it was shown that block Krylov SVD outputs $Z$ satisfying $\| L-ZZ^\top  L\|_F \leq (1+\tilde{\epsilon})\| L-L^{(r)}\|_F$ in 
\begin{align*} 
\cO\left(\|L \|_0 \frac{r\log n}{\sqrt{\tilde{\epsilon}}}+\frac{mr^2\log^2 n}{\tilde{\epsilon}} + \frac{r^3\log^3 n}{\tilde{\epsilon}^{3/2}} \right )
\end{align*}
flops. Therefore, projection on the low-rank constraint is not an issue for relatively small rank $r$.

\paragraph{Projection on sparsity constraint\label{sec:spars}.}

Projection onto $\cX_3$ simply keeps the $\alpha$-fraction of the largest elements in absolute value in each row and column and set the rest to zero. One can use a global hard-thresholding operator that considers $\ell_0$ constraint on the entire matrix. Instead, we proposed an operator $\mathcal{T}_{\alpha}(\cdot)$. Indeed, $\mathcal{T}_{\alpha}(\cdot)$ does not perform an explicit Euclidean projection onto $\cX_3$. Instead, it performs a projection onto a certain subset of $\cX_3$ and this is clear from the definition (11) (the subset is defined through support $\Omega_\alpha$). Formally, we define: %the following operator projects onto $\cX_3$:
\begin{eqnarray}\label{eq:T_def}
\mathcal{T}_{\alpha}[S] &\eqdef&\mathcal{P}_{\Omega_{\alpha}}(S)\in\mathbb{R}^{m\times n}: (i,j)\in\Omega_{\alpha}{\rm if}\nonumber\\
&&\;|S_{ij}|\ge|S_{(i,.)}^{(\alpha n)}|\;\;{\rm and}\; |S_{ij}|\ge|S_{(.,j)}^{(\alpha m)}|,\qquad
 \end{eqnarray}
 where $S_{(i,.)}^{(\alpha n)}$ and $S_{(.,j)}^{(\alpha m)}$ denote the $\alpha$ fraction of largest entries of $S$ along the $i^{{\rm th}}$ row and $j^{{\rm th}}$ column, respectively. This allows us to inexpensively compute an approximate projection onto $\cX_3$ which works well in practice. Remarkably, this does not affect our theoretical results (which are formulated for exact projection onto $\cX_3$) in any way.
We note that the operator $\mathcal{T}_{\alpha}(\cdot)$ is similar to that defined in \cite{RPCAgd,zhangpca}. Projection on sparsity constraint~\eqref{eq:T_def} can be implemented in $\cO(nd)$ time: for each row and each column we find $\alpha n$-th largest element (or $\alpha d$) and simultaneously (for other rows/columns) mask the rest. In our experiments, we use fast implementation of $n$-th element computation from \cite{nth_element}. 
\begin{figure*}
    \centering
    \includegraphics[width = \textwidth]{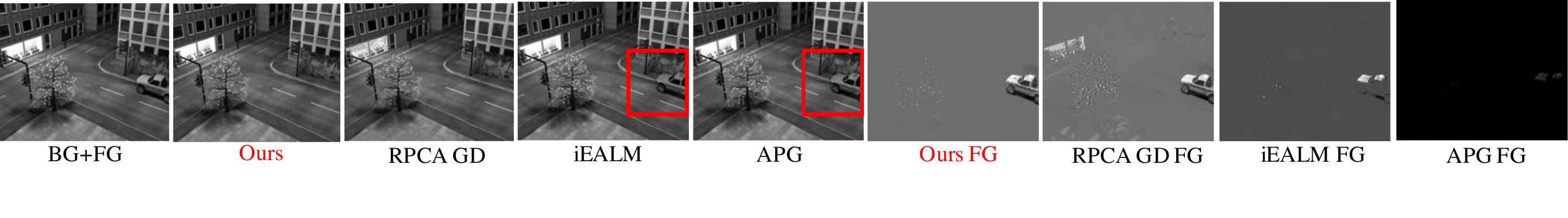}
    \caption{\small{Background and foreground separation on Stuttgart dataset {\tt Basic} video. Except RPCA GD and our method, all other methods fail to remove the static foreground object.}}
    \label{Qual_basic}
\end{figure*}
\begin{figure*}
    \centering
    \includegraphics[width = \textwidth]{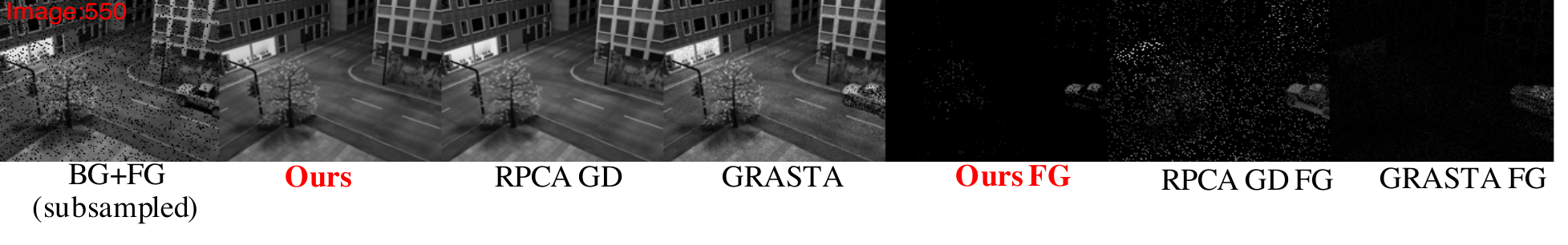}
    \caption{\small{Background and foreground separation on Stuttgart dataset {\tt Basic} video. We used 90\% sample. GRASTA forms a fragmentary background and exhausts around 540 frames to form a stable video. We also note that RPCA GD has more false positives in the foreground.}}
    \label{Qual_basic2}
\end{figure*}

\begin{remark}
One may use the Douglas-Rachford operator splitting method~\cite{artacho2016global} as an alternative to the nonconvex projections. We leave this for future research. 
\end{remark}

\section{Convergence Analysis\label{sec:convergecne}}
In this section, we establish a local convergence analysis of our algorithm by using the basic properties of the alternating projection algorithm~\cite{transversalityMOR2008}. A similar analysis was done for GoDec~\cite{godec}, although Algorithm~\ref{rpca_algo} is vastly different compare to GoDec (we report detailed comparison of these algorithms in Appendix).

Recall that Algorithm~\ref{alg:feas} performs an alternating projection of $[L,S]$ on the sets $\cX_1$ and $\cX_{\cap}\eqdef\cX_2\cap\cX_3$ defined in \eqref{eq:X1_defF}. Before stating the convergence theorem, let us define a (local) angle between sets.

\begin{definition} \label{angle_defn}
Let a point $p$ be in the intersection of set boundaries $\partial K$ and $\partial L$. Define $c(K,L,p)$ be the cosine of an angle between sets $K$ and $L$ at point a $p$ as:
\begin{eqnarray*}%\label{angle_defn}
c(K, L, p) \eqdef \cos \angle (\partial K^\top|_p, \partial L^\top|_p),
\end{eqnarray*}
where $\partial K^\top|_p$ denotes a tangent space of set boundary $\partial K$ at $p$ and  $\angle$ returns the angle between subspaces given as arguments. As a consequence, we have $0\le c(K, L, p) \le 1.$
\end{definition}
Let us also define $d_{\cX_1 \cap \cX_{\cap}}(x)$ to be an Euclidean distance of a point $x$ to the set $\cX_1 \cap \cX_{\cap}$.

\begin{theorem}\label{conv_l}\cite{transversalityMOR2008} Suppose that $[\bar{L}\;\;\bar{S}]\in \partial (\cX_1 \cap \cX_{\cap})$. Given any constant $c\in \R$ such that 
$c>c(\cX_1 , \cX_{\cap}, [\bar{L}\;\;\bar{S}])$ there is a starting point $[L_0\;\;S_0]$ close to $[\bar{L}\;\;\bar{S}]$ such that the iterates $L_k, S_k$ of Algorithm~\ref{rpca_algo} satisfy
\[
d_{\cX_1 \cap \cX_{\cap}}([L_k\;\;S_k]) < c^k d_{\cX_1 \cap \cX_{\cap}}([L_0\;\;S_0]).
\]
\end{theorem}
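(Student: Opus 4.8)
The plan is to observe that Algorithm~\ref{rpca_algo} is exactly the two-set alternating projection scheme of Algorithm~\ref{alg:feas} run on $\cX_1$ and $\cX_\cap \eqdef \cX_2 \cap \cX_3$, and then to quote the local linear convergence result of~\cite{transversalityMOR2008} for that scheme. So the first step is to make the identification precise. One pass of the loop sends $[L_k\;\;S_k]$ to $\tilde L = \tfrac12(L_k - S_k + A)$ and $\tilde S = \tfrac12(S_k - L_k + A)$, which by Lemma~\ref{sp} is exactly the Euclidean projection $\Pi_1([L_k\;\;S_k])$ onto the affine set $\cX_1$. It then replaces $\tilde L$ by $H_r(\tilde L)$ and $\tilde S$ by $\mathcal{T}_\alpha(\tilde S)$. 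Since the first operation modifies only the $L$-block --- the only block that $\cX_2$ constrains --- and the second modifies only the $S$-block --- the only block that $\cX_3$ constrains --- and since the squared Frobenius objective separates across the two blocks, the resulting pair $[H_r(\tilde L)\;\;\mathcal{T}_\alpha(\tilde S)]$ is precisely the Euclidean projection $\Pi_{\cX_\cap}([\tilde L\;\;\tilde S])$; here one uses Eckart--Young to identify $H_r(\cdot)$ with the metric projection onto $\cX_2$, and reads $\mathcal{T}_\alpha(\cdot)$ as the exact projection onto $\cX_3$, as the statement stipulates. Hence $[L_{k+1}\;\;S_{k+1}] = \Pi_{\cX_\cap}\!\left(\Pi_1([L_k\;\;S_k])\right)$, i.e. the iterates are the alternating projections between $\cX_1$ and $\cX_\cap$.

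Next I would check that the hypotheses of~\cite{transversalityMOR2008} hold at the reference point $[\bar L\;\;\bar S]$. The set $\cX_1$ is an affine subspace, hence a smooth manifold (and convex) everywhere. For $\cX_\cap$ one argues that it is, in a neighbourhood of $[\bar L\;\;\bar S]$, a smooth manifold: the variety of matrices of rank at most $r$ is a smooth manifold near any matrix of rank exactly $r$; the $\alpha$-sparsity set is a finite union of coordinate subspaces and agrees with a single such subspace near any point in the relative interior of a maximal-support face; and $\cX_\cap$ is the block product of these two sets, hence again a manifold near $[\bar L\;\;\bar S]$ under these nondegeneracy conditions. With both sets locally smooth, Definition~\ref{angle_defn} produces a well-defined cosine $c(\cX_1,\cX_\cap,[\bar L\;\;\bar S]) \in [0,1]$ between their tangent spaces at $[\bar L\;\;\bar S]$, and the cited theorem then gives, for every $c$ strictly above that cosine, a neighbourhood of $[\bar L\;\;\bar S]$ within which the alternating-projection sequence of the previous paragraph remains and converges to a point of $\cX_1 \cap \cX_\cap$ with $d_{\cX_1\cap\cX_\cap}([L_k\;\;S_k]) < c^k d_{\cX_1\cap\cX_\cap}([L_0\;\;S_0])$. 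This is the claim. (The bound is informative precisely when that cosine is strictly below $1$, i.e. when $\cX_1$ and $\cX_\cap$ are not tangent at $[\bar L\;\;\bar S]$.)

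I expect the genuine work to be the local-regularity step, not the convergence estimate, which is quoted wholesale. Concretely, one must exclude degeneracies at $[\bar L\;\;\bar S]$: $\bar L$ should have rank exactly $r$ (otherwise $\cX_2$ fails to be a manifold there and $H_r$ is set-valued, and one would instead have to invoke the superregularity framework of~\cite{LLM2009}), and $\bar S$ should have a stable support pattern with no ties at the thresholding level, so that $\mathcal{T}_\alpha$ --- equivalently $\Pi_3$ --- is single-valued near the reference point; one also has to confirm that taking the block product of the two manifolds does not spoil smoothness. A secondary, purely bookkeeping, point is to line up the abstract angle of Definition~\ref{angle_defn} (and the indexing of ``one iteration'') with the corresponding quantities in~\cite{transversalityMOR2008}, so that the contraction factor is reported consistently. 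Once these are in place, nothing further is needed.
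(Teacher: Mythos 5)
Your proposal is correct and follows essentially the same route as the paper, which does not prove Theorem~\ref{conv_l} independently but identifies Algorithm~\ref{rpca_algo} as two-set alternating projection between $\cX_1$ and $\cX_\cap=\cX_2\cap\cX_3$ (steps 4--5 via Lemma~\ref{sp}, step 6 via Eckart--Young, step 7 read as the exact projection onto $\cX_3$) and then quotes the local linear convergence result of~\cite{transversalityMOR2008}. Your additional remarks on the nondegeneracy needed at $[\bar L\;\;\bar S]$ (rank exactly $r$, maximal/stable support) correspond to the assumptions the paper makes when writing down the tangent spaces of $\partial\cX_1$ and $\partial\cX_\cap$.
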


\begin{remark}\label{no_conv}
From Theorem \ref{conv_l} it is clear that the smaller $c(\cX_1 , \cX_{\cap}, [\bar{L}\;\;\bar{S}])$ produces a faster convergence, while $c(\cX_1 , \cX_{\cap}, [\bar{L}\;\;\bar{S}])=1$ can stop the convergence, as described in Example~2 in Appendix. 
\end{remark}

\begin{remark}\label{RMC}
Theorem~\ref{conv_l} is stated for Algorithm~\ref{rpca_algo}, however, one can easily obtain an equivalent result for Algorithm~3 as well. 
\end{remark}

\begin{remark}
Considering the nuclear norm relaxation instead of low rank constraint and $\ell_1$ norm relaxation instead of sparsity constraint, the set $\cX_{\cap}$ becomes convex, and thus the whole problem becomes convex as well. Therefore, Algorithms~\ref{rpca_algo} and~3 converge globally. 
\end{remark}

For completeness, we also derive the exact form of tangent spaces of $\cX_1, \cX_\cap$ mentioned in Definition~\ref{angle_defn}. 
Suppose that $\rank(\bar{L})=r$, and $\bar{S}$ is a matrix of maximal sparsity, that is, $\bar{S} \in \cX_3$ while $\bar{S}+S' \not\in \cX_3$ s.t. $\|S'\|=1$ and $\|\bar{S}+S' \|_0=\|\bar{S}\|_0+1$.
The tangent spaces of $\partial \cX_1$ and $\partial \cX_{\cap}$ at point $[\bar{L}\;\;\bar{S}]$ are given by
\begin{eqnarray*}\label{normal_manifold1}
\partial \cX_1^\top|_{[\bar{L}\;\;\bar{S}]} &=& \cX_1 \\
\partial \cX_\cap^\top|_{[\bar{L}\;\;\bar{S}]} &=& \partial \cX_2^\top|_{\bar{L}} \times \partial \cX_3^\top|_{\bar{S}}, \\
\end{eqnarray*}
where
\begin{eqnarray*}
\partial \cX_2^\top|_{\bar{L}} &=& \{\tilde{L}|\, \tilde{L} =\bar{L}
+ \tilde{U}\tilde{\Sigma} \tilde{V}^\top, \tilde{U}^\top \tilde{U} = \tilde{V}^\top \tilde{V} = I, \\ &&\bar{U}^\top \tilde{U}=0, \bar{V}^\top \tilde{V}=0,  \tilde{\Sigma} =\diag{\tilde{\Sigma}}\} \\
\partial \cX_3^\top|_{\bar{S}} &=& \{
\tilde{S}|\, \tilde{S} = \bar{S}+S', S'_{i,j} = 0 \, \, \forall (i,j)\,\, \mbox{s.t.} \,\, \bar{S}_{i,j}=0
\}.
\end{eqnarray*}

Later in Appendix, we also empirically show that:
\begin{enumerate}
\item Convergence speed is not significantly influenced by starting point.
\item Convergence is usually fastest for small true sparsity level $\alpha$ and small true rank $r$, which is the situation in many practical applications. 
\item Convergence of Algorithm~3 is slower for medium sized number of observable entries, that is, when $|\Omega|\approx0.5(m\cdot n)$, and faster for smaller and bigger sizes.
\item If sparsity and rank levels ($\alpha$ and $r$) are set to be smaller than their true values at the optimum incorrectly, Algorithm~\ref{rpca_algo} does not converge (as in this case, $\cap \cX_i$ might not exist). Moreover, the  performance of the algorithm is  sensitive to the choice of $r$, and this is particularly so if we underestimate the true value (see Figure~8 in Appendix). 
\end{enumerate}

Finally, in Appendix we give two examples for the convex version of the problem~\eqref{eq:feas_prob} with the same block structure; in them, the alternating projection algorithm either converges extremely fast or does not even converge linearly. 
\section{Numerical experiments}\label{sec:experiments}
To explore the strengths and flexibility of our feasibility approach, we performed numerical experiments. First, we work with synthetic data and subsequently apply our method to four real-world problems. 

\subsection{Results on synthetic data}\label{sec:synthetic}
To perform our numerical simulations, first, we construct the test matrix $A$. We follow the seminal work of Wright et al.~\cite{APG} to design our experiment. To this end, we construct $A$ as a low-rank matrix, $L$, corrupted by sparse large noise, $S$, with arbitrary large entries such that $A = L+S$.  We generate $L$ as a product of two independent full-rank matrices of size $m\times r$ whose elements are independent and identically distributed (i.i.d.) $\cN(0, 1)$ random variables and ${\rm rank}(L) = r$. We generate $S$ as a noise matrix whose elements are sparsely supported by using the operator \eqref{eq:T_def} and lie in the range $[-500,500]$. We fix $m=200$ and define $\rho_r={\rm rank}(L)/m$ where ${\rm rank}(L)$ varies. We choose the sparsity level $\alpha\in(0,1)$. 
For each pair of $(\rho_r,\alpha)$ we apply iEALM, APG, and our algorithm to recover the pair $(\hat{L},\hat{S})$ such that $\hat{A}=\hat{L}+\hat{S}$ be the recovered matrix. For both APG and iEALM, we set $\lambda=1/\sqrt{m}$ and for iEALM we use $\mu=1.25/\|A\|_2$ and $\rho=1.5$, where $\|A\|_2$ is the spectral norm (maximum singular value) of $A$. If the recovered matrix pair $(\hat{L},\hat{S})$ satisfies the relative error $\frac{\|L-\hat{L}\|_F+\|S-\hat{S}\|_F}{\|{A}\|_F}<0.01$ then we consider the construction is viable. In Figure \ref{syntheticdata} we show the fraction of perfect recovery, where white denotes {\it success} and black denotes {\it failure}. As mentioned in \cite{APG}, the success of APG is approximately below the line $\rho_r+\alpha=0.35.$ However, the success of iEALM is not as good as APG. To conclude, when the sparsity level $\alpha$ is low, our feasibility approach can provide a feasible reconstruction for any $\rho_r$. We note that for low sparsity level, the RPCA algorithms can only provide a feasible reconstruction for $\rho_r\le0.4$. On the other hand, for low $\rho_r$, our feasibility approach can tolerate sparsity level approximately up to 63\%. In contrast, RPCA algorithms can tolerate sparsity up to 50\% for low $\rho_r$. Therefore, taken together, we can argue that our method can be proved useful to solve real-world problems when one wants to recover a moderately sparse matrix having {\it any} inherent low-rank structure present in it or in case of a low-rank matrix corrupted with dense outliers of arbitrary large magnitudes. 
\begin{figure}
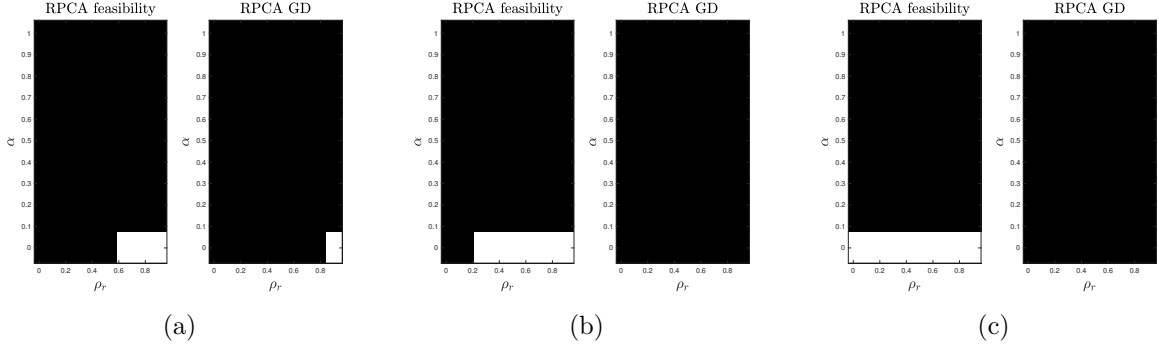

    \centering
    \begin{subfigure}{0.32\textwidth}
    \includegraphics[width=\textwidth]{Results/rel_err_subsample50-eps-converted-to.pdf}
        \caption{}
      \label{a}
      \end{subfigure}
    \begin{subfigure}{0.32\textwidth}
    \includegraphics[width = \textwidth]{Results/rel_err_subsample75-eps-converted-to.pdf}
    \caption{}
    \label{b}
  \end{subfigure} 
   \begin{subfigure}{0.32\textwidth}
    \includegraphics[width = \textwidth]{Results/rel_err_subsample90-eps-converted-to.pdf}
    \caption{}
    \label{c}
  \end{subfigure} 
  \vspace{-0.1in}
 \caption{\small{Phase transition diagram for Relative error for RMC problems: (a) $|\Omega^{C}|=0.5 (m.n)$, (b) $|\Omega^{C}|=0.75 (m.n)$, (c) $|\Omega^{C}|=0.9 (m.n)$. Here, $\rho_r={\rm rank}(L)/m$ and $\alpha$ is the sparsity measure. We have $(\rho_r,\alpha)\in (0.025,1]\times(0,1)$ with $r=5:25:200$ and $\alpha = {\tt linspace}(0,0.99,8)$.}}
    \label{syntheticdata_rel_error}
\end{figure}
\begin{figure}
    \centering
    \includegraphics[width = \textwidth]{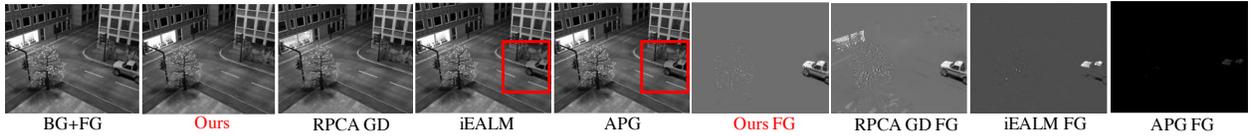}
    \caption{\small{Background and foreground separation on Stuttgart dataset {\tt Basic} video. Except RPCA GD and our method, all other methods fail to remove the static foreground object.}}
    \label{Qual_basic}
\end{figure}

\subsection{Results on synthetic data: RMC problem}\label{sec:rmc}
For experiments in this section, we used a similar technique as in Section \ref{sec:synthetic} to generate the test matrix $A$. We fixed $m=200$  and denote $\rho_r$ and $\alpha$ same as in Section \ref{sec:synthetic}. We randomly select the set of observable entries in $A$. We compare our method against the RPCA gradient descent (RPCA GD) by Yi et al.\ \cite{RPCAgd} and use the relative error for the low-rank component recovered as performance measure, that is, if $\|L-\hat{L}\|_F/\|L\|_F<\tilde{\epsilon}$ then we consider the construction is viable. Note that $L$ is the original low-rank matrix and $\hat{L}$ is the low-rank matrix recovered. For $|\Omega^{C}|=0.5 (m.n), 0.75 (m.n),$ and $0.9 (m.n)$ we consider $\tilde{\epsilon}=0.2, 0.6$, and $1$, respectively. In Figure \ref{syntheticdata_rel_error}, for the phase transition diagram white denotes {\it success} and black denotes {\it failure}. From Figure \ref{syntheticdata_rel_error} we observe that irrespective of the cardinalities of the set of the observed entries our feasibility approach outperforms RPCA GD. However, as the cardinality of the set of the observable entries, that is, $|\Omega|$ decreases, the performance of our feasibility approach gets better (see Figure \ref{c}). %We provide comparison of these two methods with respect to the root mean square error (RMSE) in the {Appendix}. 

\begin{figure}
    \centering
    \begin{subfigure}{0.49\textwidth}
    \includegraphics[width=\textwidth]{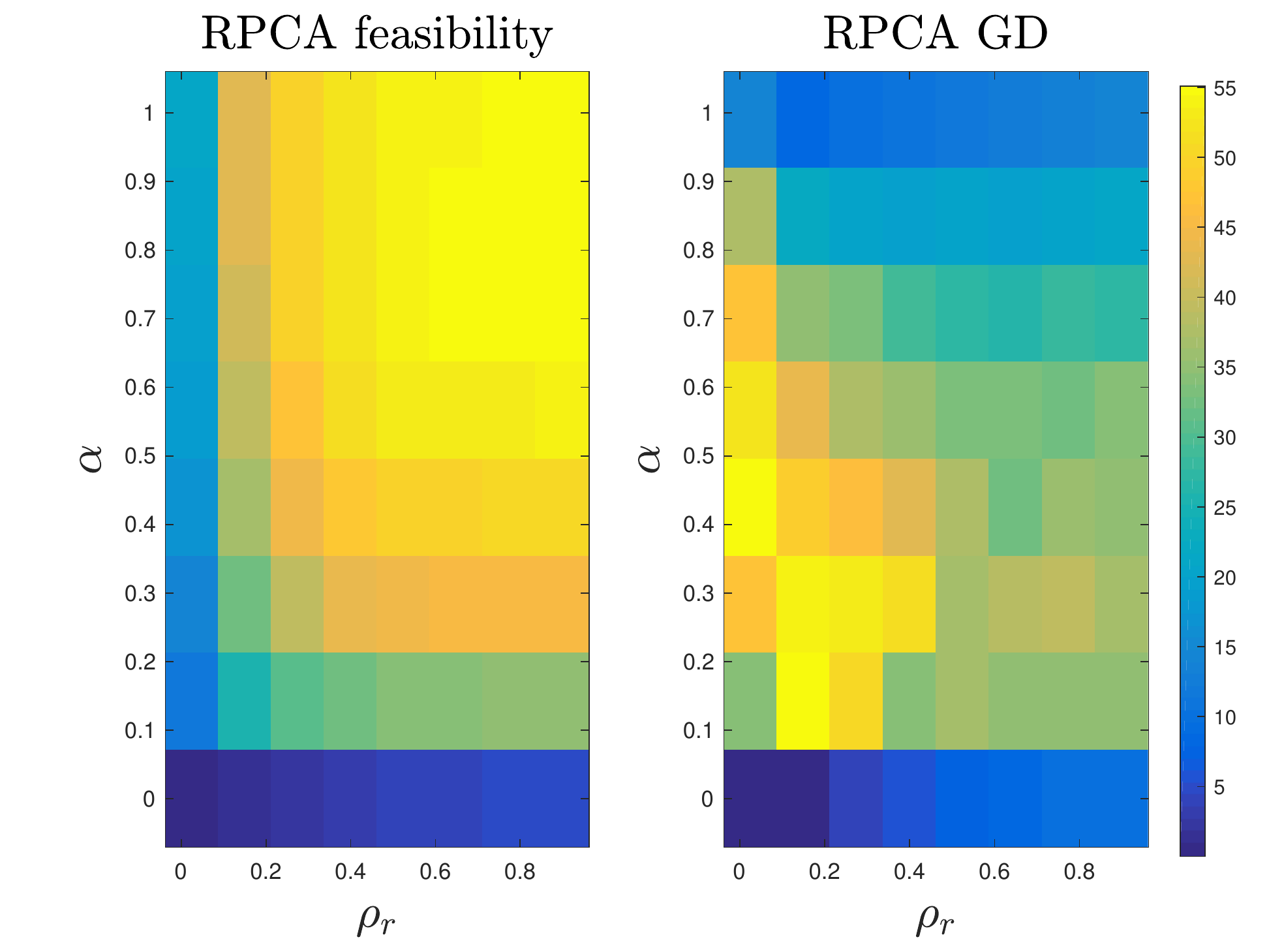}
        \caption{}
      \label{1a}
      \end{subfigure}
    \begin{subfigure}{0.49\textwidth}
    \includegraphics[width = \textwidth]{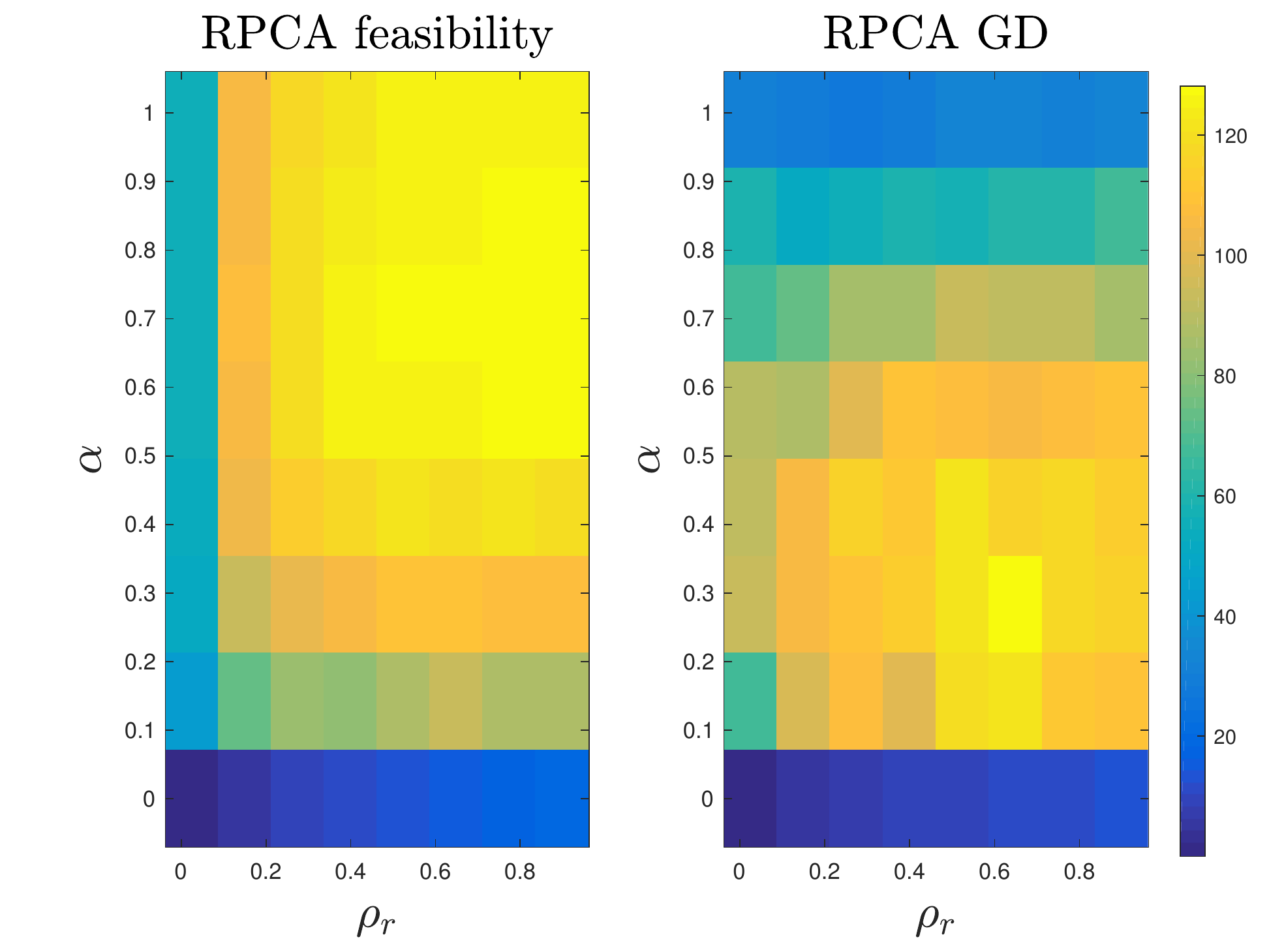}
    \caption{}
    \label{1b}
  \end{subfigure} 
   \begin{subfigure}{0.49\textwidth}
    \includegraphics[width = \textwidth]{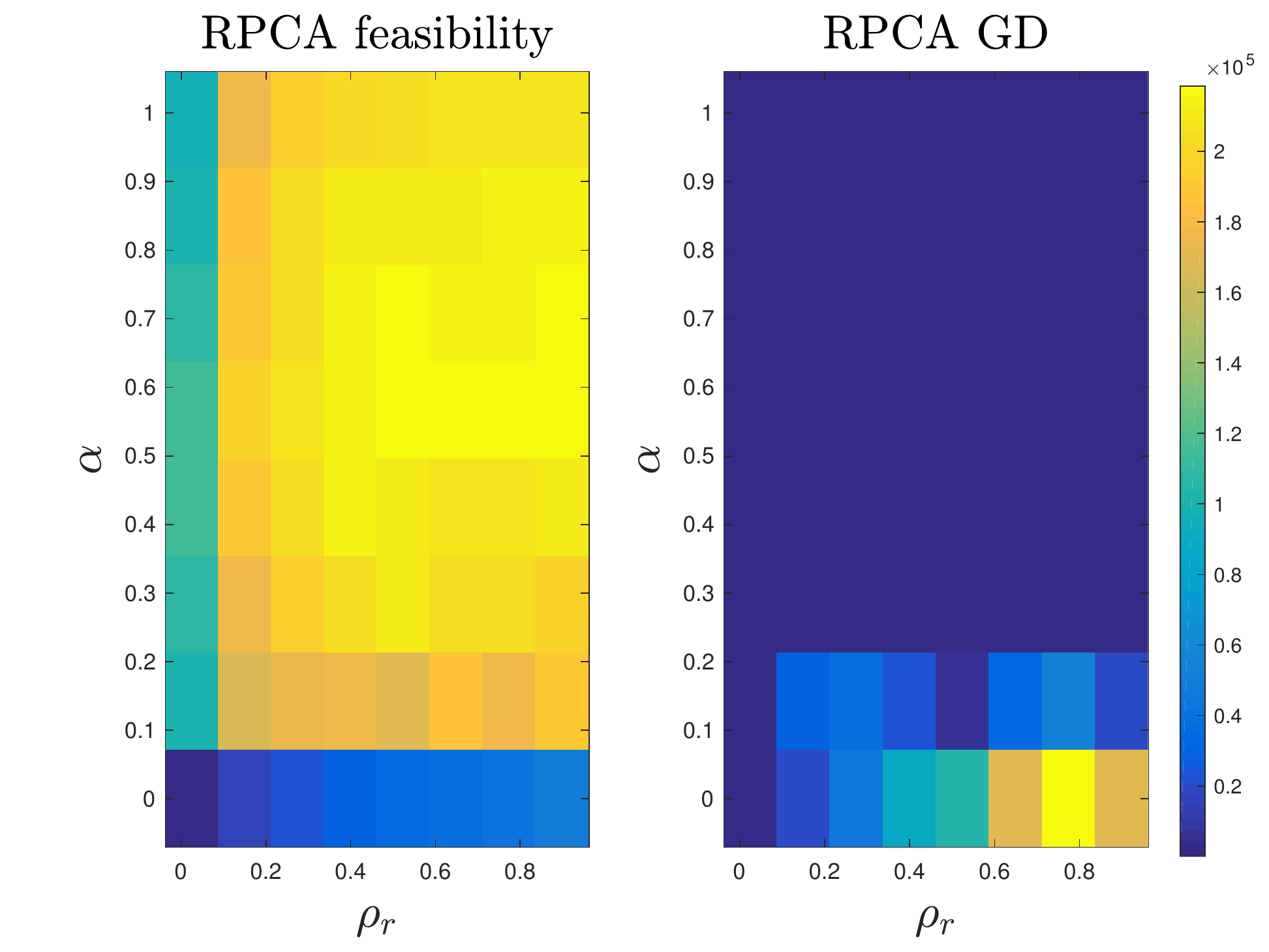}
    \caption{}
    \label{1c}
  \end{subfigure} 
  \begin{subfigure}{0.49\textwidth}
    \includegraphics[width = \textwidth]{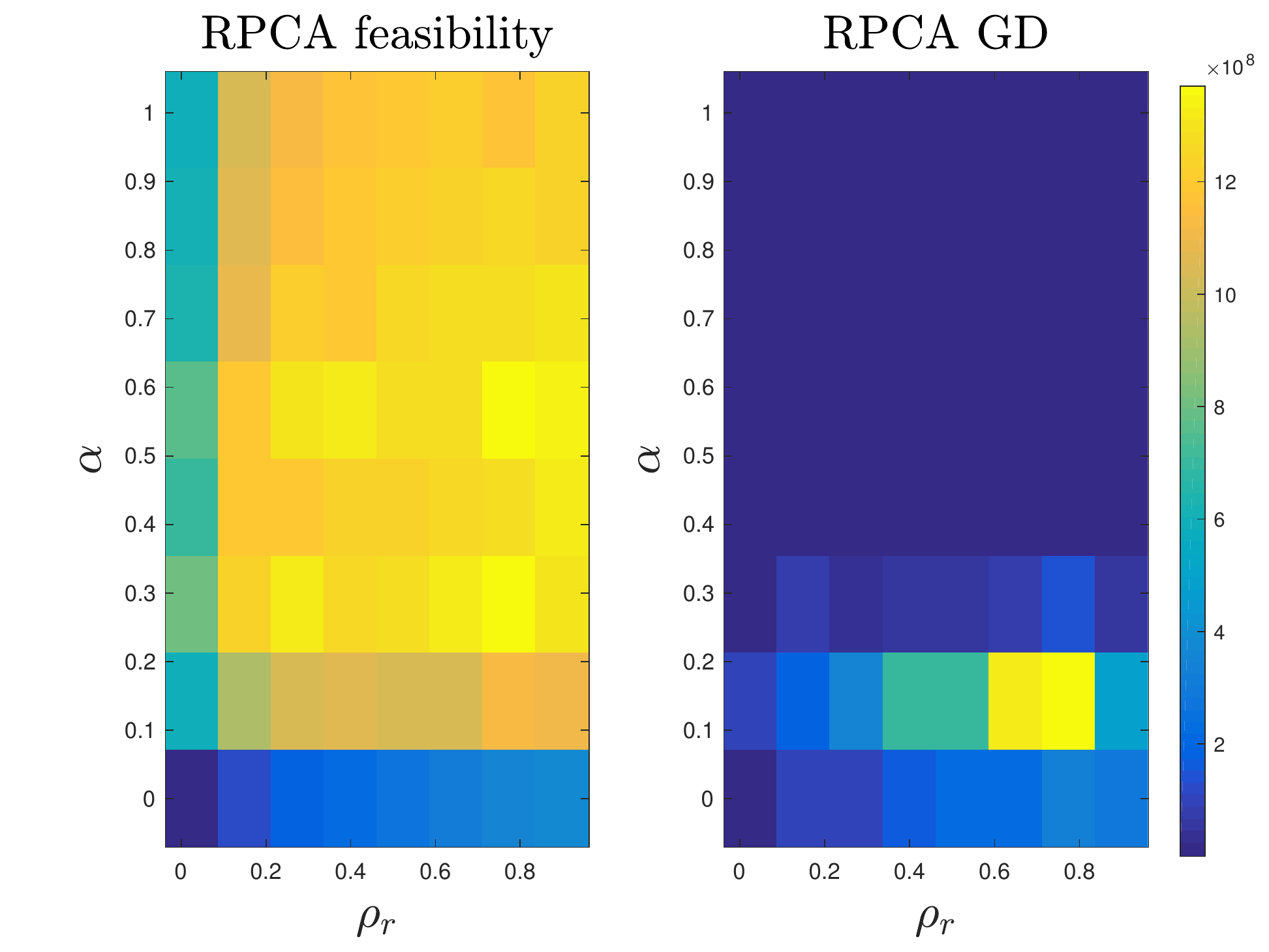}
    \caption{}
    \label{1d}
  \end{subfigure} 
  \vspace{-0.1in}
 \caption{\small{RMSE for RMC problems: (a) $|\Omega^{C}|=0.5 (m.n)$, (b) $|\Omega^{C}|=0.75 (m.n)$, (c) $|\Omega^{C}|=0.9 (m.n)$, (d) $|\Omega^{C}|=0.95 (m.n)$. Here, $\rho_r={\rm rank}(L)/m$ and $\alpha$ is the sparsity measure. We have $(\rho_r,\alpha)\in (0.025,1]\times(0,1)$ with $r=5:25:200$ and $\alpha = {\tt linspace}(0,0.99,8)$.}}
    \label{syntheticdata_rmse}
\end{figure}
\begin{figure}
    \centering
    \includegraphics[width = \textwidth]{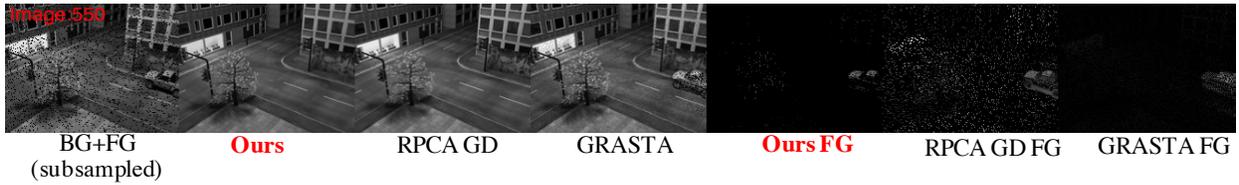}
    \caption{\small{Background and foreground separation on Stuttgart dataset {\tt Basic} video. We used 90\% sample. GRASTA forms a fragmentary background and exhausts around 540 frames to form a stable video. We also note that RPCA GD has more false positives in the foreground.}}
    \label{Qual_basic2}
\end{figure}
Next, we use the root mean square error (RMSE), that is, $\|L-\hat{L}\|_F/\sqrt{mn}$ as a performance measure for these set of results. Note that $L$ is the original low-rank matrix and $\hat{L}$ is the low-rank matrix recovered. From Figure \ref{syntheticdata_rmse} we observe that when the cardinality of the set of the observable entries $\Omega$ is 50\% and 75\% of $[m]\times [n]$, respectively, RPCA GD has slightly better RMSE than our method as $\rho_r$ increases. However, as the cardinality of the set of the observable entries, that is, $|\Omega|$ decreases, we outperform RPCA GD (see Figure \ref{1c}-\ref{1d}). Therefore, we further validate that for RMC problems, when $|\Omega|$ is small the feasibility approach is better to recover a low-rank matrix. 

\subsection{Applications to real-world problem}
In this section we demonstrate the robustness of our feasibility approach to solve four classic real-world problems: i) background and foreground estimation from fully and partially observed data, ii) shadow removal from face images captured under varying illumination and camera position, iii) inlier subspace detection, iv) processing astronomical data.

\subsubsection{Background and foreground estimation from fully observed data}\label{sec:bg}
In this section, we show our results on the background estimation problem. In the past decade, one of the most prevalent approaches used to solve background estimation problem is to treat it as a low-rank and sparse matrix decomposition problem \cite{Bouwmans2016,Sobral20144,duttaligongshah,RPCA-BL,prmf,grasta,gosus,dutta_thesis,inWLR,duttalirichtarik_modeling,duttali_bg}. Given a sequence of $n$ video frames with each frame mapped into a vector ${a}_i\in {\mathbb R}^m$, $i=1,2,...,n$, the data matrix $A\in {\mathbb R}^{m\times n}$ in the collection of all the frame vectors is expected to be split into $L+S$. By using the above idea, RPCA \cite{candeslimawright,LinChenMa,APG} was introduced by considering the background frames, $L$, having a low-rank structure and the foreground, $S$, sparse. The convex relaxation of the problem is  \eqref{rpca}. 
\begin{figure}
    \centering
    \begin{subfigure}{0.49\textwidth}
    \includegraphics[width=\textwidth]{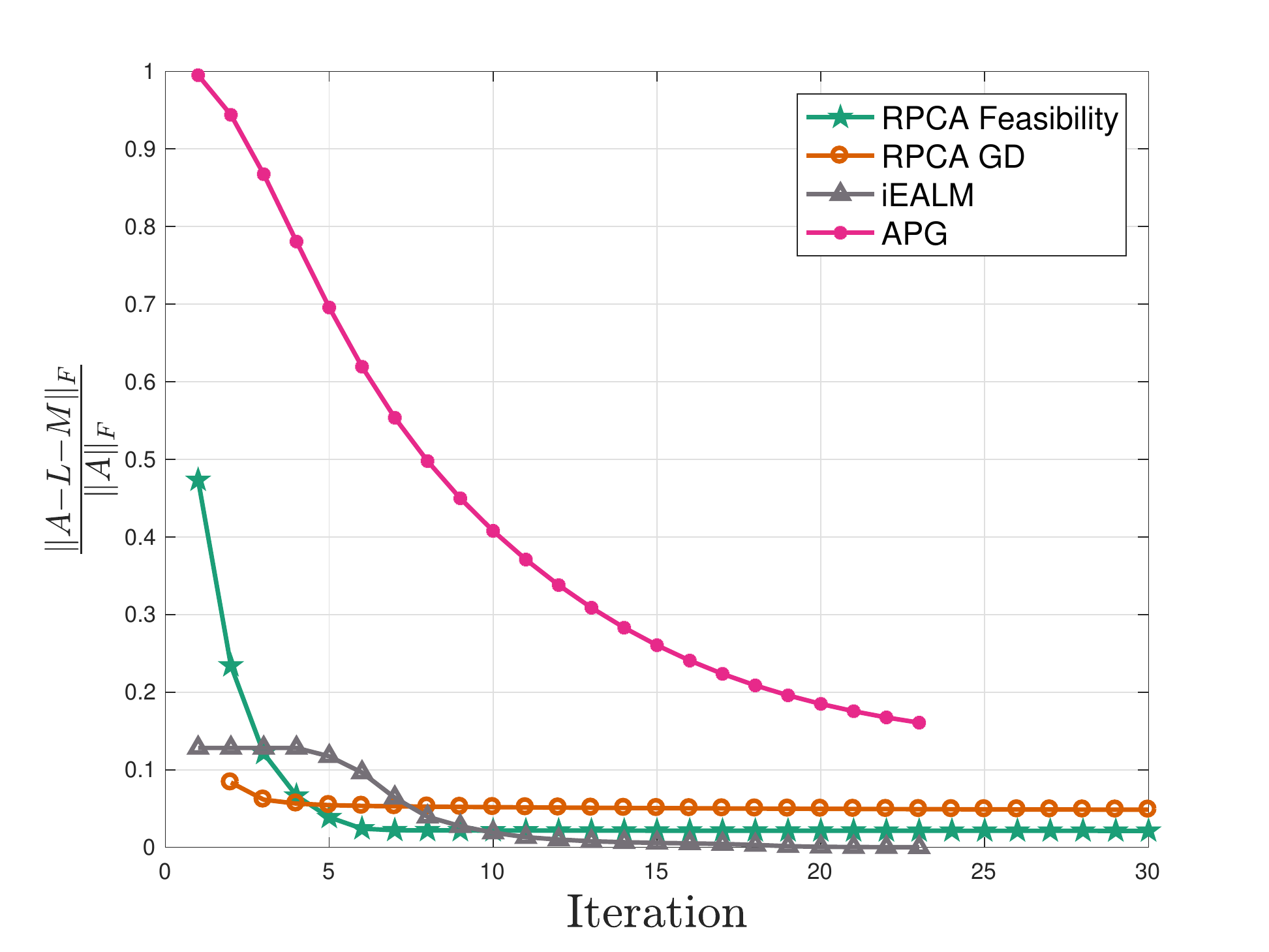}
        \caption{}
      \label{Rel_bg}
      \end{subfigure}
    \begin{subfigure}{0.49\textwidth}
    \includegraphics[width = \textwidth]{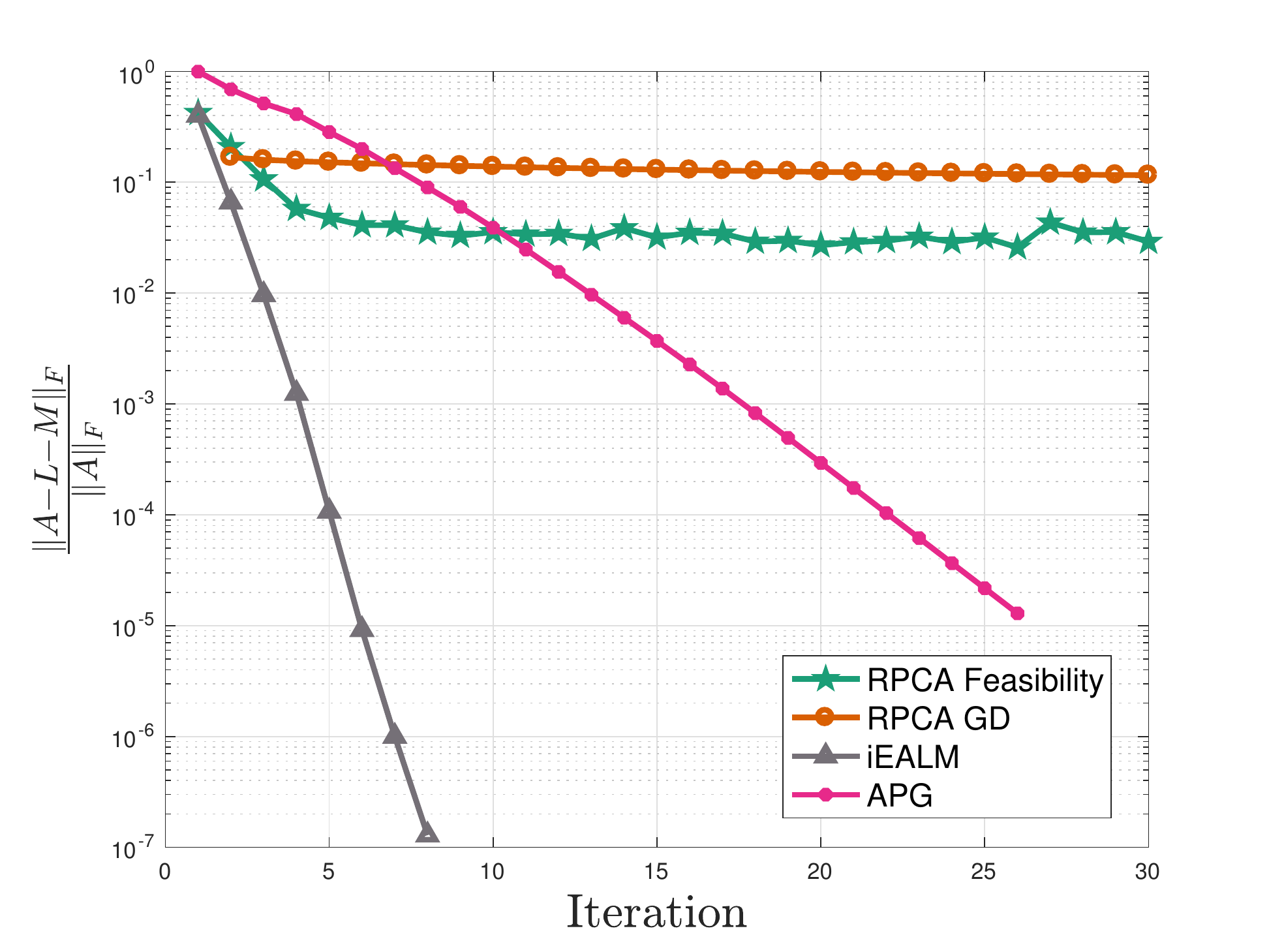}
    \caption{}
    \label{Rel_shadow}
  \end{subfigure} 
  \vspace{-0.1in}
  \caption{\small{(a) Comparison of relative error vs. iteration between RPCA F, iEALM, APG, and RPCA GD on {\tt Basic} video, frame size $144\times176$. iEALM takes 55.41 seconds, RPCA GD takes 36.08 seconds (30 iterations), APG takes 51.14 seconds, and RPCA CF takes 42.72 seconds (30 iterations). The threshold $\epsilon$ for all algorithms is set to $2\times10^{-4}.$ (b) Comparison of relative error (log scale) vs. iteration between RPCA F, iEALM, APG, and RPCA GD on Shadow removal, {\tt Yale Extended Face dataset}, subject B12. iEALM takes 2.05 seconds (threshold $10^{-7}$), RPCA GD takes 4.73 seconds (30 iterations, threshold $2\times10^{-4}$), APG takes 10.9 seconds (threshold $10^{-7}$), and RPCA CF takes 4.71 seconds (30 iterations, threshold $2\times10^{-4}$). For APG and iEALM we plot every fifth iteration.}}
  \label{rel_err}
\end{figure}

For simulations, we used the {\tt Basic} sequence of the Stuttgart artificial dataset \cite{cvpr11brutzer}. We compare our methods against inexact augmented Lagrange methods of multiplier (iEALM) of Lin et al.\ \cite{LinChenMa}, accelerated proximal gradient (APG) of Wright et al.\ \cite{APG}, and RPCA GD. We downsampled the video frames to a resolution of $144\times 176$ and for iEALM we use $\mu=1.25/\|A\|_2$ and $\rho=1.5$. For both APG and iEALM we set $\lambda=1/\sqrt{\max\{m,n\}}$. For RPCA GD and our method we use target rank $r=2$, sparsity $\alpha = 0.1$. The threshold $\epsilon$ for all methods are kept to $2\times 10^{-4}$. %In Figure \ref{Rel_bg} we plot the relative error $\|A-L-M\|_F/\|A\|_F$ over iterations and iEALM outperforms other methods in terms of lowering the relative error. 
The qualitative analysis on the background and foreground recovered on the sample frame of the {\tt Basic} sequence in Figure \ref{Qual_basic} suggests that our method and RPC GD recover a visually better quality background and foreground compare with the other methods. We also note that RPCA GD recovers a foreground with more false positives compare to our method and iEALM and APG cannot remove the static foreground object. 
\begin{figure}
    \centering
    \includegraphics[width = \textwidth]{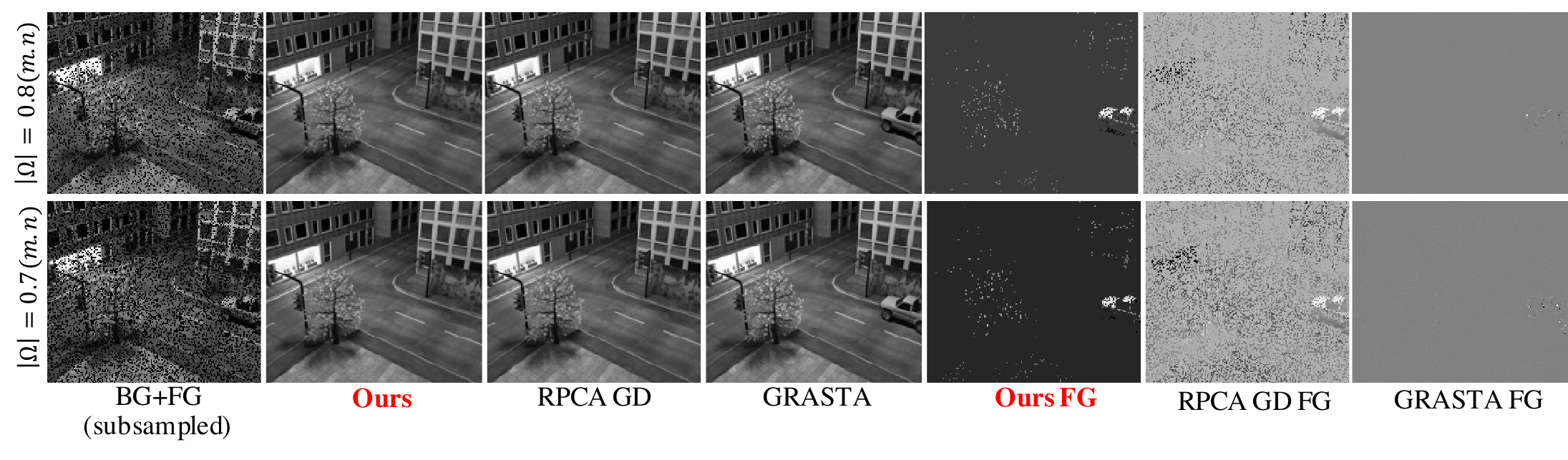}
    \caption{\small{Background and foreground separation on Stuttgart dataset {\tt Basic} video. We used 80\% and 70\% sample, respectively. GRASTA forms a fragmentary background and exhausts around 540 frames to form a stable video. We also note that RPCA GD has more false positives in the foreground. In contrast our feasibility approach recovers superior quality background and foreground.}}
    \label{Qual_basic3}
\end{figure}
\begin{figure}
    \centering
    \begin{subfigure}{0.32\textwidth}
    \includegraphics[width=\textwidth]{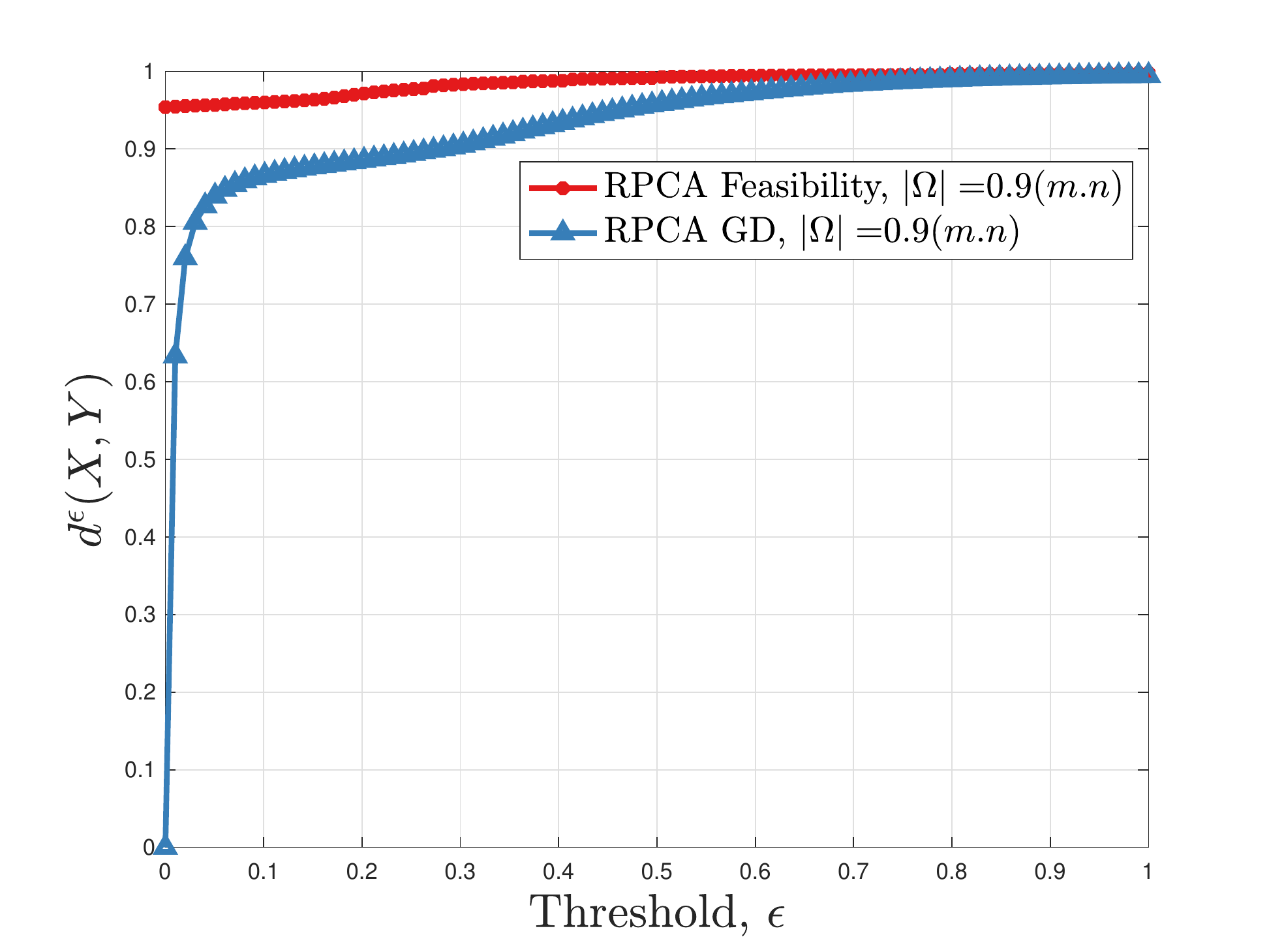}
        \caption{}
      \label{1aa}
      \end{subfigure}
    \begin{subfigure}{0.32\textwidth}
    \includegraphics[width = \textwidth]{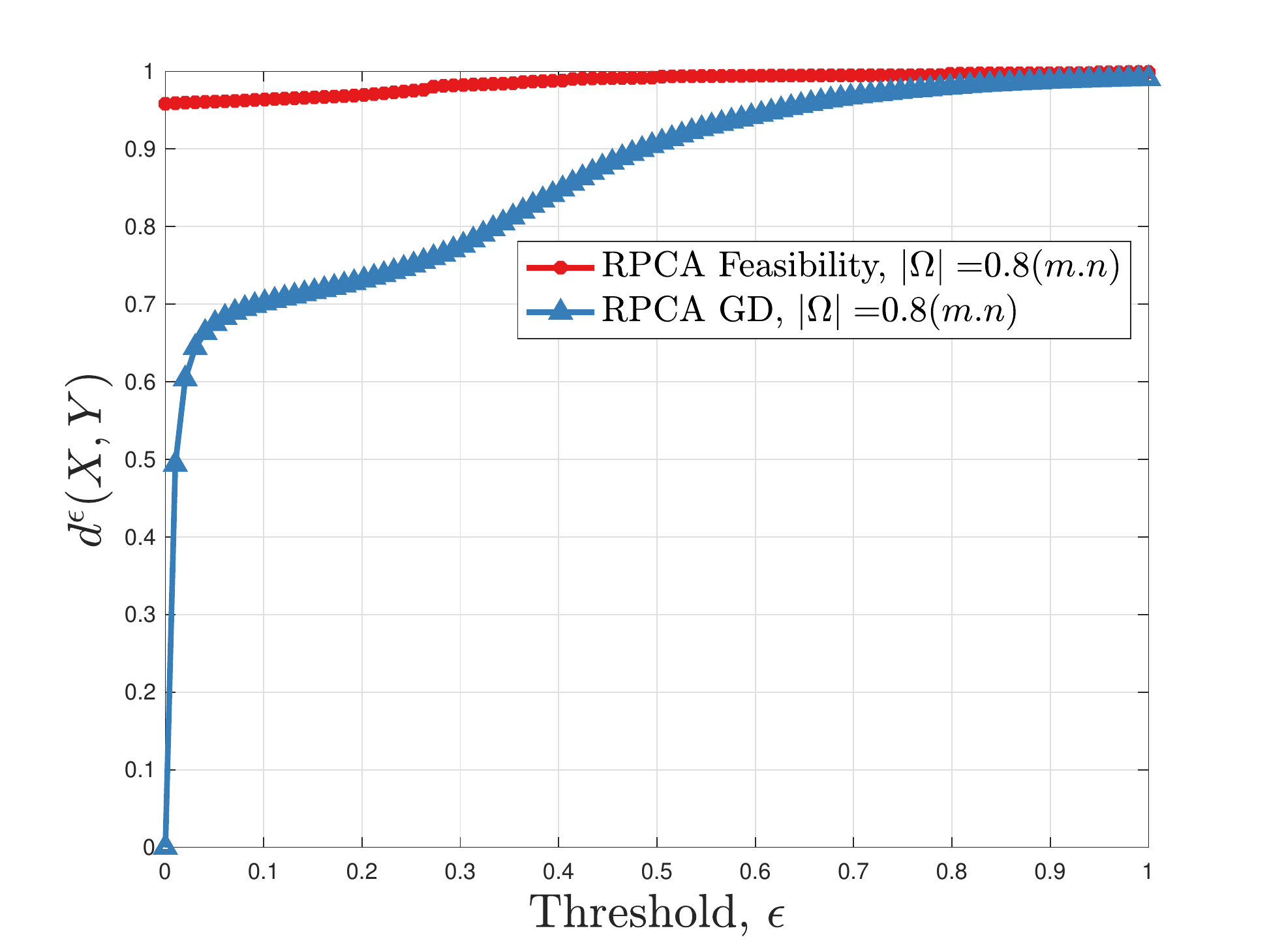}
    \caption{}
    \label{1bb}
  \end{subfigure} 
   \begin{subfigure}{0.32\textwidth}
    \includegraphics[width = \textwidth]{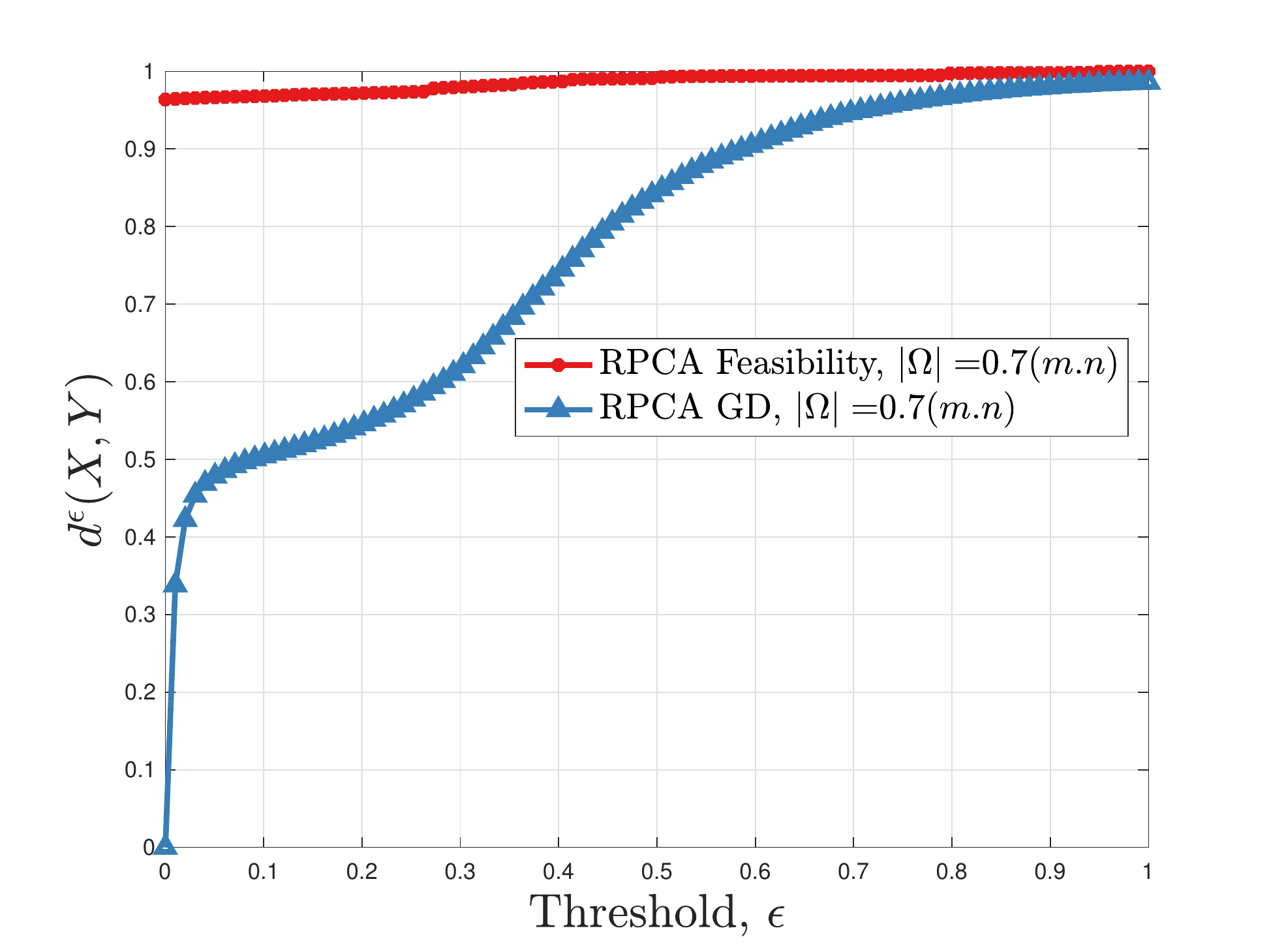}
    \caption{}
    \label{1cc}
  \end{subfigure} 
  \begin{subfigure}{0.32\textwidth}
    \includegraphics[width = \textwidth]{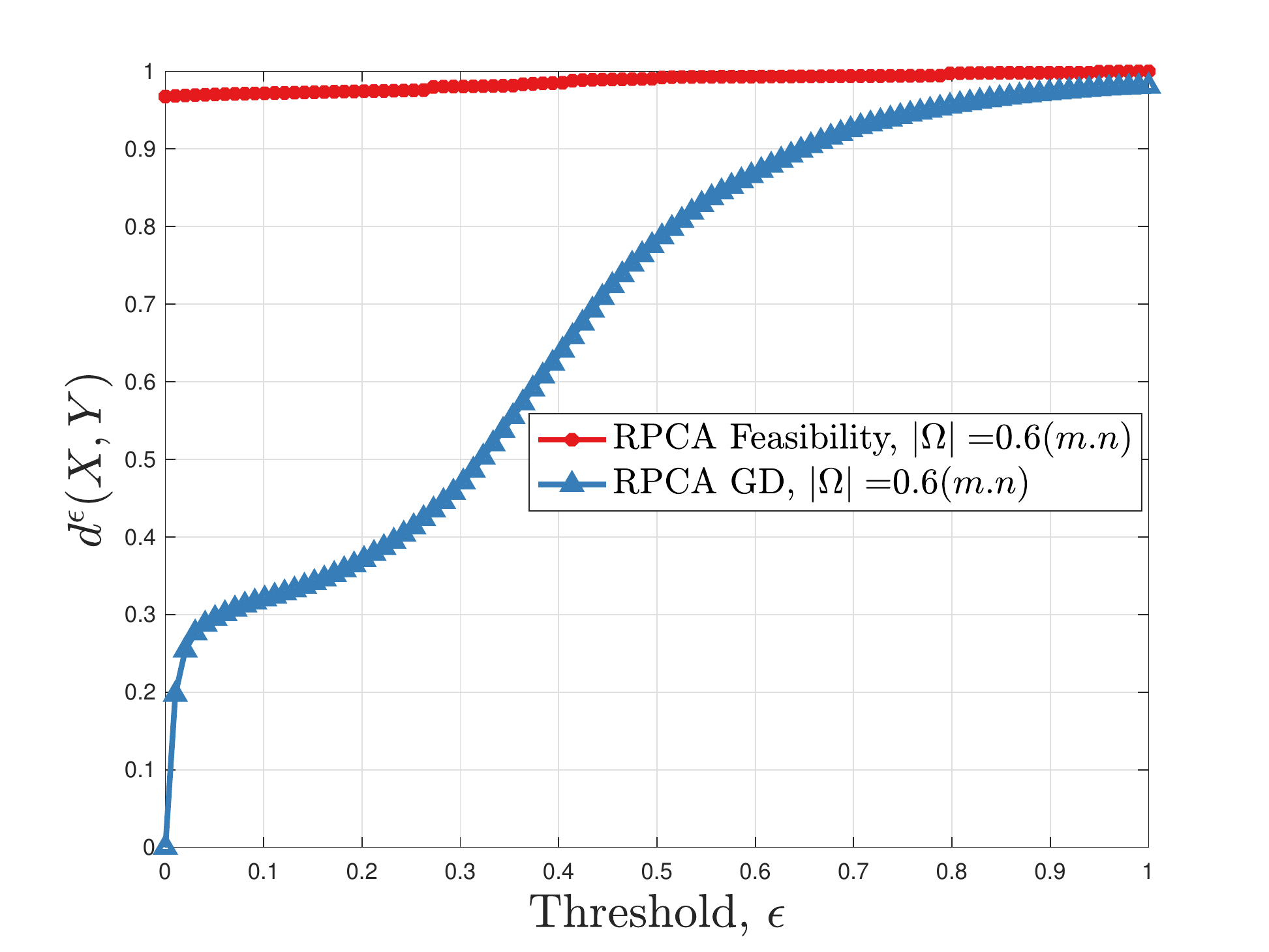}
    \caption{}
    \label{1dd}
      \end{subfigure} 
     \begin{subfigure}{0.32\textwidth}
    \includegraphics[width = \textwidth]{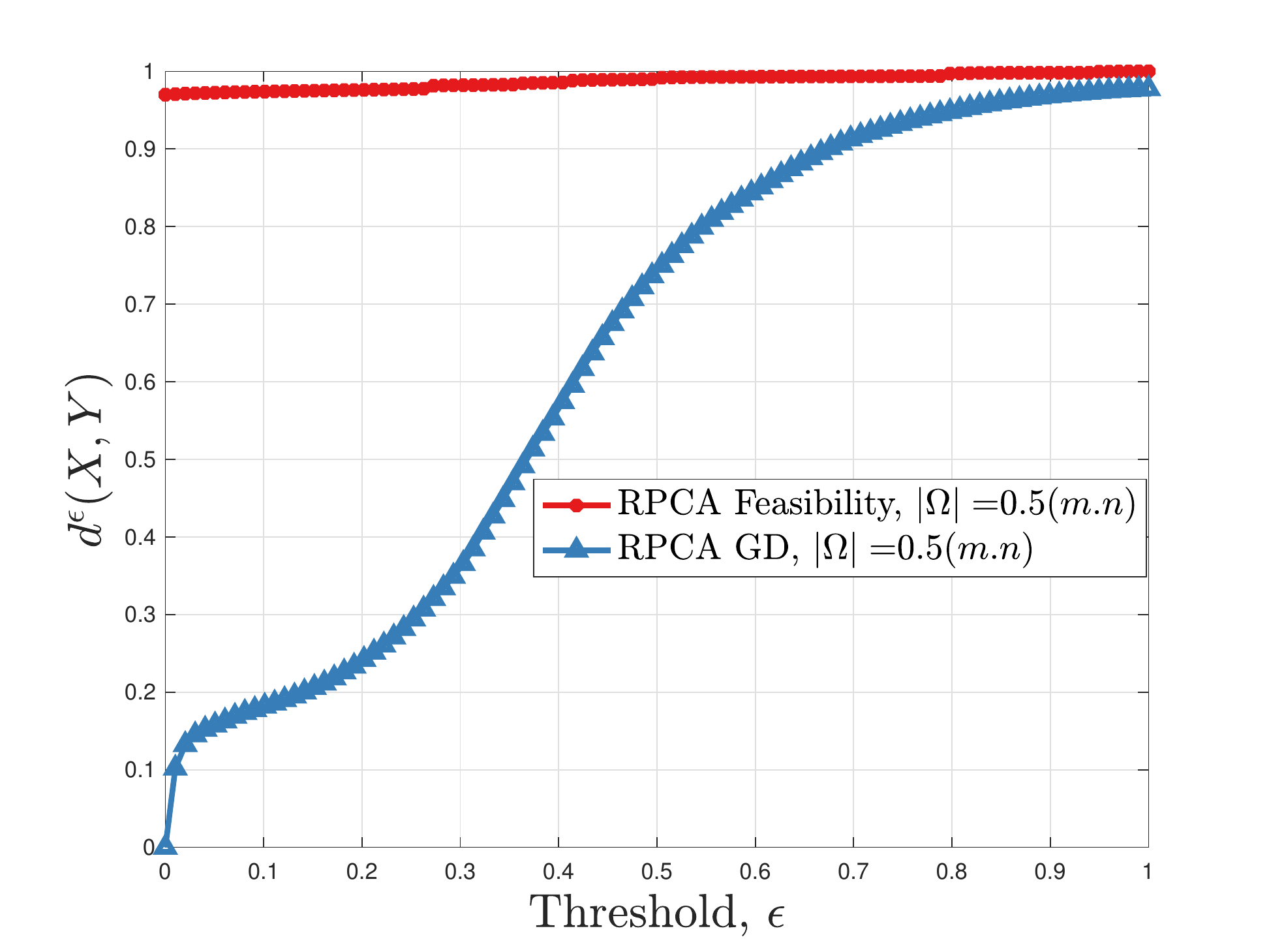}
    \caption{}
    \label{1ee}
      \end{subfigure} 
     \begin{subfigure}{0.32\textwidth}
    \includegraphics[width = \textwidth]{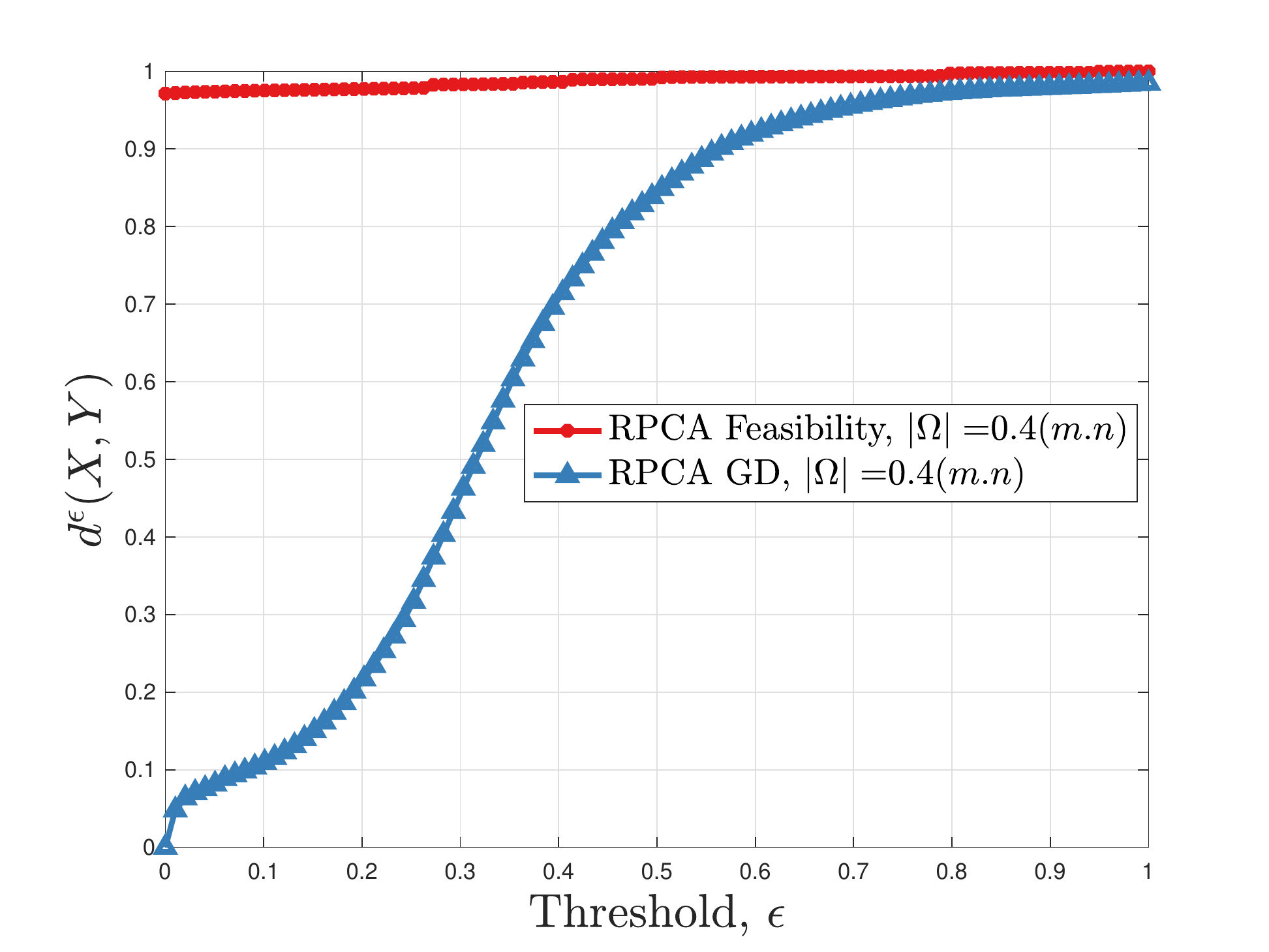}
    \caption{}
    \label{1ff}
  \end{subfigure} 
\caption{\small{Quantitative comparison of foreground recovered by RPCA GD and RPCA F on {\tt Basic} video, frame size $144\times176$ with observable entries: (a) $|\Omega|=0.9(m.n)$, (b) $|\Omega|=0.8(m.n),$ (c) $|\Omega|=0.7(m.n)$, (d) $|\Omega|=0.6(m.n)$, (e) $|\Omega|=0.5(m.n)$, and (f) $|\Omega|=0.4(m.n)$. The performance of RPCA GD drops significantly as $|\Omega|$ decreases. In contrast, the performance of RPCA F stays stable irrespective of the size of $|\Omega|$.}}
  \label{FP_rmc}
\end{figure}
\begin{figure}
    \centering
    \includegraphics[width = \textwidth]{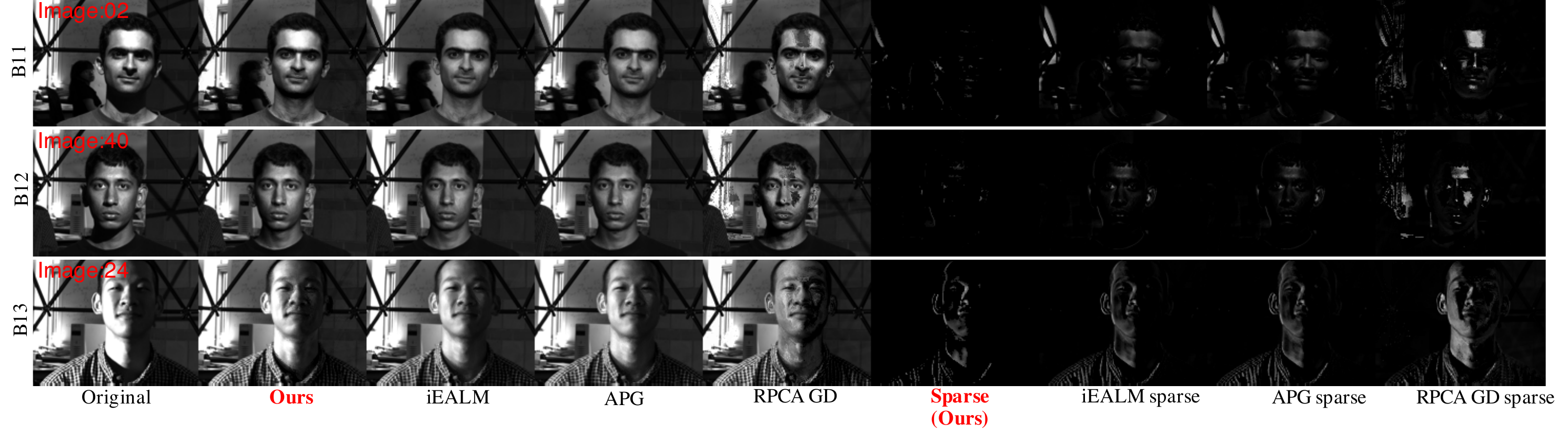}
    \caption{\small{Shadow and specularities removal from face images captured under varying illumination and camera position. Our feasibility approach provides comparable reconstruction to that of iEALM and APG.}}
    \label{shadow_removal}
\end{figure}

\subsubsection{Background and foreground estimation from partially observed data}\label{sec:bg1}
We randomly select the set of observable entries in the data matrix $A$ and tested our algorithm against Grassmannian Robust Adaptive Subspace Tracking Algorithm (GRASTA) \cite{grasta} and RPCA GD. In Figure \ref{Qual_basic2}, we demonstrate the performance on the {\tt Basic} sequence of the Stuttgart dataset with $|\Omega|=0.9(m.n).$ The parameters for our algorithm and RPCA GD are set as same as in Section~\ref{sec:bg}. For GRATSA we set the parameters same as mentioned in the authors' website\footnote{\url{https://sites.google.com/site/hejunzz/grasta}}. Next, in Figure \ref{Qual_basic3} we show the background and foreground separated by different methods on the same sample frame of the {\tt Basic} sequence of Stuttgart dataset for different subsample rate.  %We give a thorough description of the metric $\epsilon$--proximity, that is, $d^{\epsilon}(X,Y)$ used to plot results in Figure \ref{FP_rmc_1} and more quantitative results by using different $|\Omega|$ in the {Appendix}. %We also provide more qualitative results by using different $|\Omega|$ in the {Appendix}.In Figure \ref{Qual_basic3} we show the background and foreground separated by different methods on a sample frame of the {\tt Basic} sequence of Stuttgart dataset for different subsample rate. 
It is evident that RPCA GD and our approach has the best background reconstruction. However, when compared with the foreground ground truth our method has a better quantitative measure as RPCA GD has a higher number of false positives in the foreground~(see Figure \ref{FP_rmc}).

\paragraph{Background and foreground estimation from partially observed data: Quantitative measure.}\label{sec:bg3}
Let $X=(X_1,\dots,X_n)$ and $y=(Y_1,\dots,Y_n)$ be  two video sequences (reconstructed foreground and ground truth foreground), where $X_i, Y_i\in \R^m$ are vectors corresponding to frame $i$, each containing $m$ pixels.  We scale all pixel values to $[0,1].$ To compare the video sequences, we define  an $\epsilon$--proximity measure of $X$ and $Y$ as
\[d^{\epsilon}(X,Y)\eqdef \frac{1}{nm} \sum_{i=1}^n \sum_{k=1}^{m} d^{\epsilon}(X_{ik},Y_{ik}),\] where  \[d^{\epsilon}(u,v) \eqdef \begin{cases} 1&\quad  |u-v|\le\epsilon,\\
0 & \quad \text{otherwise,}
\end{cases}\] 
and $\epsilon \in [0,1]$ is a threshold. Clearly, $0\leq d^\epsilon(X,Y) \leq 1$, $\epsilon \mapsto d^\epsilon(X,Y)$ is  increasing, and $d^1(X,Y) = 1$. If $d^\epsilon (X,Y) =\alpha$, then $\alpha\times 100  \%$ of pixels in the recovered video are within $\epsilon$ distance, in absolute value, from the ground truth.

In Figure \ref{FP_rmc}, we plot $d^\epsilon(X,Y)$ as a function of $\epsilon$ for our method and  RPCA GD. We use the {\tt Basic} sequence of the Stuttgart dataset and vary the cardinality of the set of observable entries $\Omega$. Our feasibility approach outperforms RPCA GD for all values of $|\Omega|$ and $\epsilon$, and the difference is striking; in particular, our method recovers more than 95\% pixels correctly even for under accuracy (i.e., small $\epsilon$) requirements.
% Our feasibility approach has mean $d^{\epsilon}(X,Y)$ as 0.9869 and in contrast RPCA GD has mean 0.7811. 
\subsubsection{Shadow removal}
The images of a face exposed to a wide variety of lighting conditions can be approximated accurately by a low-dimensional linear subspace. More specifically, the images under distant, isotropic lighting lie close to a 9-dimensional linear subspace which is known as the harmonic plane \cite{basri_jacobs}. We used the {\tt Extended Yale Face Database} for our experiments \cite{yale_face}. We used iEALM, APG, and RPCA GD to compare against our algorithm. We downsampled each image to a resolution of $120\times 160$ and use 63 images of a subject in each test. For APG and iEALM, we set the parameters same as in Section \ref{sec:bg}. For RPCA GD and our method, we set target rank $r=9$ and sparsity level $\alpha = 0.1$. %The threshold $\epsilon$ for all methods are kept to $2\times 10^{-4}$. %In Figure \ref{Rel_shadow}, we plot the relative error $\|A-L-M\|_F/\|A\|_F$ over iterations and iEALM and APG outperform other methods in terms of lowering the relative error. 
The qualitative analysis on the recovered images shows that our feasibility approach provides a comparable reconstruction similar to that of iEALM and APG (see Figure \ref{shadow_removal}). In contrast, the reconstructed face images by RPCA GD are of poor visual quality. 
%\begin{figure}
%    \centering
%    \begin{subfigure}{0.32\textwidth}
%    \includegraphics[width=\textwidth]{Results/Quant_BG_RMC/quant_90-eps-converted-to.pdf}
%        \caption{}
%      \label{1aa}
%      \end{subfigure}
%    \begin{subfigure}{0.32\textwidth}
%    \includegraphics[width = \textwidth]{Results/Quant_BG_RMC/quant_80-eps-converted-to.pdf}
%    \caption{}
%    \label{1bb}
%  \end{subfigure} 
%   \begin{subfigure}{0.32\textwidth}
%    \includegraphics[width = \textwidth]{Results/Quant_BG_RMC/quant_70-eps-converted-to.pdf}
%    \caption{}
%    \label{1cc}
%  \end{subfigure} 
%  \vspace{-0.1in}
% \caption{\small{Quantitative comparison of foreground recovered by RPCA F and RPCA GD on {\tt Basic} video, frame size $144\times176$ with observable entries: . The performance of RPCA GD drops significantly as $|\Omega|$ decreases. In contrast, the performance of RPCA F stays stable irrespective of the size of $|\Omega|$. See Appendix for description of the metric $d^{\epsilon}(X,Y).$}}
%    \label{FP_rmc_1}
%\end{figure}

\subsubsection{Inlier detection}\label{inlier detection}
Our next set of experiments demonstrate the power of our method in detecting the inliers and the outliers from a composite dataset. For this purpose, we artificially create a dataset that contains both inliers and outliers. We used the {\tt Yale Extended Face Database} to construct a data set that contains images of faces under different illuminations. We denote this as inliers. With these inliers, we infused 400 random natural images from the BACKGROUND/Google folder of the Caltech101 database \cite{caltech101} that serve as outliers. Both inlier and outlier images were converted to grayscale and the resolution is downsampled to $20\times20$ pixels. For the inliers, we are looking for the 9-dimensional linear subspace where the images of the same face lie. That is, similar to \cite{rspca} we consider a low-dimensional model to the set of all faces aka inliers. We note that the seven algorithms proposed in \cite{rspca} are designed to explicitly find a low-rank subspace. In \cite{rspca} the authors used different objective functions and used SGD, incremental approach, and mirror descent algorithms to find the low-dimensional subspace. However, we approach the problem slightly differently. We split the dataset, $A$, into a 9-dimensional low-rank subspace $L$ and expect the specularities and outliers to be in the sparse set $S$. Once we find $L$, we extract the basis of $L$ and project the faces on it.  In Figure \ref{inlier}, we show the qualitative results of our experiments\footnote{The codes and datasets for experiments in Section \ref{inlier detection} and \ref{telescopic data} are obtained from \url{https://github.com/jwgoes/RSPCA}}.

As proposed in \cite{rspca}, we use the error term $\|P_L-P_{L^*}\|_F/{3\sqrt{2}}$, where $L$ is subspace fitted by the PCA to the set of inliers and $L^*$ be the subspace fitted by different algorithms. We normalize the quantity $\|P_L-P_{L^*}\|_F$ because when $L\perp L^*$, $\|P_L-P_{L^*}\|_F/{3\sqrt{2}}\approx 1.$ Therefore, $\|P_L-P_{L^*}\|_F/{3\sqrt{2}}$ is expected to lie between 0 and 1 where the smaller is the better. %We also measure the spectral norm of the projection matrix $P_{L^*}$. 
We refer to Table \ref{inlier_data} for our quantitative results. 
\begin{figure}
    \centering
    \includegraphics[width =\textwidth]{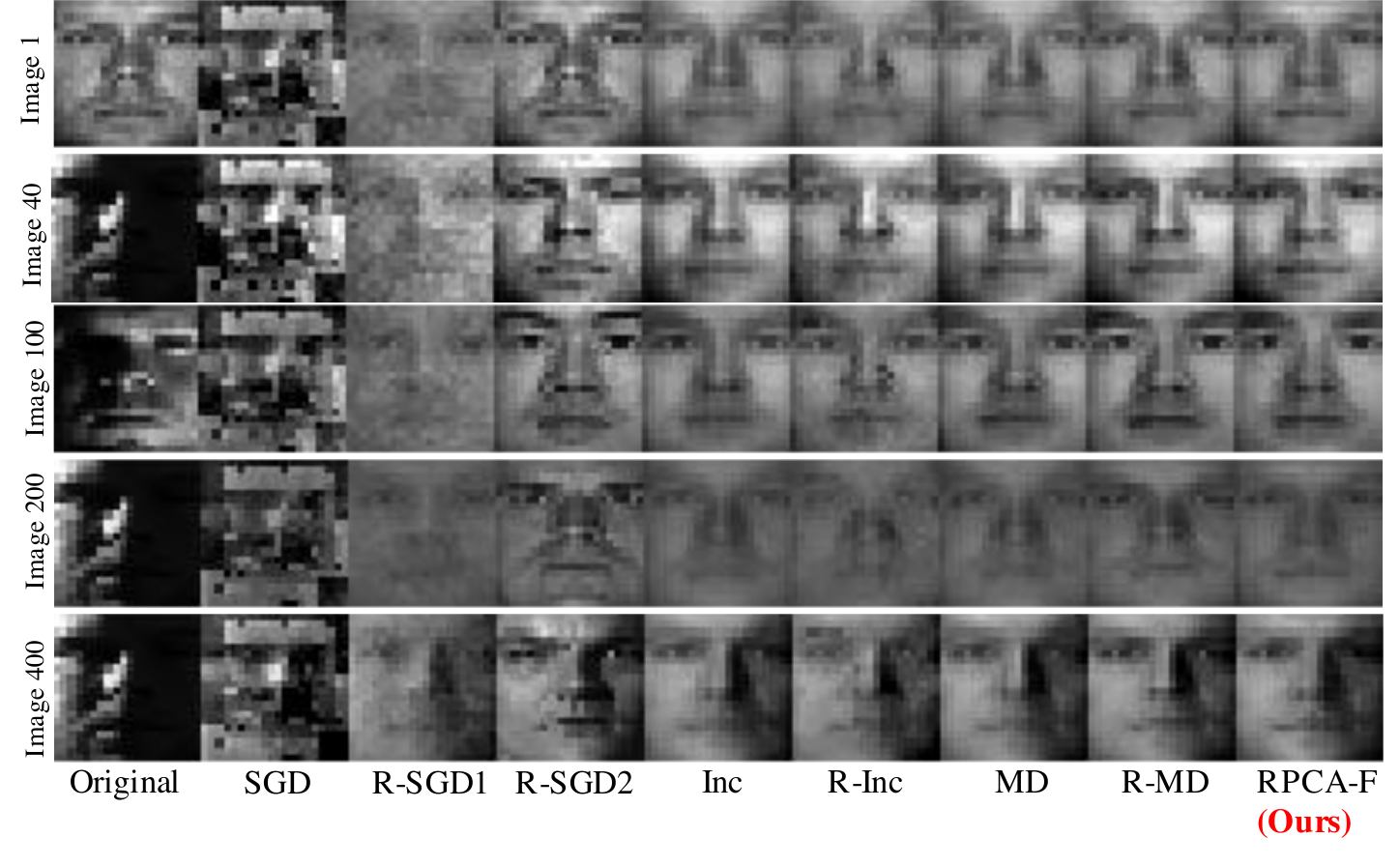}
    \caption{\small{Inliers and outliers detection. Face images captured in different lighting conditions are inliers. We project different faces to 9 dimensional subspaces found by different methods.}}\label{inlier}
\end{figure}
\begin{table}
\begin{center}
\begin{tabular}{|l|c|c|c|c|c|c|c|c|}
\hline
\;\;Metric used & SGD & R-SGD1 & R-SGD2 & Inc & R-Inc &MD & R-MD & RPCA-F\\
\hline
$\frac{\|P_L-P_{L^*}\|_F}{3\sqrt{2}}$&0.6985   & 0.8603   & 4.6607   & 0.7703  &  0.7214 &   0.6711  &  0.6679 &   0.7764\\
\hline
%$\|P_{L^*}\|_2$    & 0.9994 &   1.0000 &   1.0000   & 1.0000  &  0.9992  &  0.9990  &  0.9983  &  0.9995\\
%\hline
\end{tabular}
\end{center}
\caption{\small{Quantitative performance of different algorithms in inlier detection experiment. Except R-SGD2 all methods are highly competitive.}}\label{inlier_data}
\end{table}

\subsubsection{Processing astronomical data}\label{telescopic data}
In this experiment, we use the VIMOS Very Large Telescope (VIMOS-VLT) Deep Survey dataset \cite{vimos} to understand the evolution of the galaxies. We compare the first 4 eigenspectra obtained by our feasibility approach with those of the state-of-the-art methods, such as RE-PCA of \cite{vimos1}, online PCA, and robust online PCA of \cite{rspca}. Similar to Section \ref{inlier detection}, we split the dataset, $A$, into a 4-dimensional low-rank subspace $L$ and after we find $L$, we extract the orthogonal basis of $L$ and plot them. From Figure \ref{eigenspec}, visually, robust online PCA and our method are close relatively best fit to the ground truth RE-PCA of \cite{vimos1}. For details of the data, motivation, and experimental setup we refer the readers to \cite{vimos,vimos1,rspca}. 
\begin{figure}
    \centering
    \includegraphics[width =\textwidth]{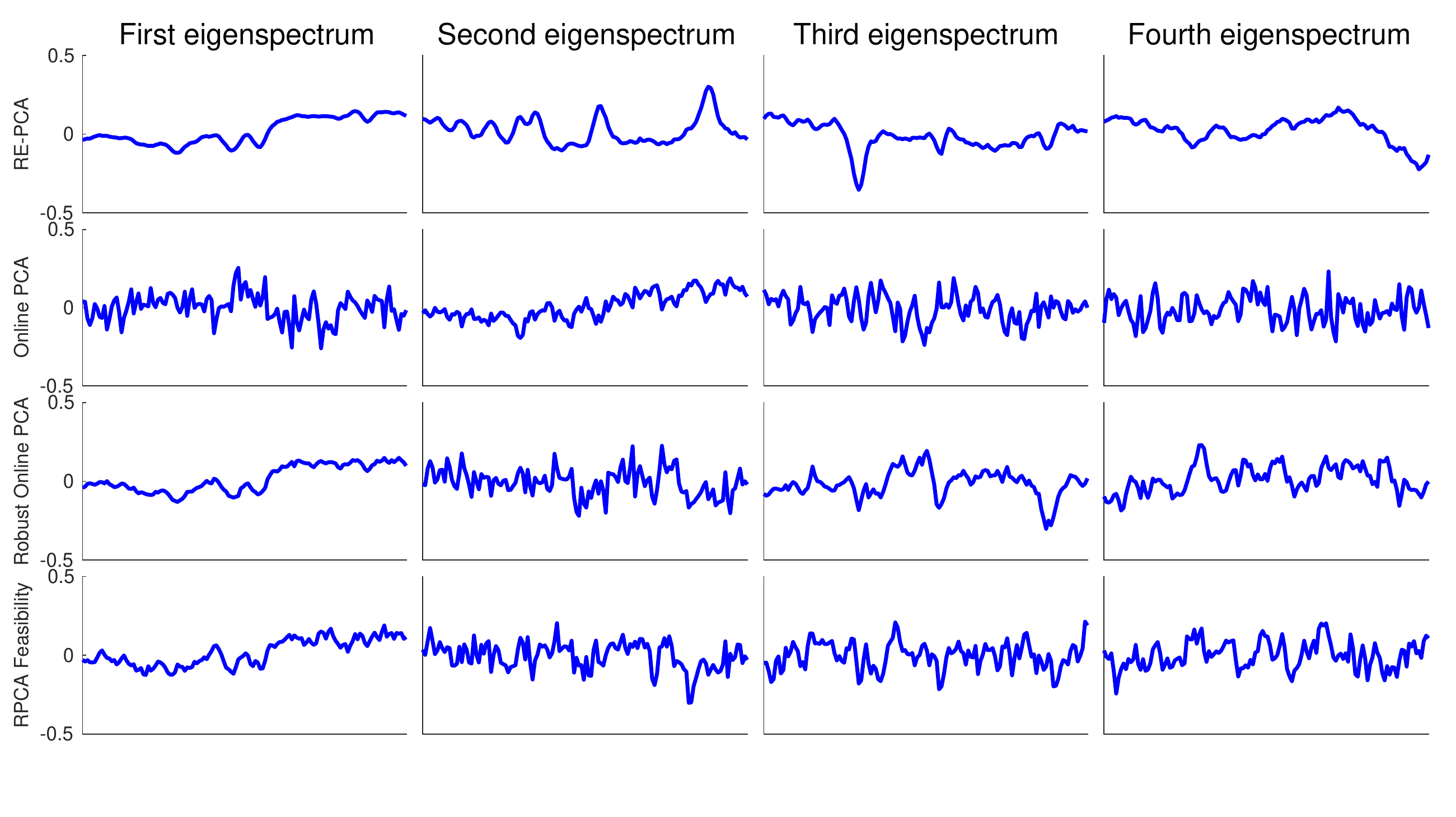}
    \caption{\small{The top four eigenspectra for the VVDS galaxies. The top three rows are the state-of-the art algorithms RE-PCA of \cite{vimos1}, online PCA, and robust online PCA of \cite{rspca}, respectively. The last row is our feasibility approach.}}\label{eigenspec}
\end{figure}

\subsection{Further experiments}

We show many of our experimental results in the Appendix. On synthetic data, we empirically validate the sensitivity of Algorithm \ref{rpca_algo} with respect to the initialization, the choices of $r$, and sparsity level $\alpha$; and the effect of the cardinality of $\Omega$ for Algorithm \ref{rmc_algo} (see  \ref{sec:init}, \ref{sec:init2}, and \ref{sec:init3}). 
%\peter{Briefly point to other experiments we do in supplementary and highlight the most striking results and observations. DFor instanced, definitely talk about Figure~\ref{FP_rmc} which shows some unbelievably good results. Consider including 3 of these plots here in the main body of the paper as the results seem really too good to be relegated to the Supplementary.}
%\vspace{-0.1in}
\section{Conclusion}
%\vspace{-0.1in}

In this paper, we propose a simplistic and novel approach to solve the classic RPCA and RMC problems. We consider an alternating projection algorithm based on the set feasibility approach to solve these problems in their crude form, without considering any further heuristics, such as loss functions, convex and surrogate constraints. Although we did not rigorously study convergence of our method theoretically; we investigated the convergence through numerical simulations on synthetic and real-world data and extensively compared with the current state-of-the-art methods. Our feasibility approach can open a new direction of potential research on online algorithms based on RPCA framework \cite{incpcp, grasta, gosus} that vastly used in video analysis, segmentation, subspace detection, and only a few to mention.

\bibliographystyle{plain}{
\small
\bibliography{egbib_updated}}

\appendix

%\part*{\Large Supplementary Material for: A Nonconvex Projection Method for Robust PCA}

\section{Additional Numerical Experiments \label{sec:app_exp}}
In this section we empirically study the  convergence of Algorithm~\ref{rpca_algo}. 

\subsection{Algorithm~\ref{rpca_algo}: Sensitivity to  initialization \label{sec:init}}

First, we examine how the starting point influences the convergence. We construct $A\in \R^{100\times 100}$ and perform 50 runs of Algorithm~\ref{rpca_algo} for various values of $\alpha$ and $r$. In all cases, we set \[A\eqdef \mathcal{T}_{\alpha}(S')+H_r(L')\] for $S',L'$ with independent random entries from $\cN(0,1)$, and run Algorithm~\ref{rpca_algo} with (correct) parameters $\alpha, r$. 
Figure~\ref{fig:initial} shows the worst, the best, and the median case for each iteration, and illustrates that the convergence (and convergence speed) of the algorithm for the vast majority of cases is independent of the initial point. Moreover, when both rank and sparsity are not too big (sparsity level is 10\% or less and rank is 15\% or less), we observe very fast convergence.

\begin{figure}[h]
\centering
\begin{minipage}{0.25\textwidth}
  \centering
\includegraphics[width =  \textwidth ]{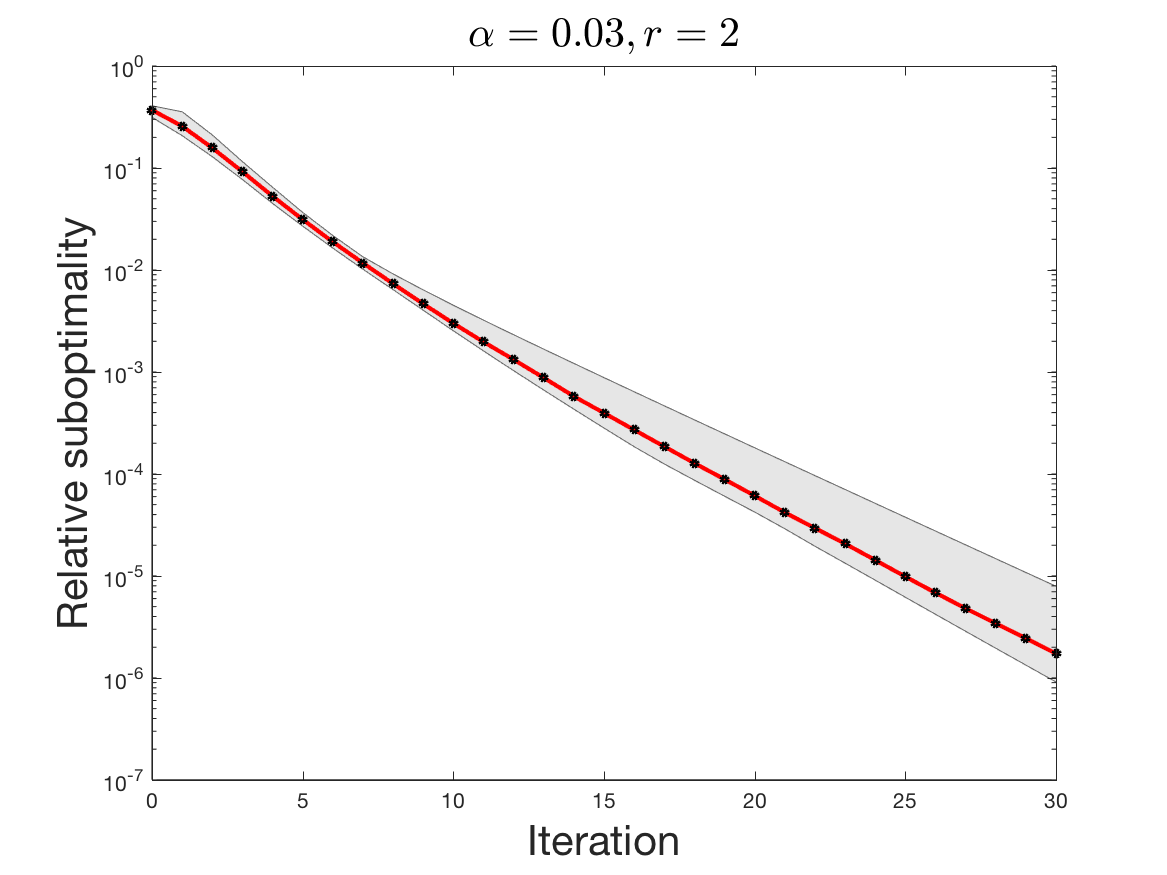}
        %\caption{ Residual vs. iteration  }\label{fig:bl_ex_flops}
\end{minipage}%
\begin{minipage}{0.25\textwidth}
  \centering
\includegraphics[width =  \textwidth ]{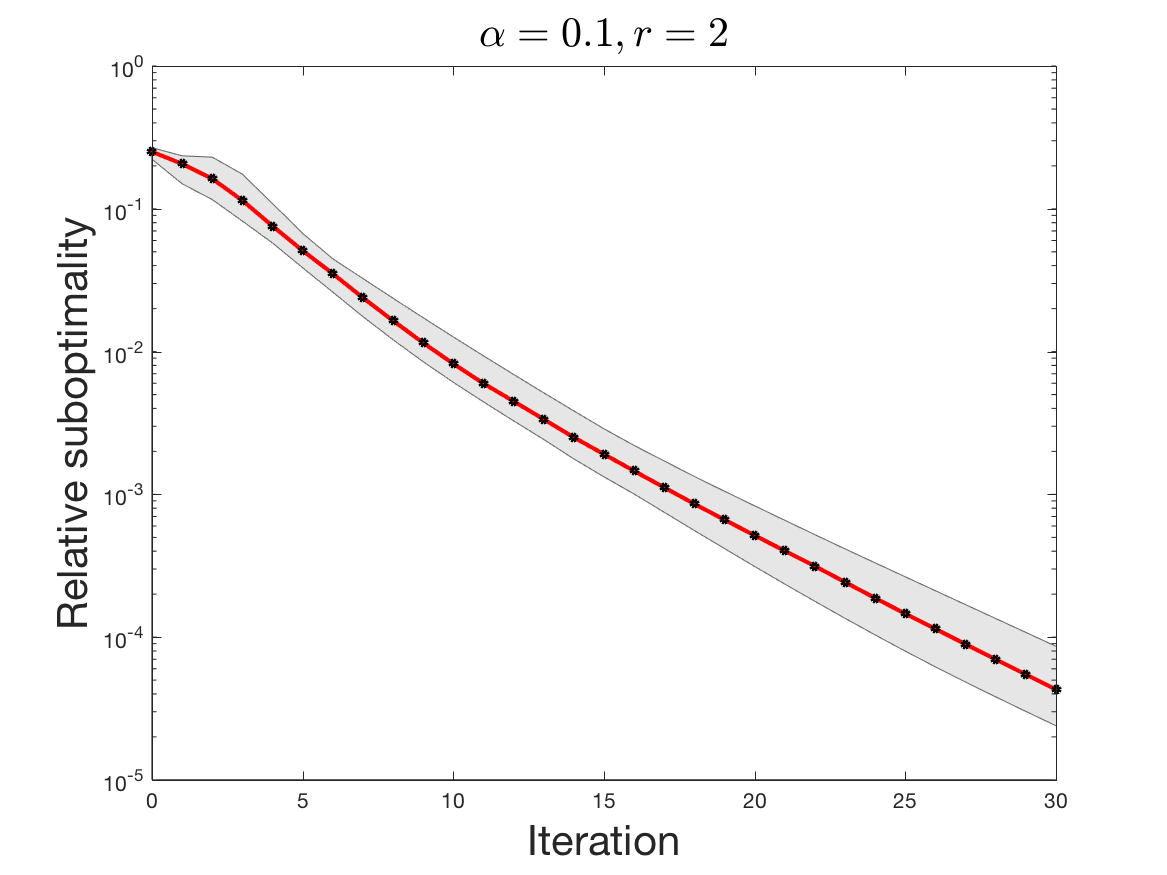}
        %\caption{ Residual vs. iteration  }\label{fig:bl_ex_flops}
\end{minipage}%
\begin{minipage}{0.25\textwidth}
  \centering
\includegraphics[width =  \textwidth ]{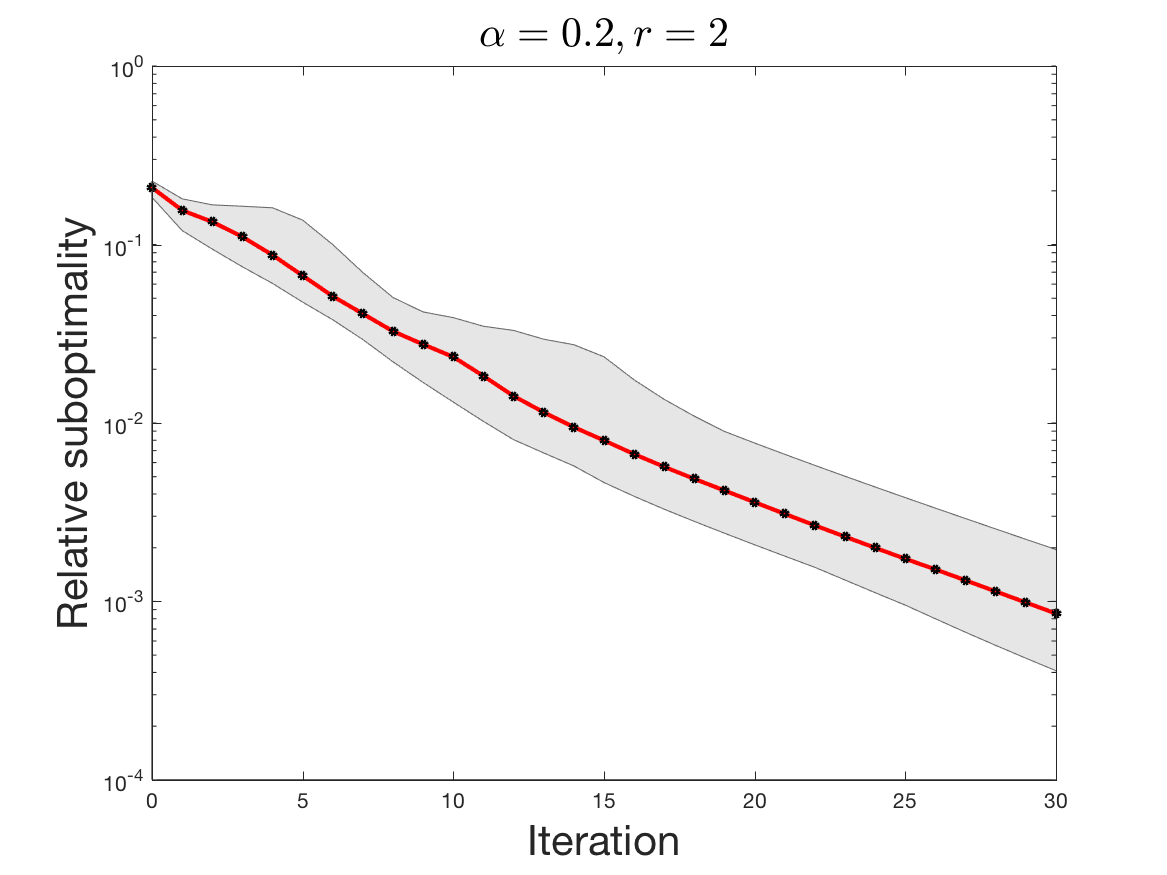}
        %\caption{ Residual vs. iteration  }\label{fig:bl_ex_flops}
\end{minipage}%
\begin{minipage}{0.25\textwidth}
  \centering
\includegraphics[width =  \textwidth ]{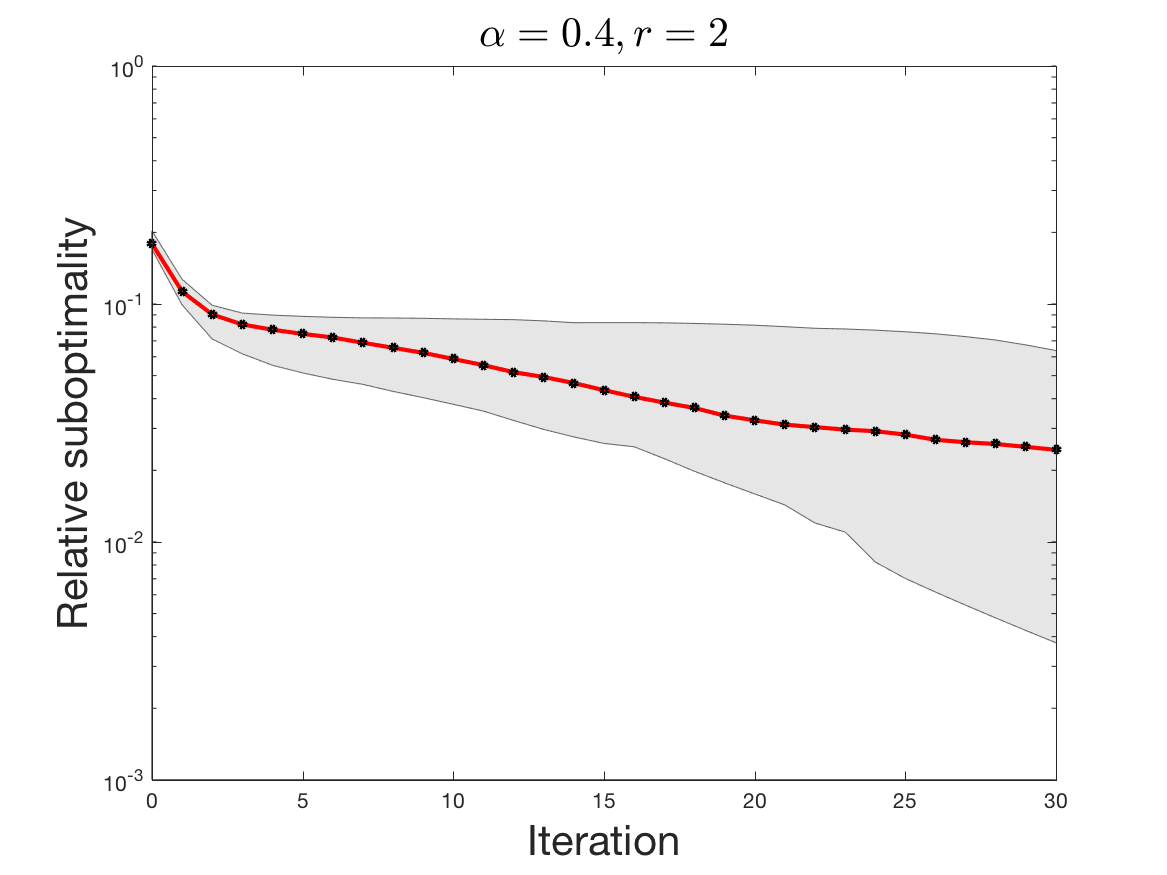}
        %\caption{ Residual vs. iteration  }\label{fig:bl_ex_flops}
\end{minipage}%
\\
\begin{minipage}{0.25\textwidth}
  \centering
\includegraphics[width =  \textwidth ]{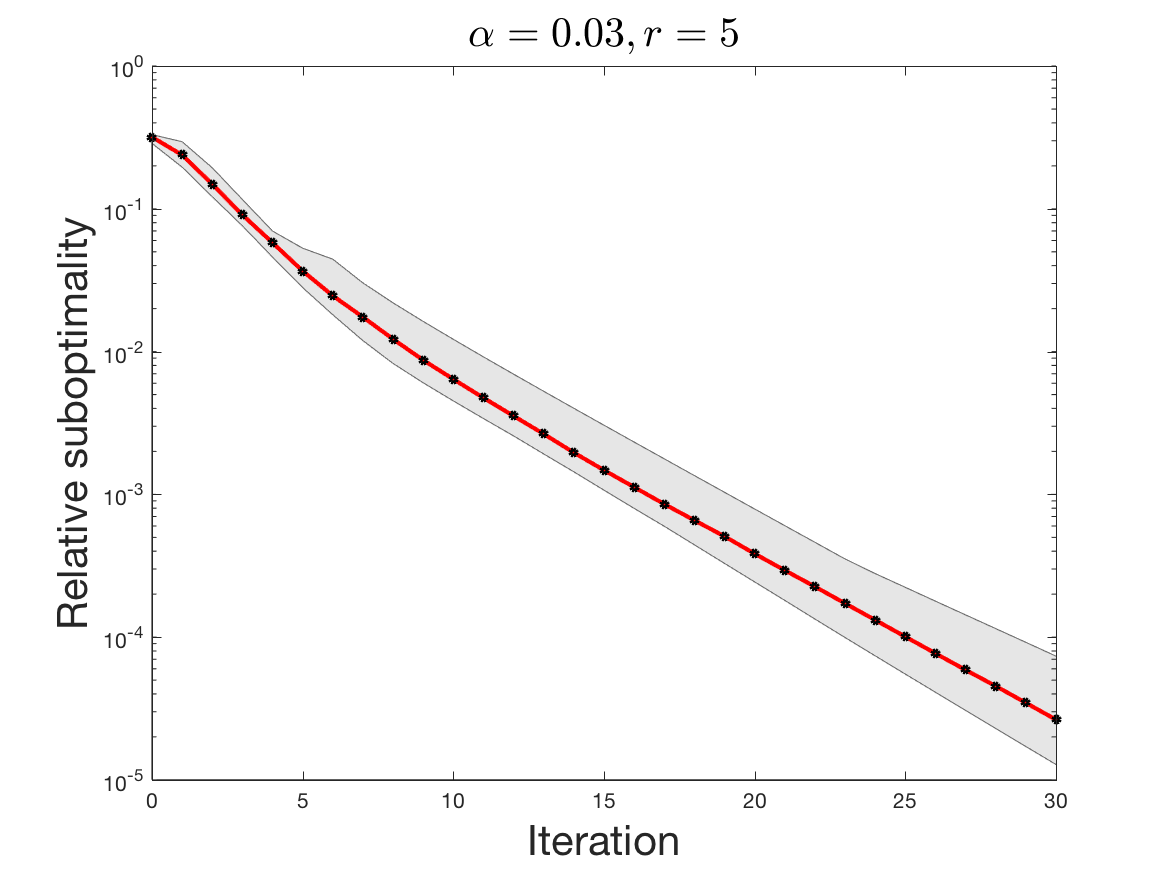}
        %\caption{ Residual vs. iteration  }\label{fig:bl_ex_flops}
\end{minipage}%
\begin{minipage}{0.25\textwidth}
  \centering
\includegraphics[width =  \textwidth ]{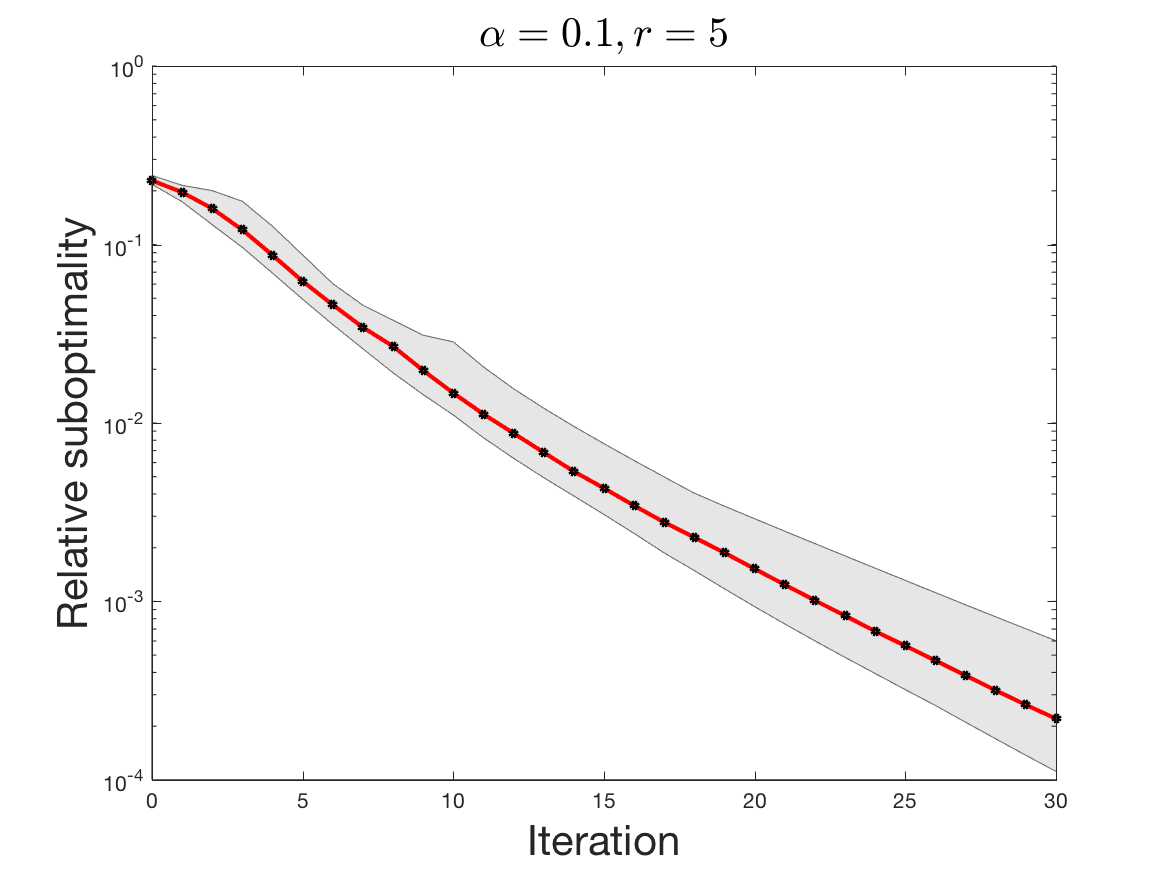}
        %\caption{ Residual vs. iteration  }\label{fig:bl_ex_flops}
\end{minipage}%
\begin{minipage}{0.25\textwidth}
  \centering
\includegraphics[width =  \textwidth ]{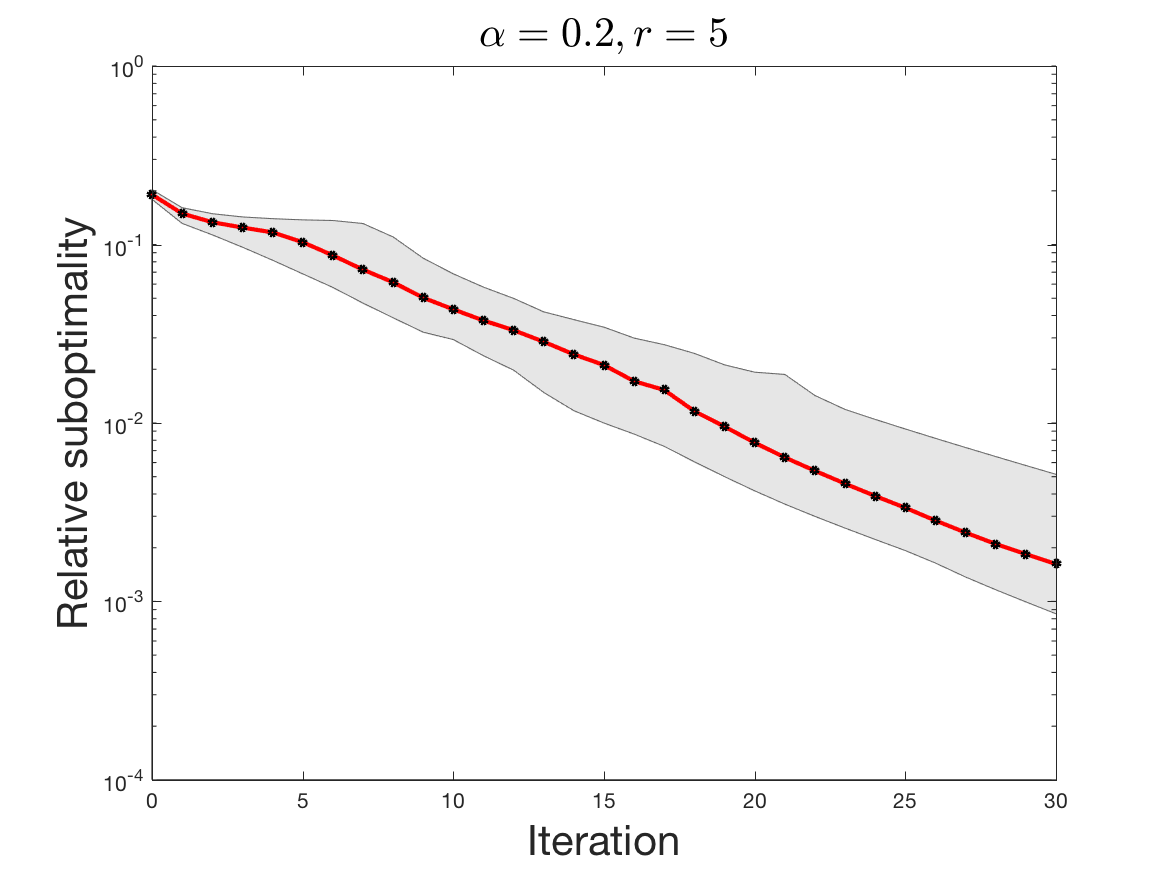}
        %\caption{ Residual vs. iteration  }\label{fig:bl_ex_flops}
\end{minipage}%
\begin{minipage}{0.25\textwidth}
  \centering
\includegraphics[width =  \textwidth ]{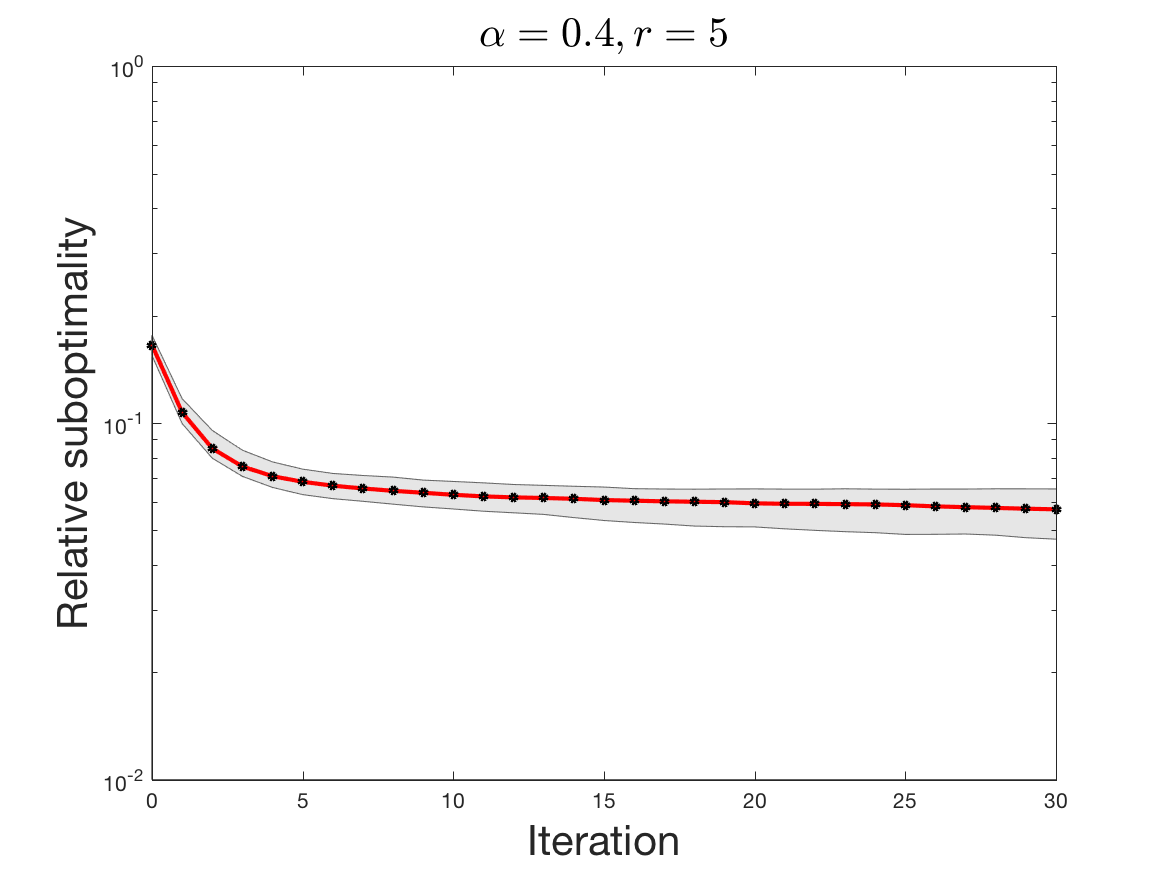}
        %\caption{ Residual vs. iteration  }\label{fig:bl_ex_flops}
\end{minipage}%
\\
\begin{minipage}{0.25\textwidth}
  \centering
\includegraphics[width =  \textwidth ]{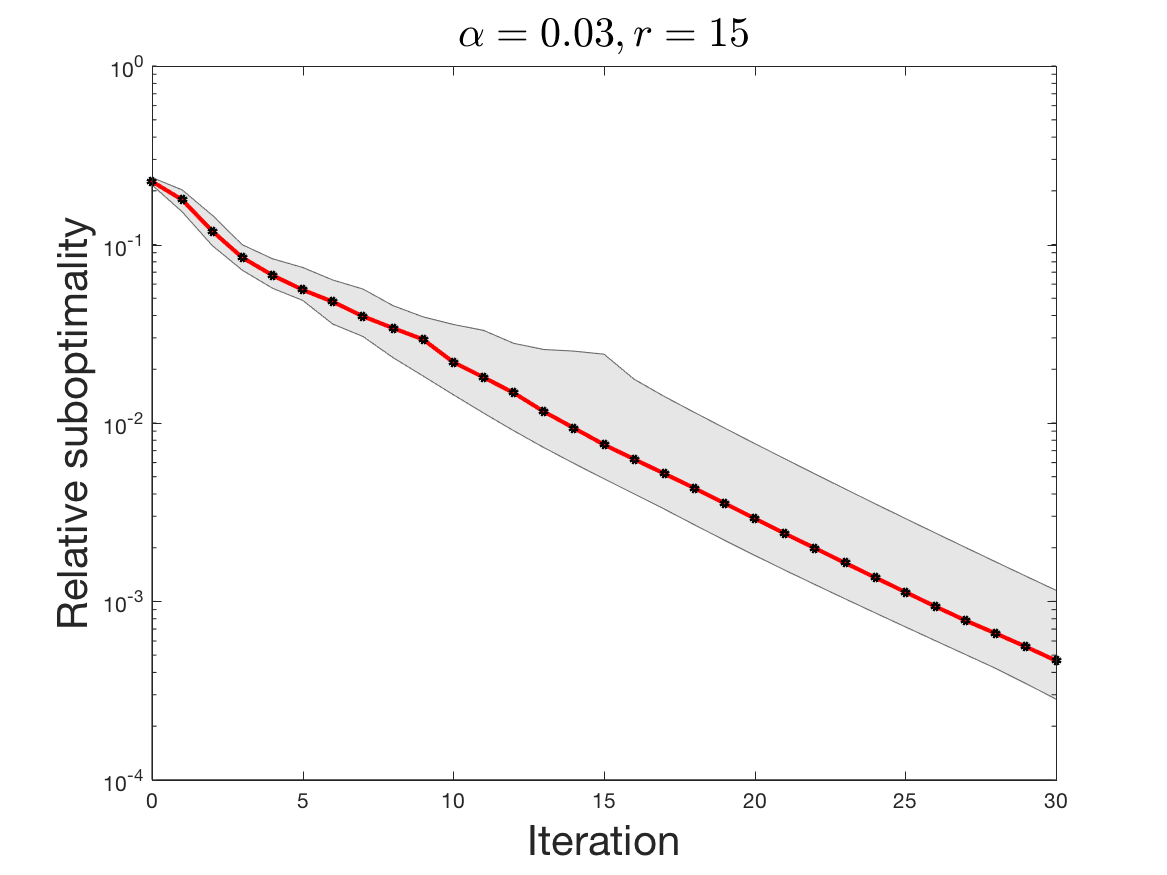}
        %\caption{ Residual vs. iteration  }\label{fig:bl_ex_flops}
\end{minipage}%
\begin{minipage}{0.25\textwidth}
  \centering
\includegraphics[width =  \textwidth ]{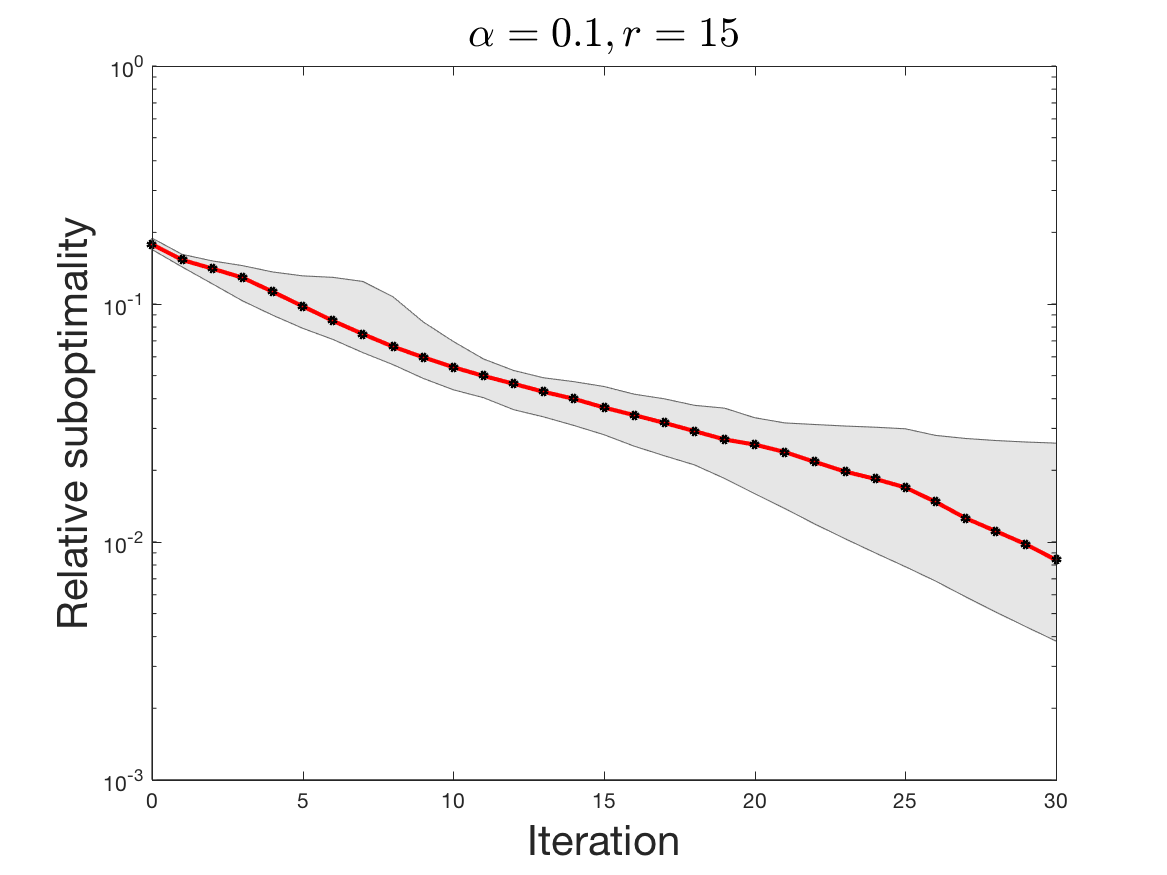}
        %\caption{ Residual vs. iteration  }\label{fig:bl_ex_flops}
\end{minipage}%
\begin{minipage}{0.25\textwidth}
  \centering
\includegraphics[width =  \textwidth ]{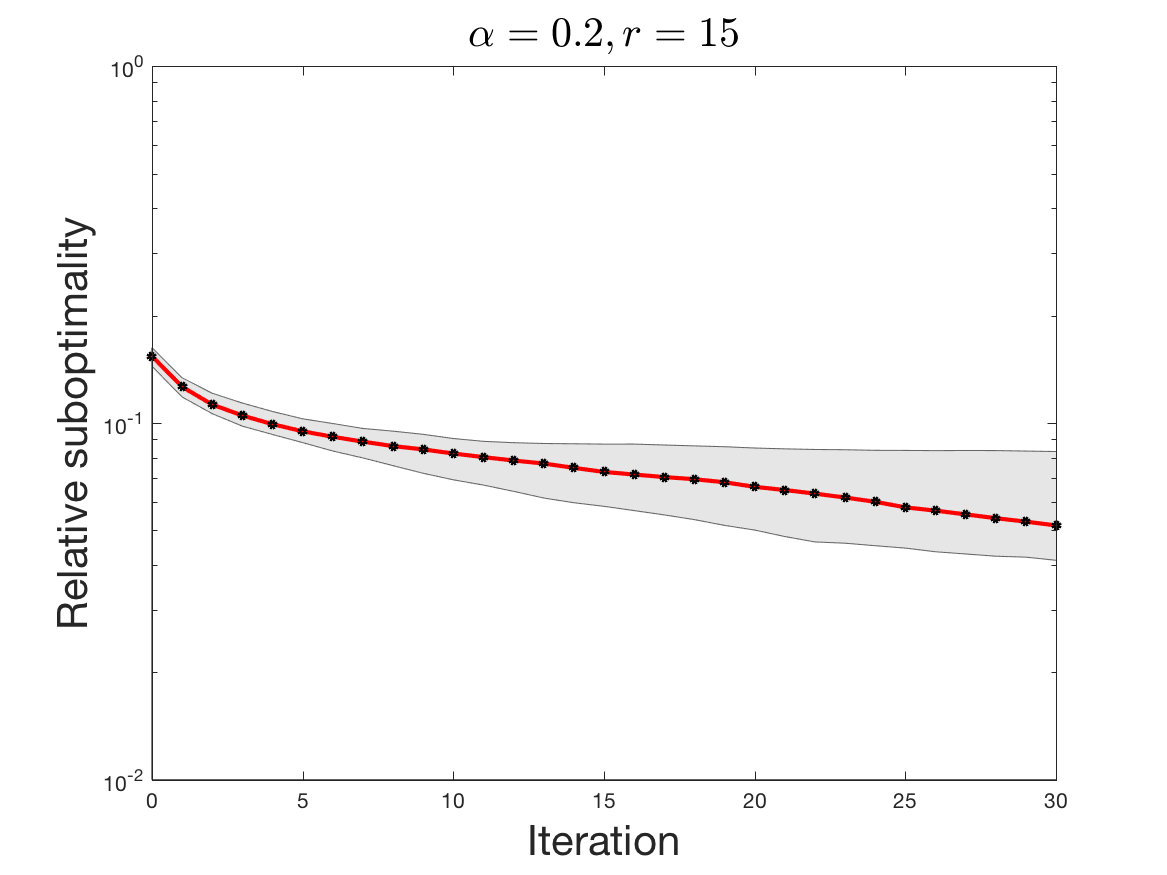}
        %\caption{ Residual vs. iteration  }\label{fig:bl_ex_flops}
\end{minipage}%
\begin{minipage}{0.25\textwidth}
  \centering
\includegraphics[width =  \textwidth ]{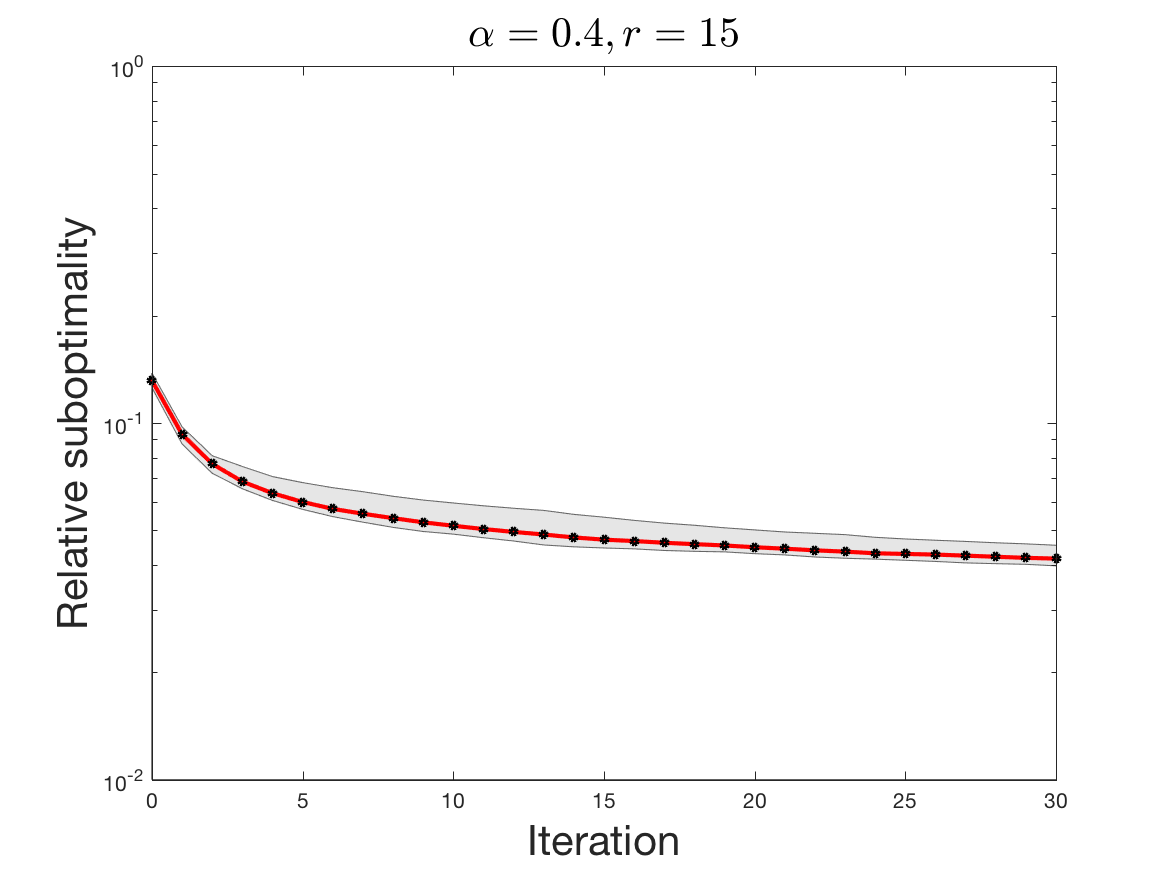}
        %\caption{ Residual vs. iteration  }\label{fig:bl_ex_flops}
\end{minipage}%
\\
\begin{minipage}{0.25\textwidth}
  \centering
\includegraphics[width =  \textwidth ]{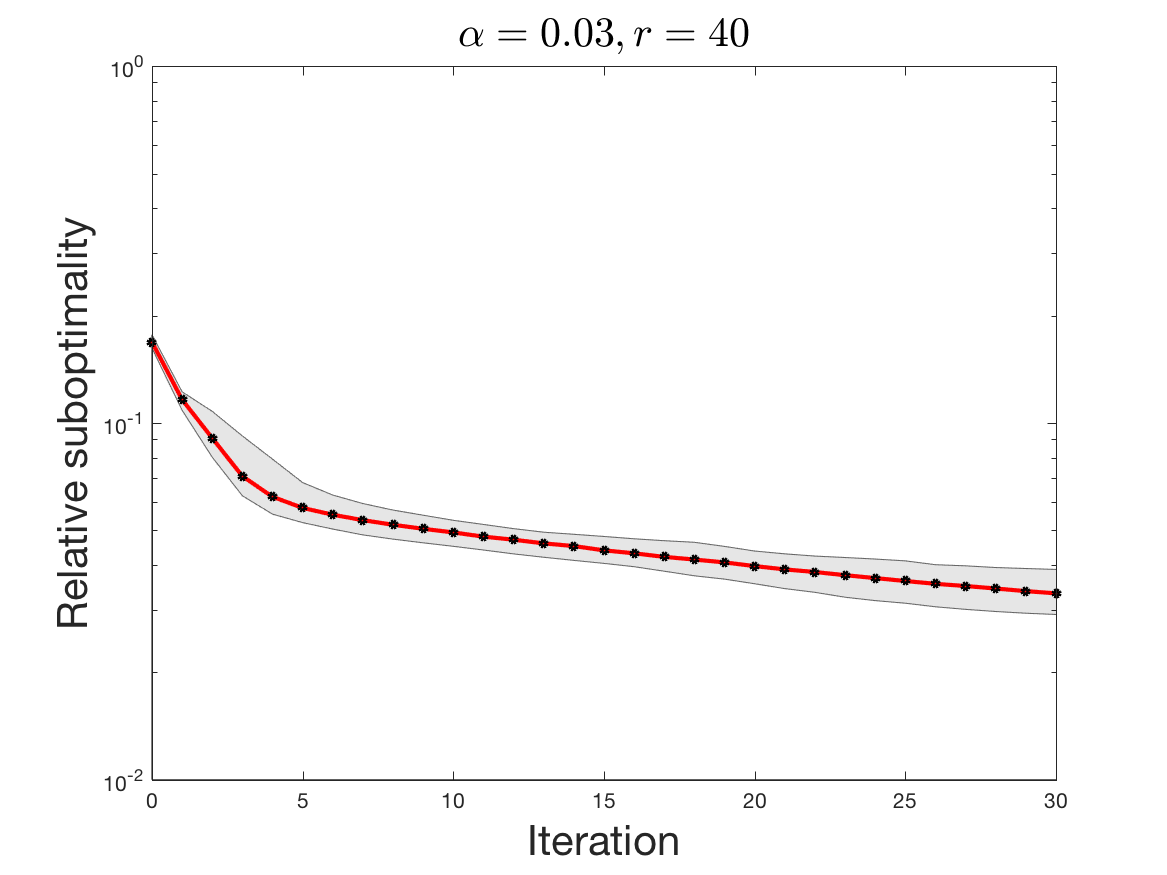}
        %\caption{ Residual vs. iteration  }\label{fig:bl_ex_flops}
\end{minipage}%
\begin{minipage}{0.25\textwidth}
  \centering
\includegraphics[width =  \textwidth ]{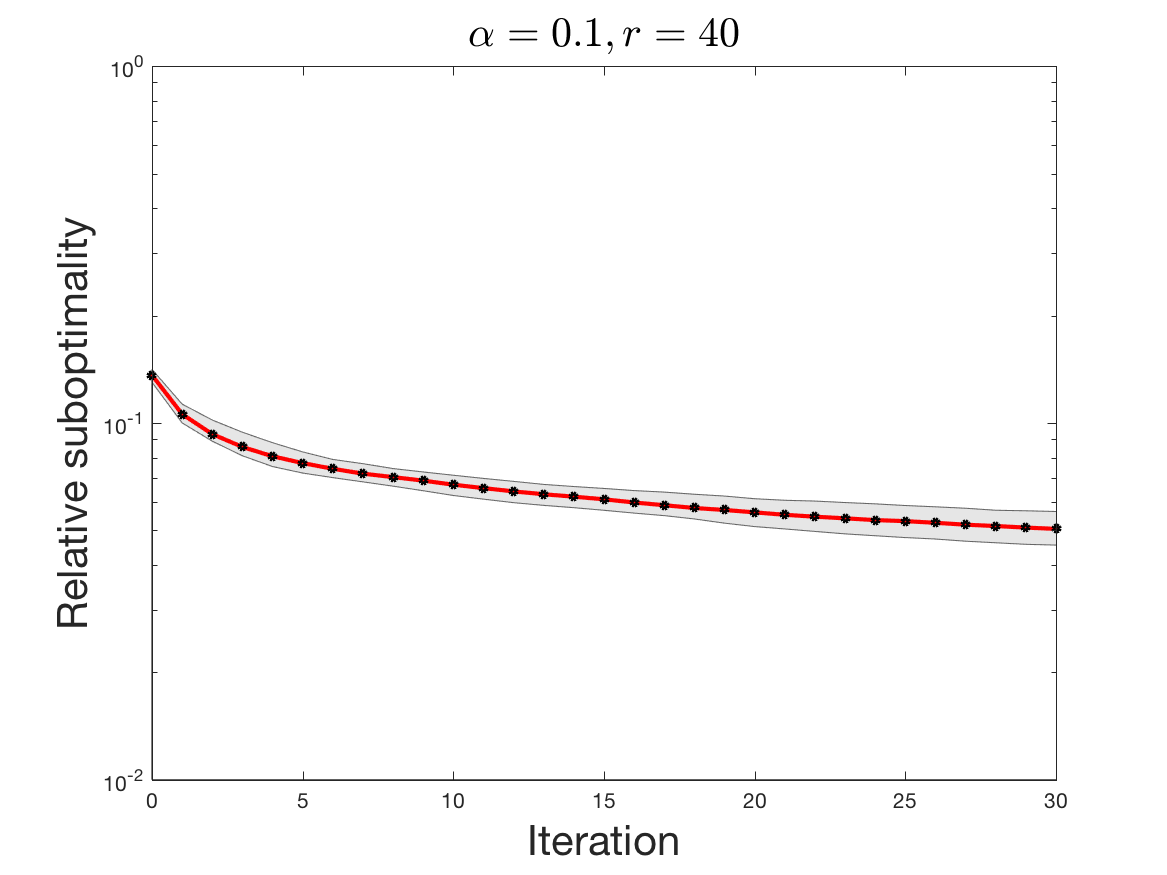}
        %\caption{ Residual vs. iteration  }\label{fig:bl_ex_flops}
\end{minipage}%
\begin{minipage}{0.25\textwidth}
  \centering
\includegraphics[width =  \textwidth ]{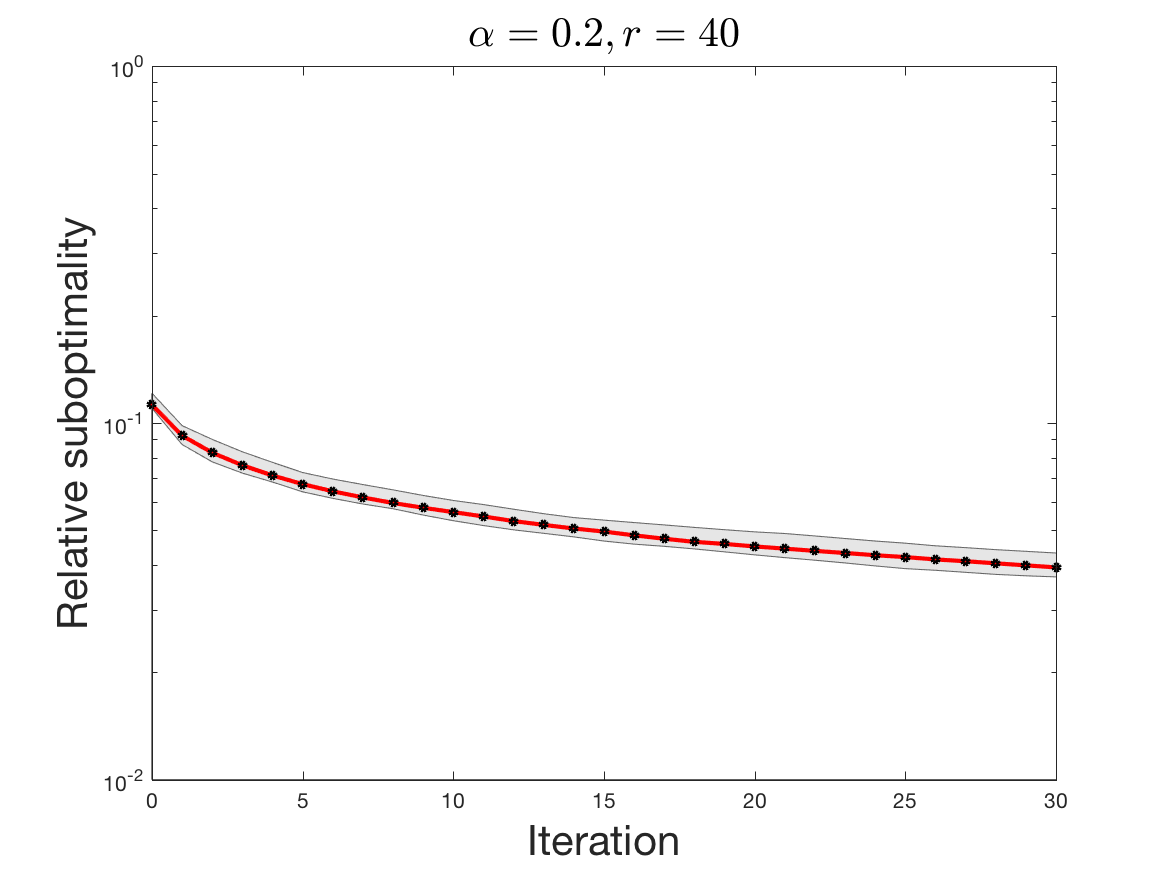}
        %\caption{ Residual vs. iteration  }\label{fig:bl_ex_flops}
\end{minipage}%
\begin{minipage}{0.25\textwidth}
  \centering
\includegraphics[width =  \textwidth ]{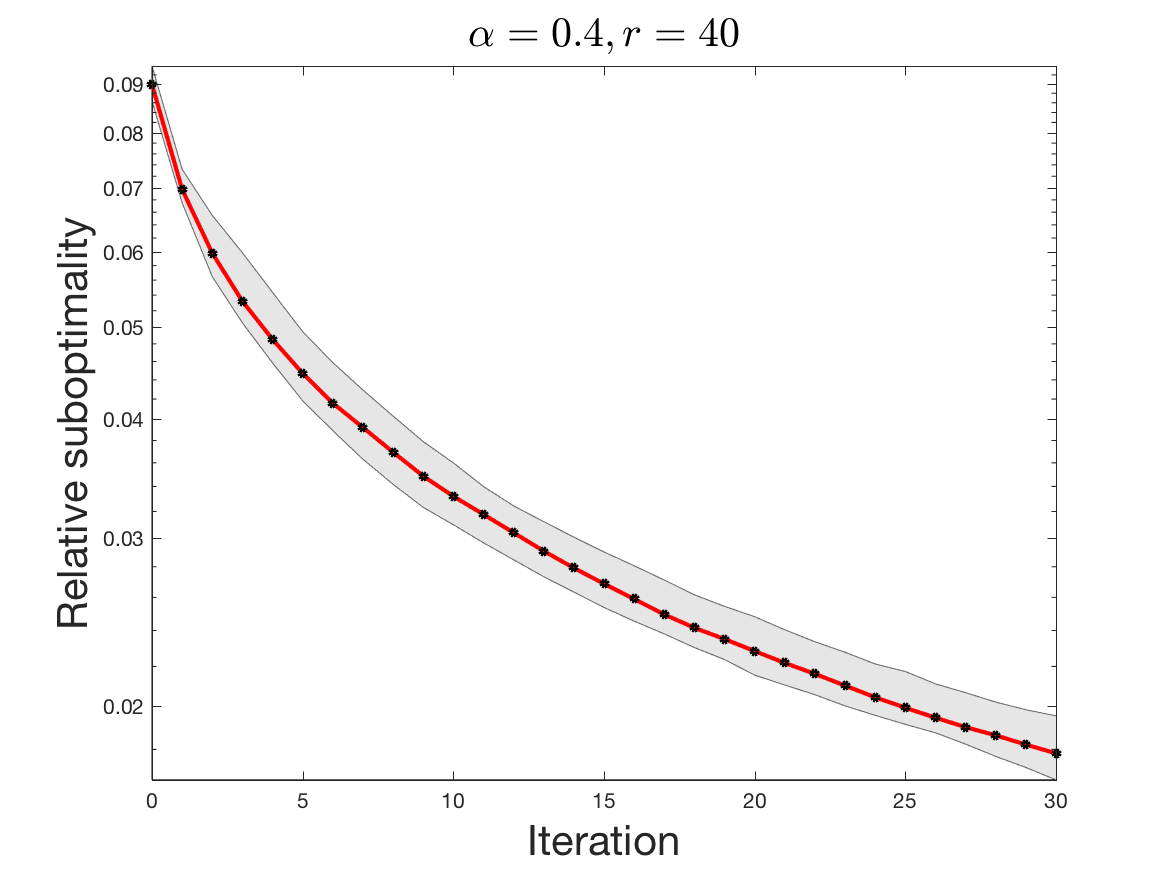}
        %\caption{ Residual vs. iteration  }\label{fig:bl_ex_flops}
\end{minipage}%
\caption{Sensivity of Algorithm~\ref{rpca_algo} to initialization. The best, the worst, and the median case are plotted for each iteration.}\label{fig:initial}
\end{figure}

\subsection{Algorithm~\ref{rmc_algo}: The effect of the number of observable entries on convergence}\label{sec:init2}

In this section, we study convergence properties of Algorithm~\ref{rmc_algo}. For different choices of $\alpha$ and $r$, Figure~\ref{fig:omega} shows how fast does Algorithm~\ref{rmc_algo} converge to the optimum. We observe  extremely fast convergence for both small ($<0.1$) and large ($\approx 1$) fraction of observable entries. However, for medium fractions of observable entries, Algorithm~\ref{rmc_algo} seems to often do not converge. This is an interesting phenomenon that could be studied more deeply in future research. However, for example, as Figure~\ref{FP_rmc} shows, we demonstrate that Algorithm~\ref{rmc_algo} still outperforms the other methods even for the critical medium sized $\Omega$.
\begin{figure}
\centering
\begin{minipage}{0.25\textwidth}
  \centering
\includegraphics[width =  \textwidth ]{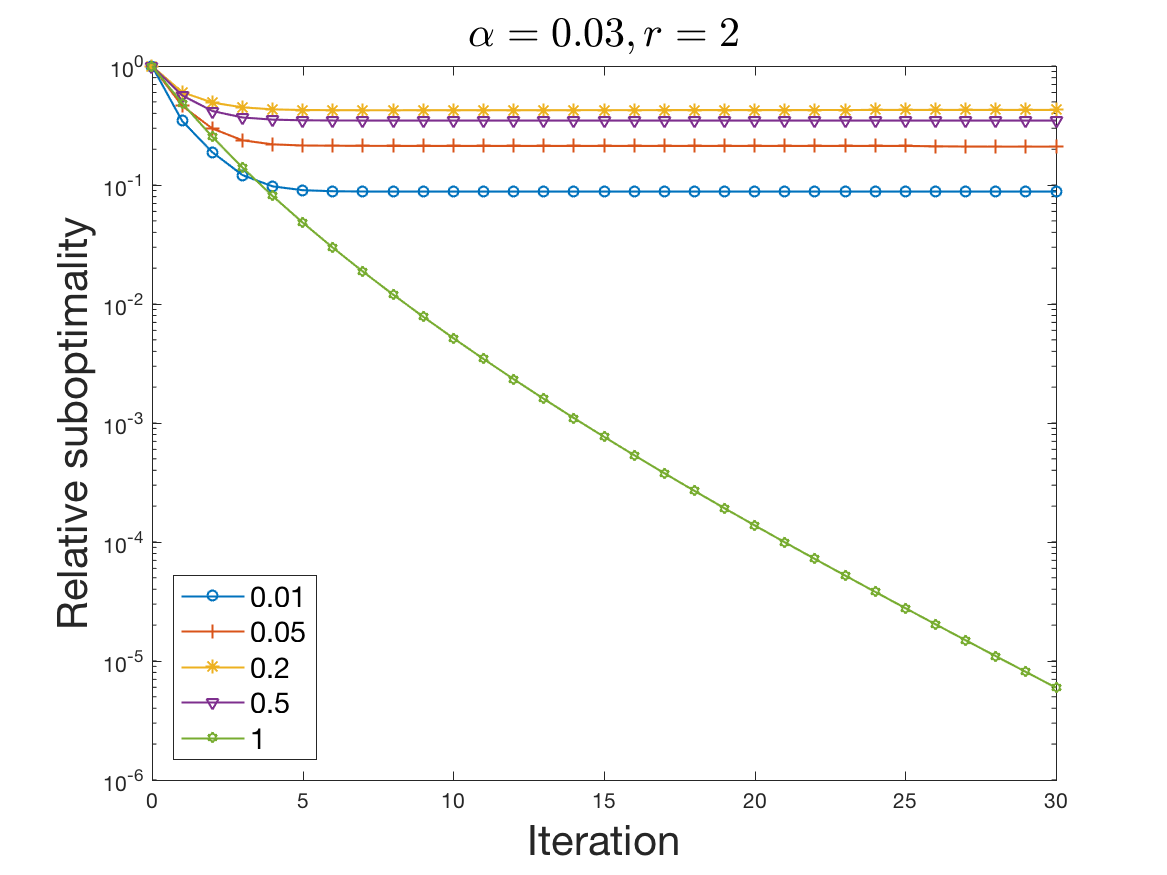}
        %\caption{ Residual vs. iteration  }\label{fig:bl_ex_flops}
\end{minipage}%
\begin{minipage}{0.25\textwidth}
  \centering
\includegraphics[width =  \textwidth ]{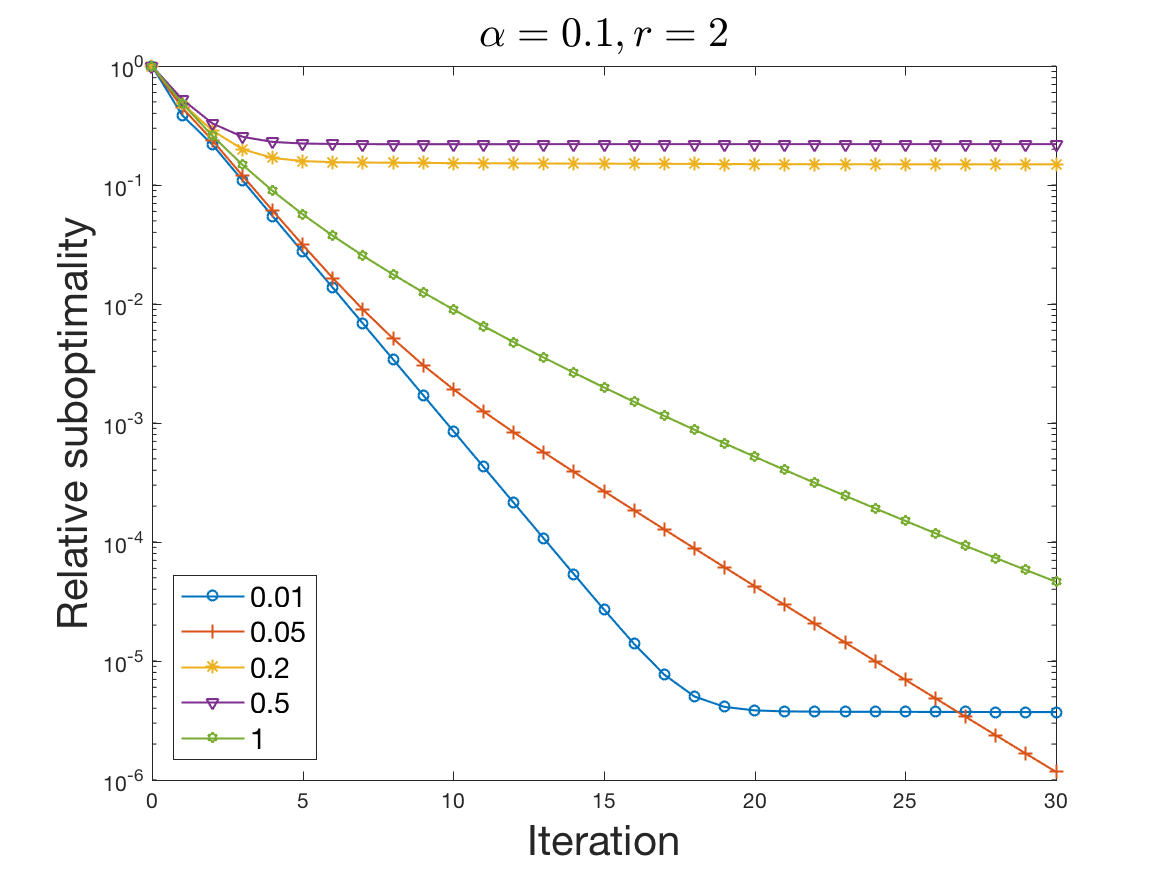}
        %\caption{ Residual vs. iteration  }\label{fig:bl_ex_flops}
\end{minipage}%
\begin{minipage}{0.25\textwidth}
  \centering
\includegraphics[width =  \textwidth ]{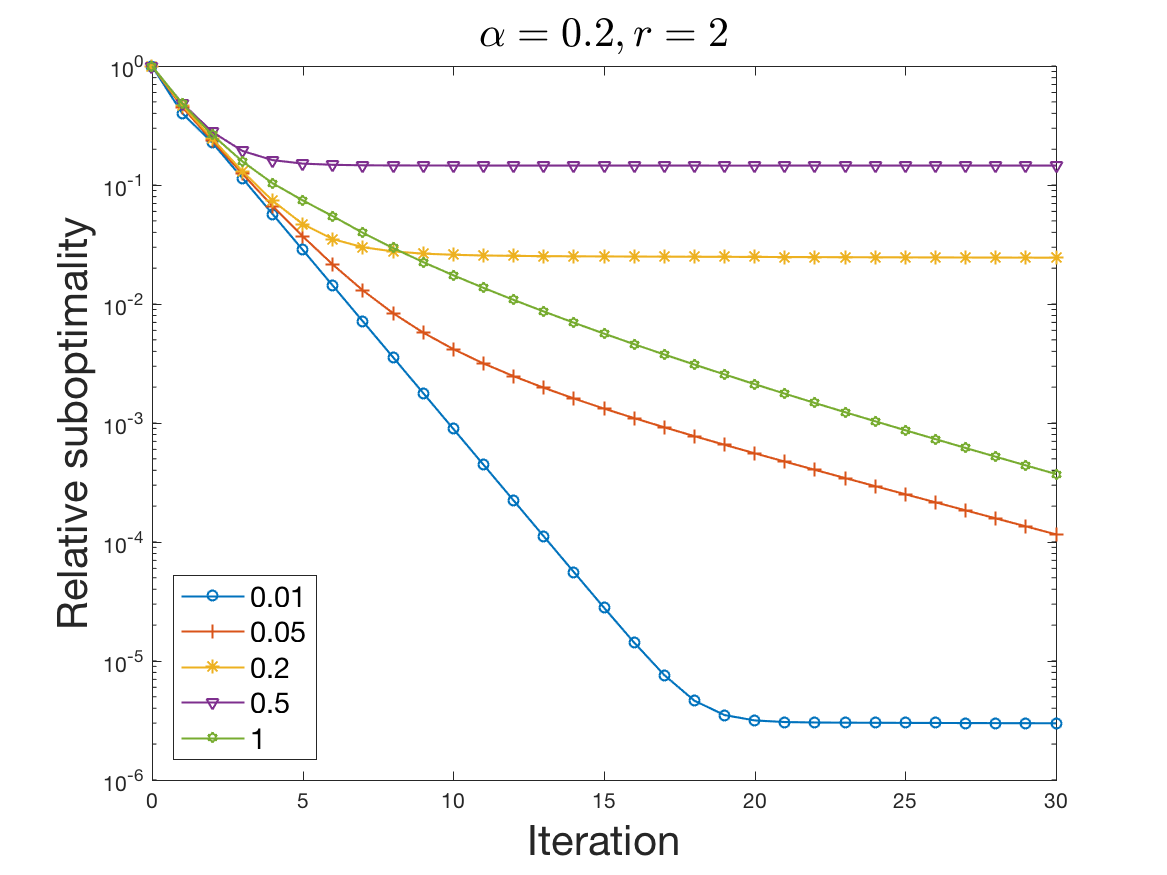}
        %\caption{ Residual vs. iteration  }\label{fig:bl_ex_flops}
\end{minipage}%
\begin{minipage}{0.25\textwidth}
  \centering
\includegraphics[width =  \textwidth ]{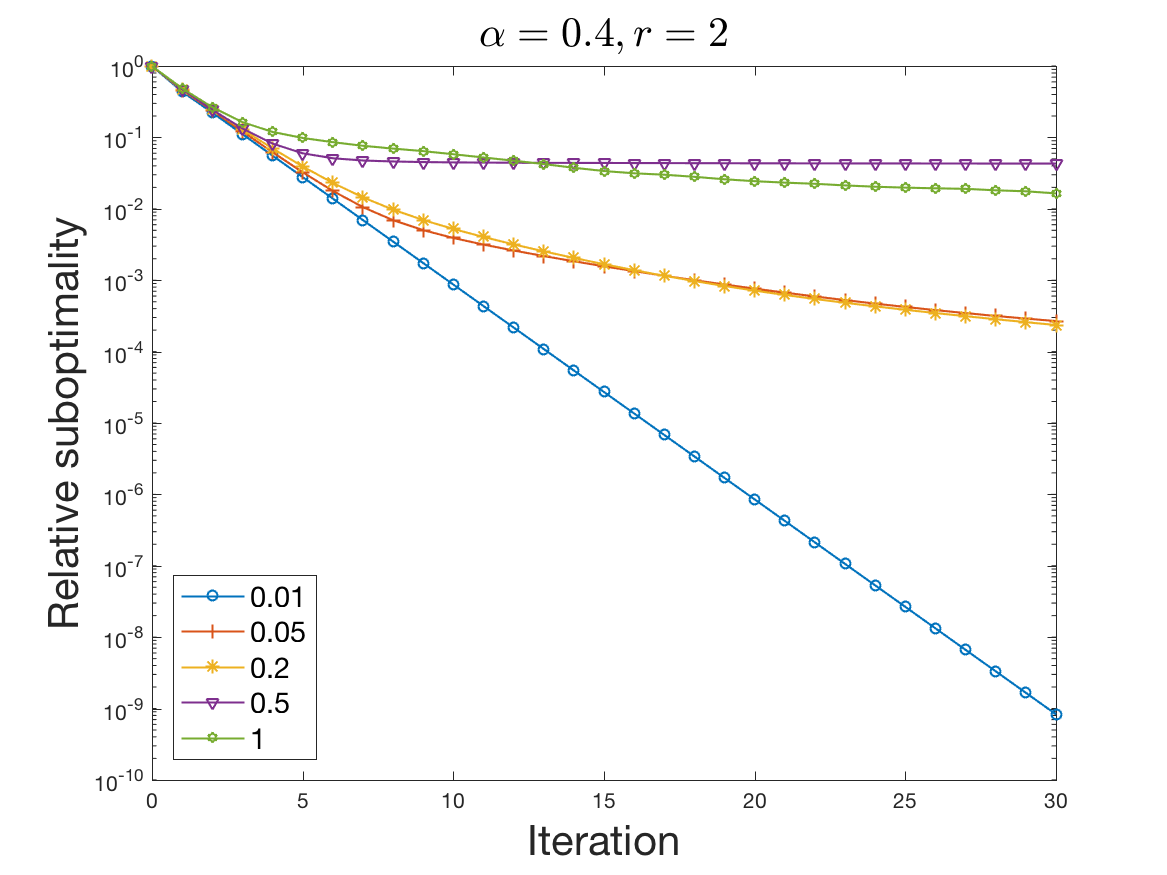}
        %\caption{ Residual vs. iteration  }\label{fig:bl_ex_flops}
\end{minipage}%
\\
\begin{minipage}{0.25\textwidth}
  \centering
\includegraphics[width =  \textwidth ]{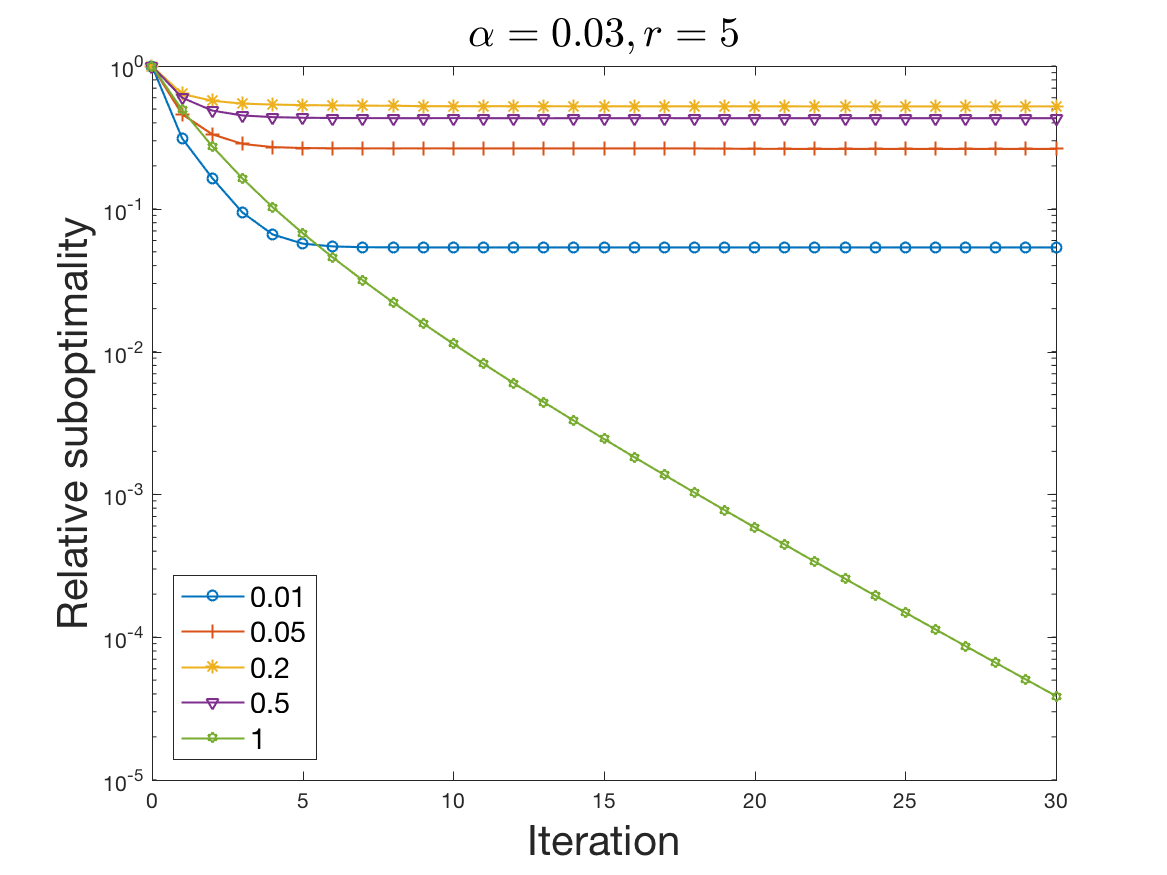}
        %\caption{ Residual vs. iteration  }\label{fig:bl_ex_flops}
\end{minipage}%
\begin{minipage}{0.25\textwidth}
  \centering
\includegraphics[width =  \textwidth ]{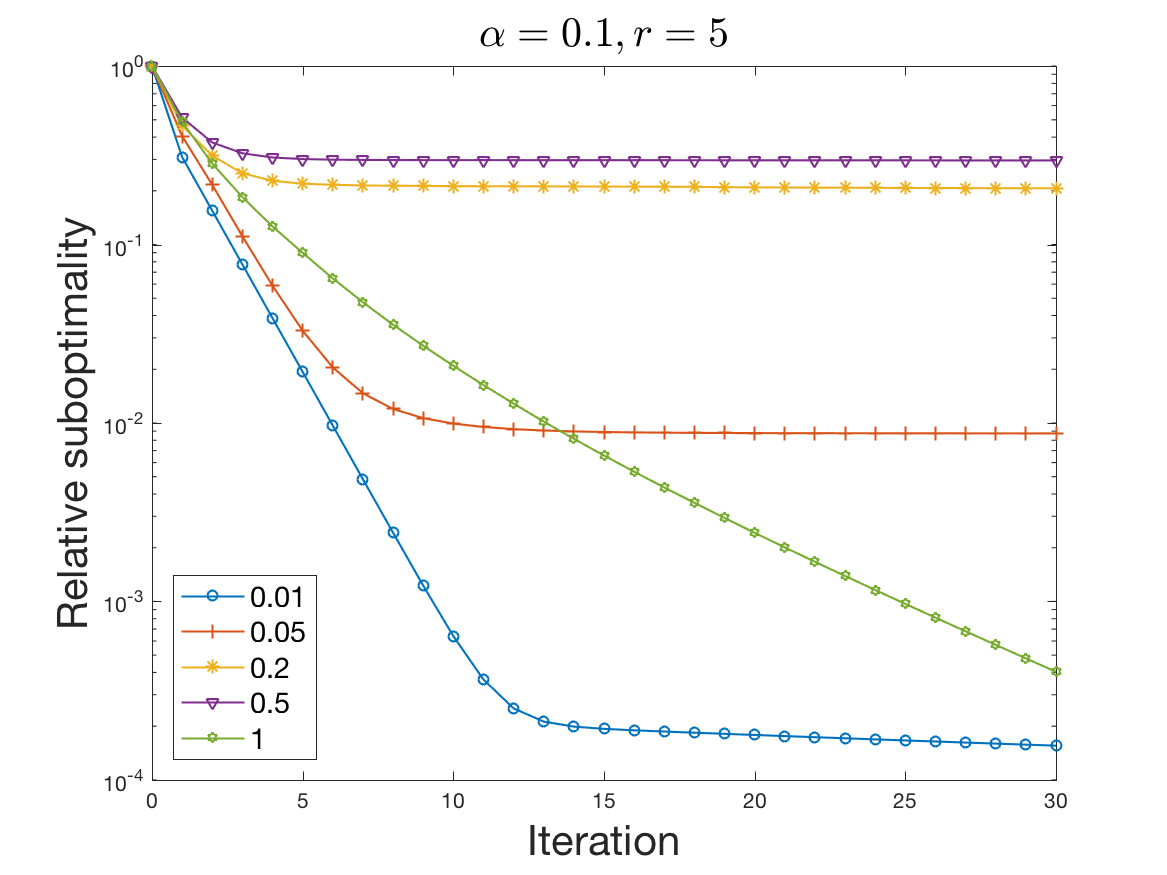}
        %\caption{ Residual vs. iteration  }\label{fig:bl_ex_flops}
\end{minipage}%
\begin{minipage}{0.25\textwidth}
  \centering
\includegraphics[width =  \textwidth ]{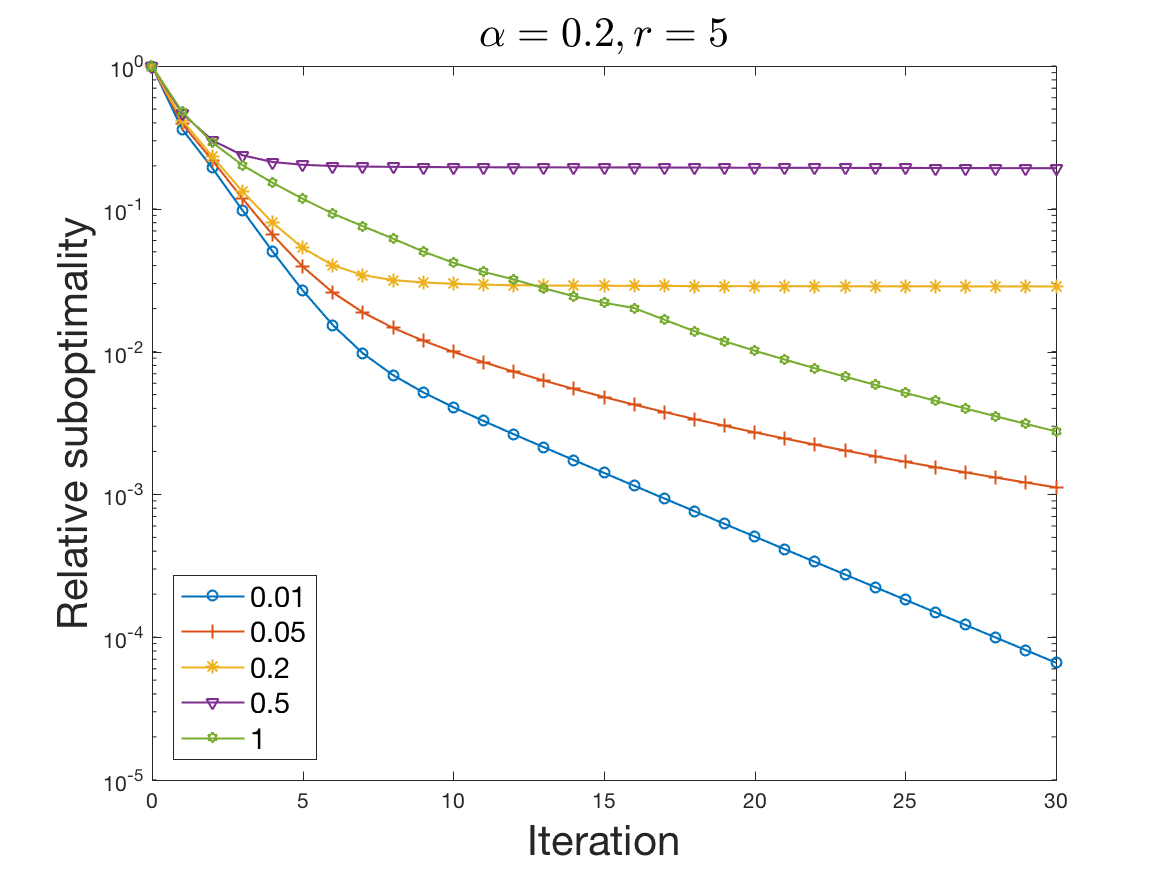}
        %\caption{ Residual vs. iteration  }\label{fig:bl_ex_flops}
\end{minipage}%
\begin{minipage}{0.25\textwidth}
  \centering
\includegraphics[width =  \textwidth ]{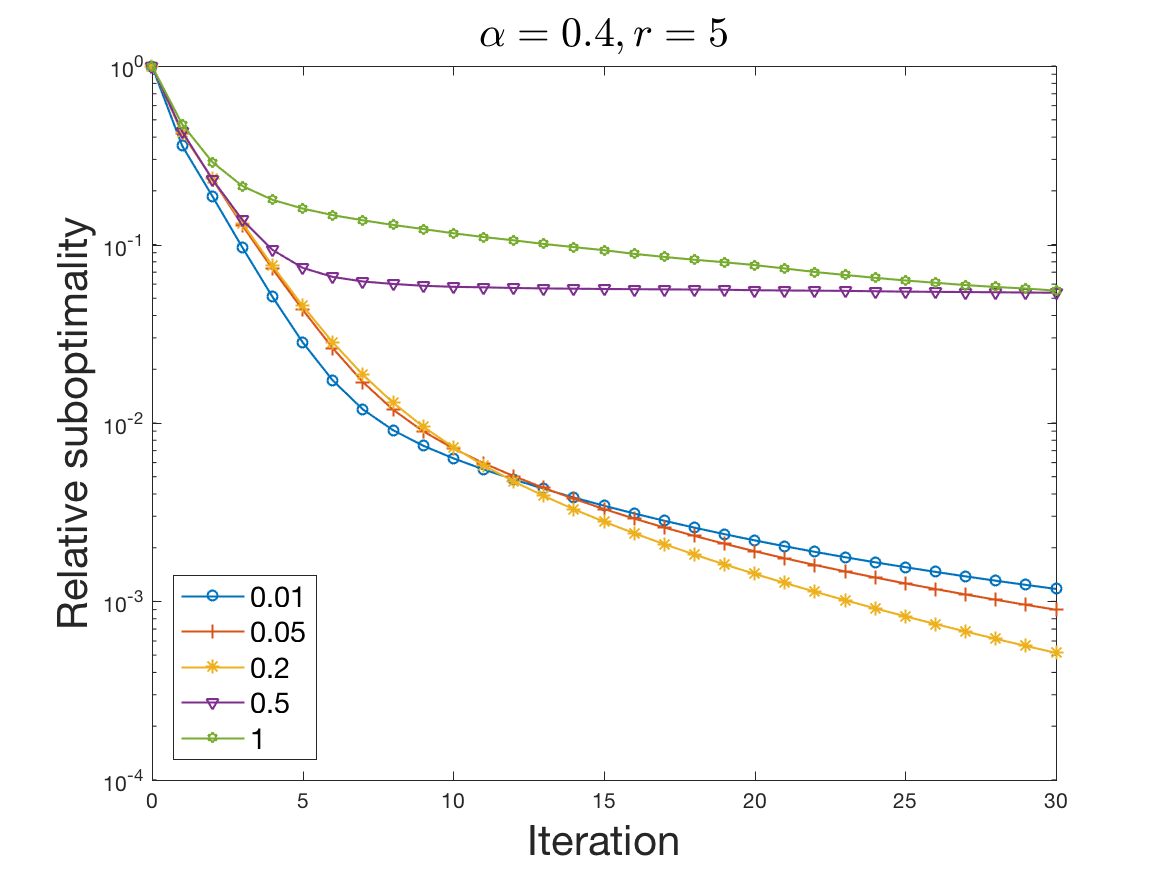}
        %\caption{ Residual vs. iteration  }\label{fig:bl_ex_flops}
\end{minipage}%
\\
\begin{minipage}{0.25\textwidth}
  \centering
\includegraphics[width =  \textwidth ]{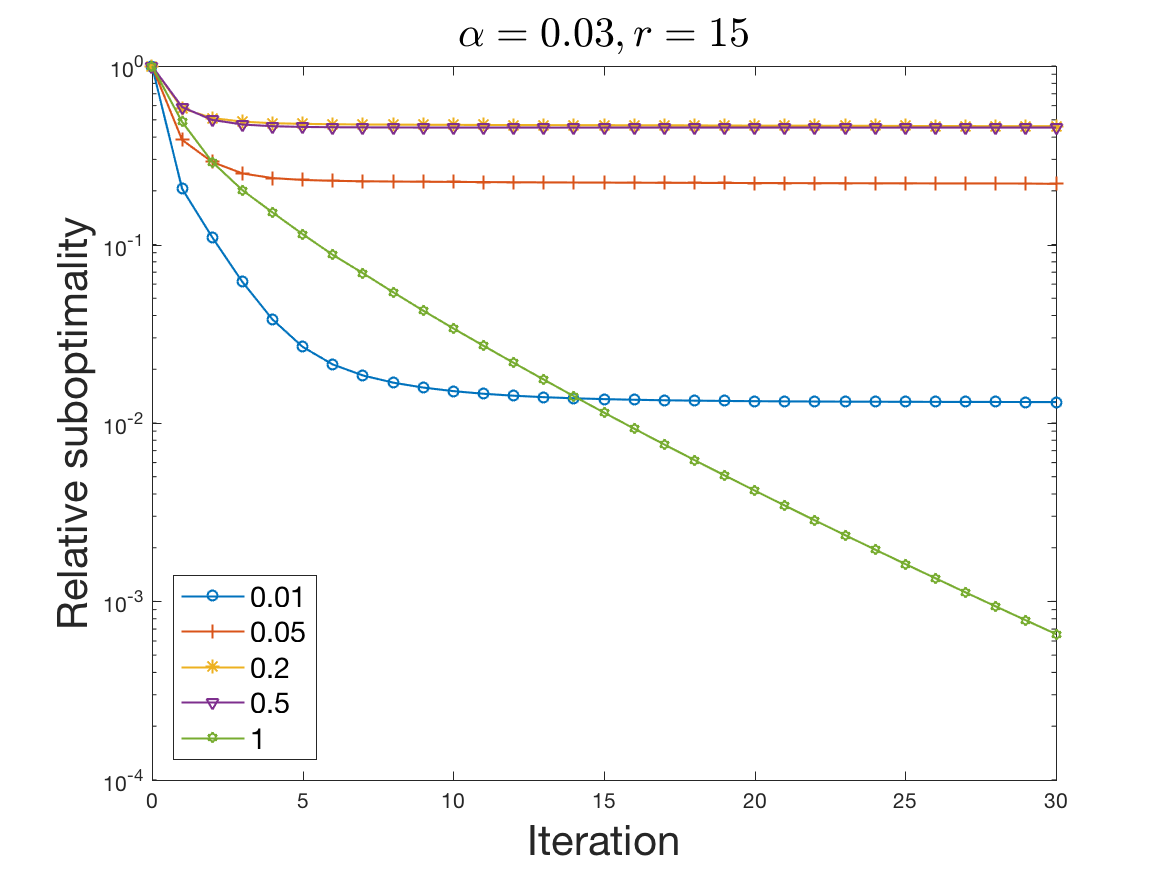}
        %\caption{ Residual vs. iteration  }\label{fig:bl_ex_flops}
\end{minipage}%
\begin{minipage}{0.25\textwidth}
  \centering
\includegraphics[width =  \textwidth ]{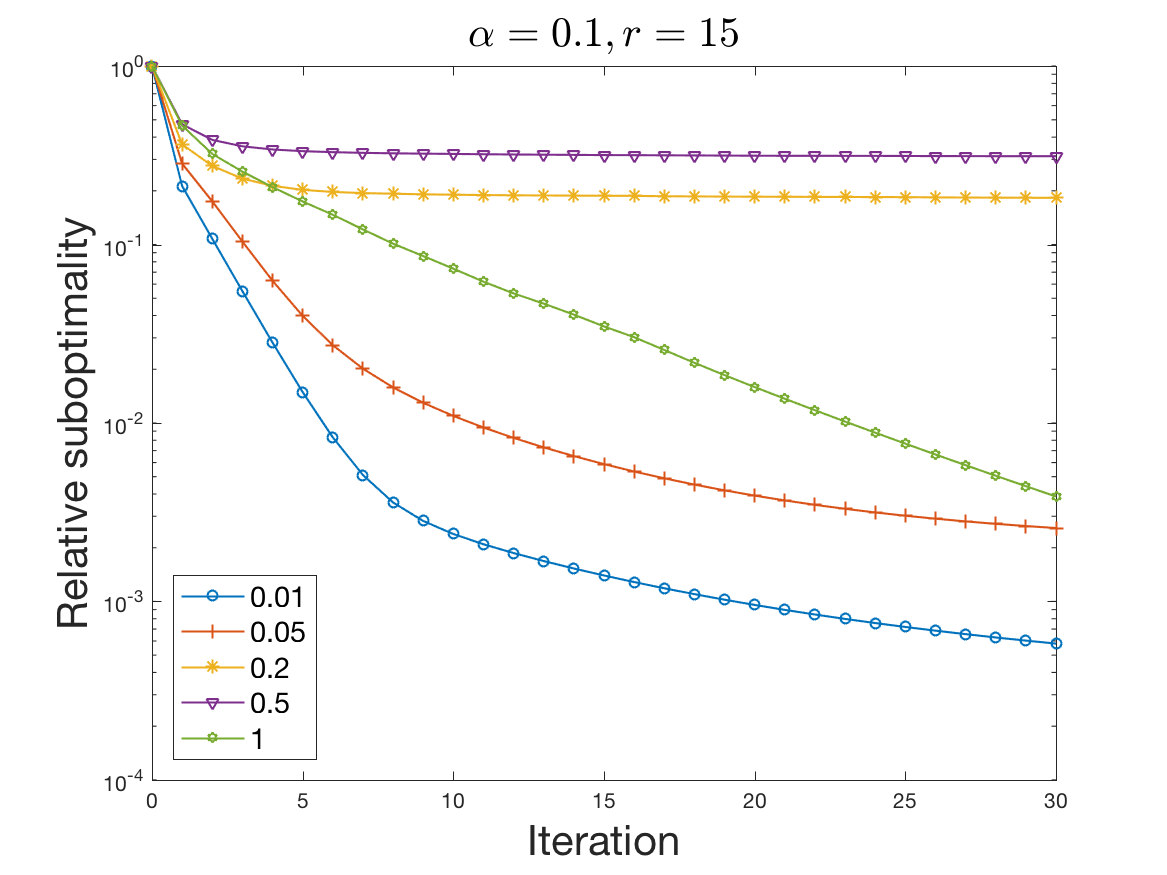}
        %\caption{ Residual vs. iteration  }\label{fig:bl_ex_flops}
\end{minipage}%
\begin{minipage}{0.25\textwidth}
  \centering
\includegraphics[width =  \textwidth ]{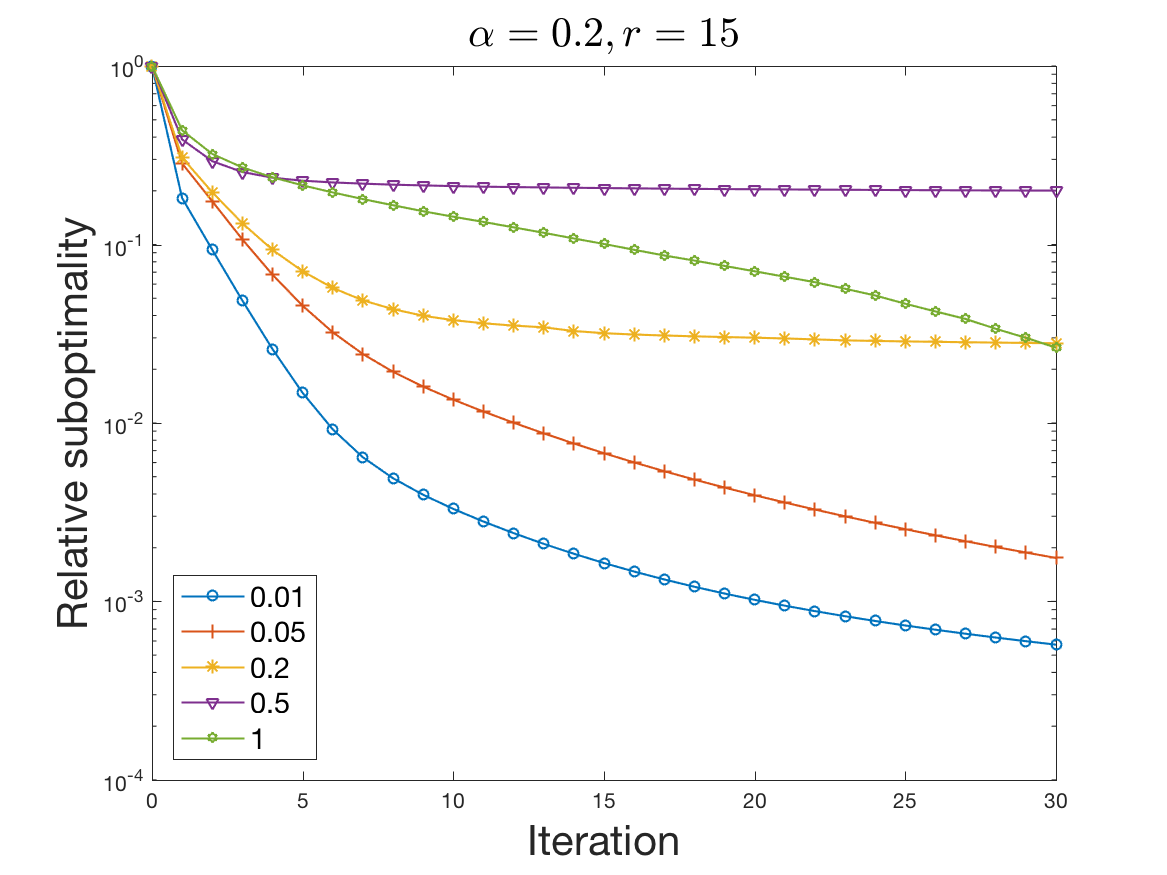}
        %\caption{ Residual vs. iteration  }\label{fig:bl_ex_flops}
\end{minipage}%
\begin{minipage}{0.25\textwidth}
  \centering
\includegraphics[width =  \textwidth ]{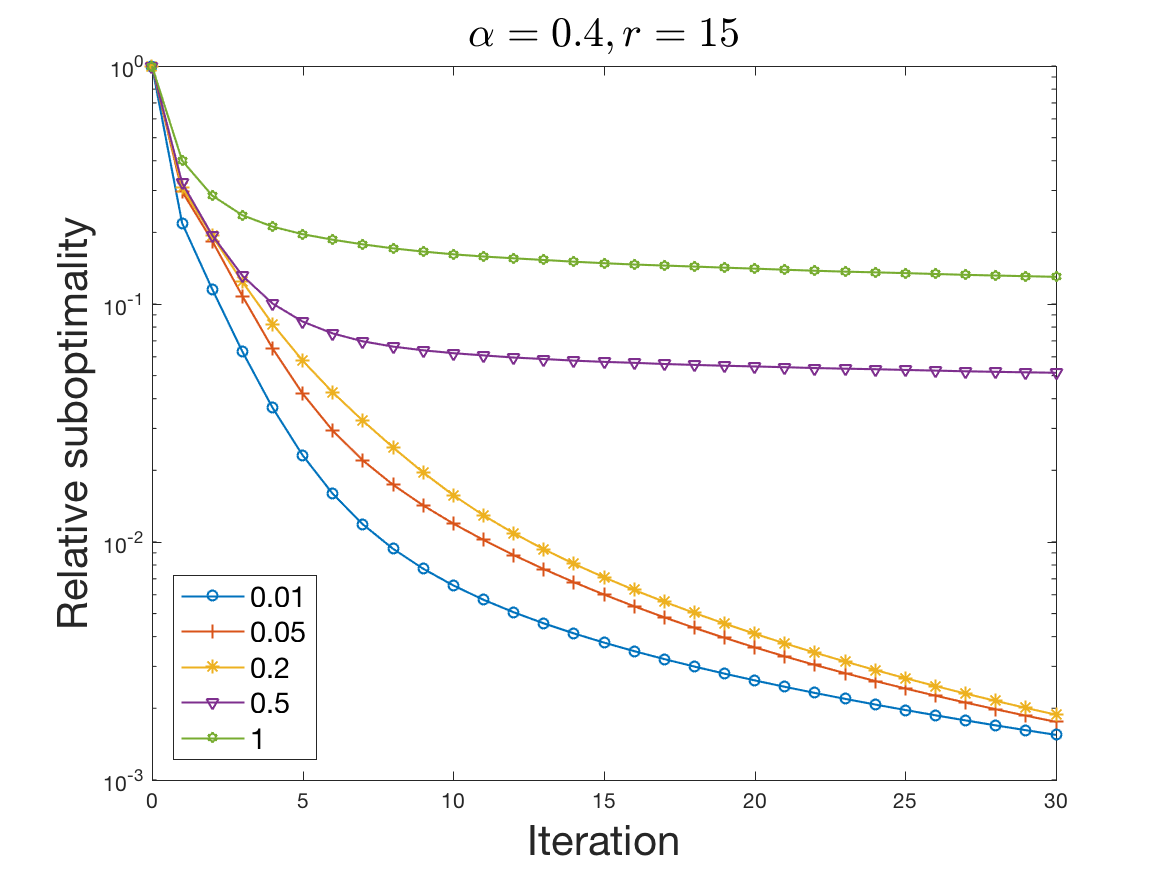}
        %\caption{ Residual vs. iteration  }\label{fig:bl_ex_flops}
\end{minipage}%
\\
\begin{minipage}{0.25\textwidth}
  \centering
\includegraphics[width =  \textwidth ]{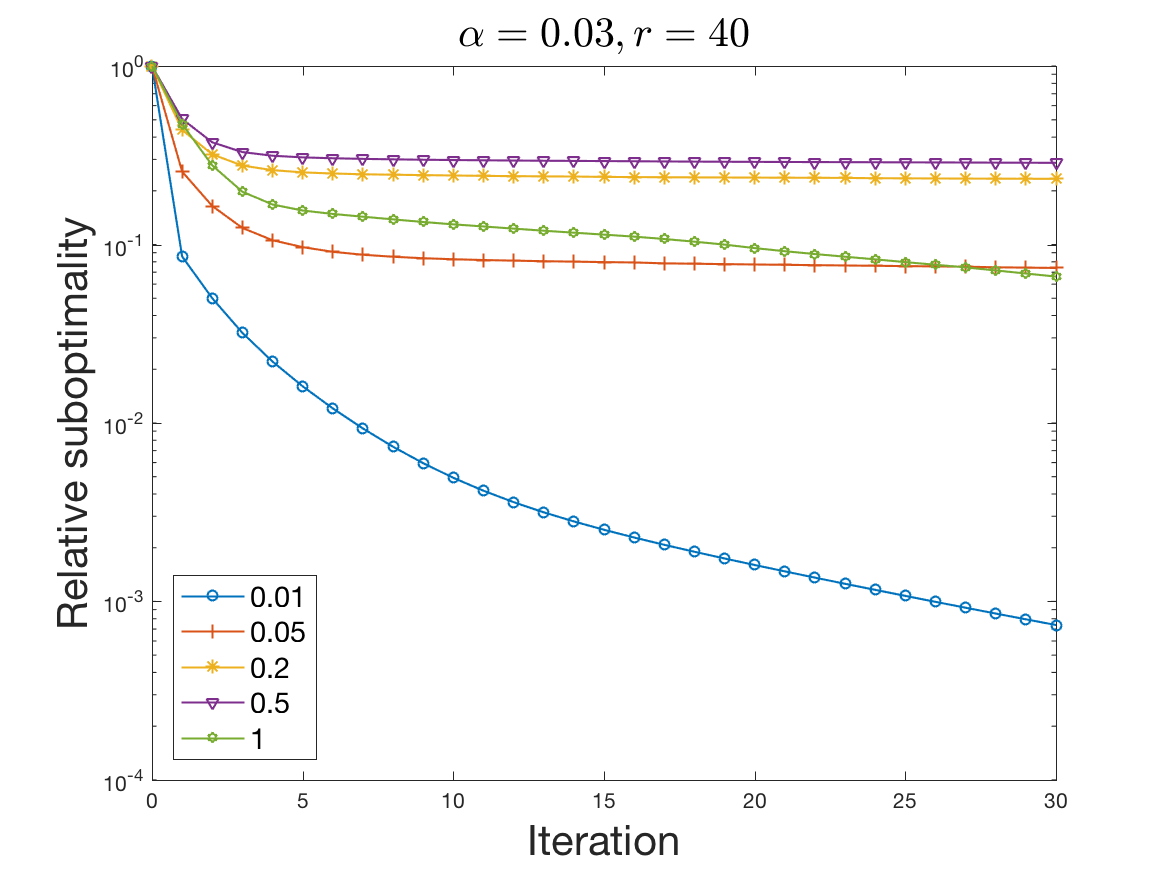}
        %\caption{ Residual vs. iteration  }\label{fig:bl_ex_flops}
\end{minipage}%
\begin{minipage}{0.25\textwidth}
  \centering
\includegraphics[width =  \textwidth ]{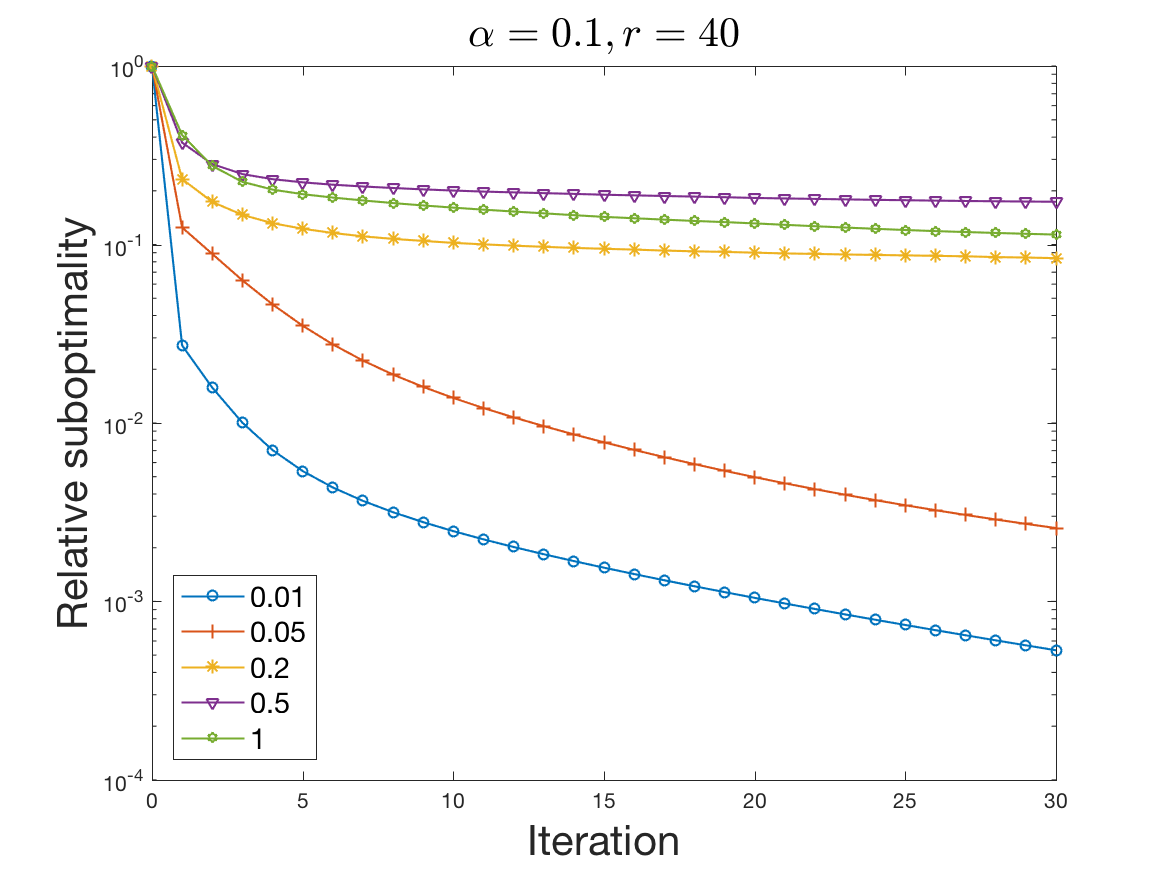}
        %\caption{ Residual vs. iteration  }\label{fig:bl_ex_flops}
\end{minipage}%
\begin{minipage}{0.25\textwidth}
  \centering
\includegraphics[width =  \textwidth ]{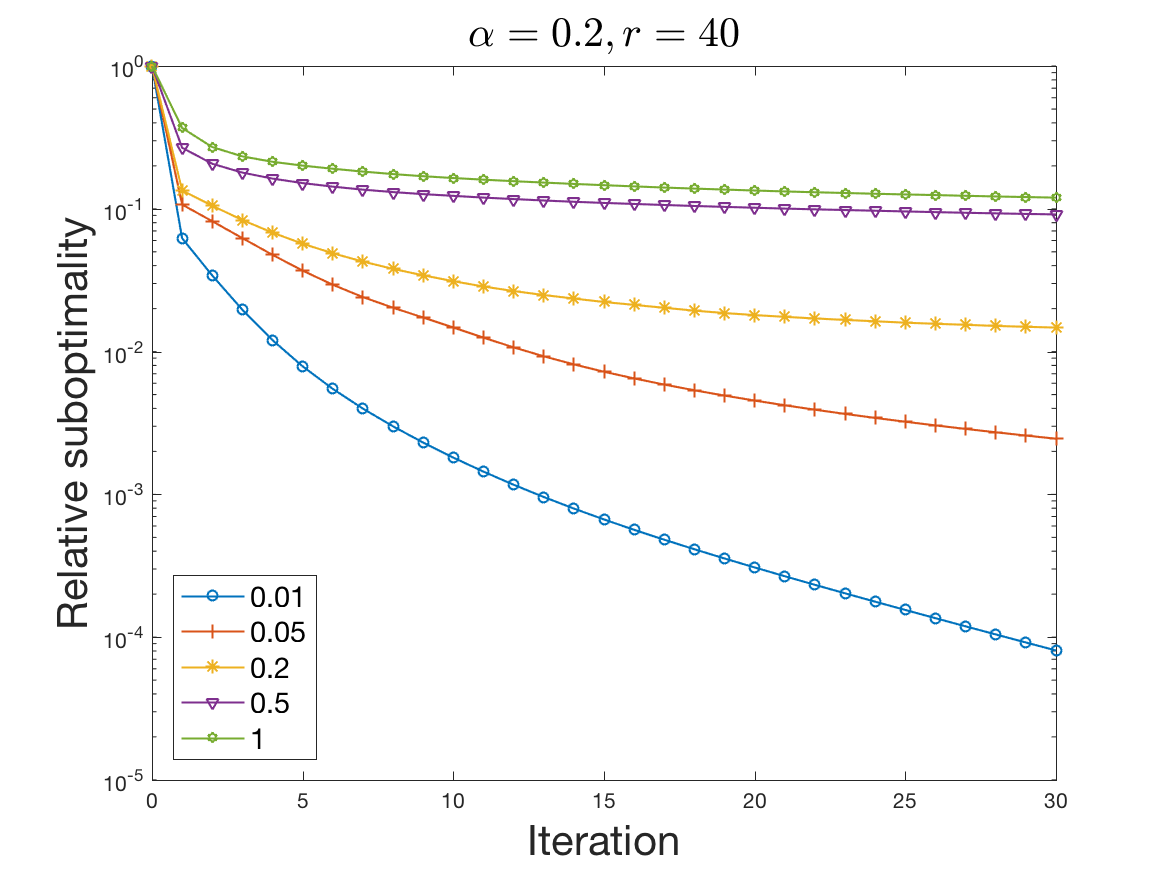}
        %\caption{ Residual vs. iteration  }\label{fig:bl_ex_flops}
\end{minipage}%
\begin{minipage}{0.25\textwidth}
  \centering
\includegraphics[width =  \textwidth ]{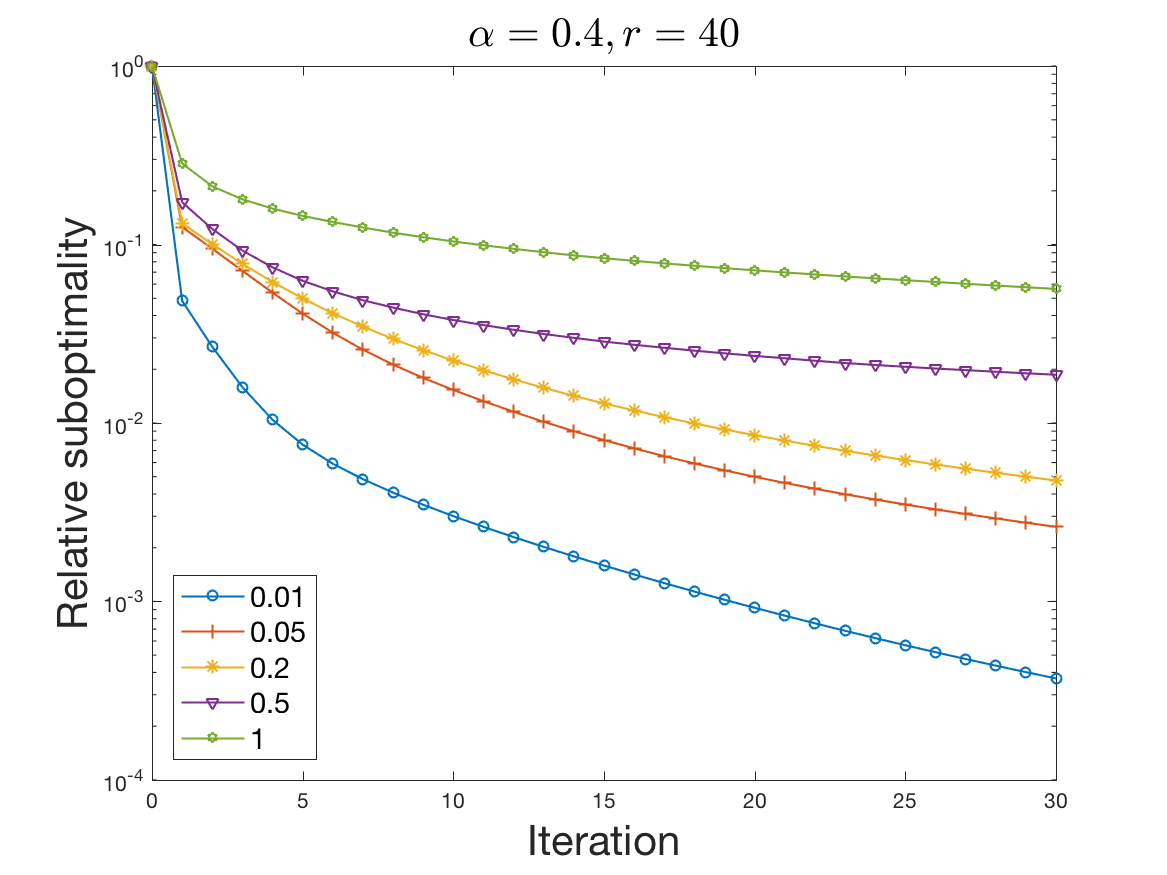}
        %\caption{ Residual vs. iteration  }\label{fig:bl_ex_flops}
\end{minipage}%
\caption{Convergence of Algorithm~\ref{rmc_algo} for several  values  of $\Omega$. Each line corresponds to a random percentage of the observable entries and shows normalized $\ell_2$ norm of $(A-L_k-S_k)_{\Omega}$. }\label{fig:omega}
\end{figure}

\subsection{Algorithm~\ref{rpca_algo}: Sensitivity to the choice of $\alpha$ and $r$ }\label{sec:init3}
In this section we study the sensitivity of Algorithm~\ref{rpca_algo} to the degree at which we choose the rank and sparsity level parameters correctly, compare with their true values at the optimum. We first generate a matrix $A$ as described in Section~\ref{sec:init} for a fixed choice of $\hat{\alpha}, \hat{r}$. We then run  Algorithm~\ref{rpca_algo} various choices of $\alpha,r$, including the correct choice. Figure~\ref{fig:sens} shows the results.  
If sparsity and rank levels ($\alpha$ and $r$) are set to be smaller than their true values at the optimum incorrectly, Algorithm~\ref{rpca_algo} does not converge (as in this case, $\cap \cX_i$ might not exist). Moreover, the  performance of the algorithm is sensitive to the choice of $r$, particularly so if we underestimate the true value (see Figure~\ref{fig:omega}). However, overestimating the parameters only leads to a slower convergence. 

\section{Proof of Lemma~\ref{sp} and Lemma~\ref{lem:rmc_linproj}}
We start with proof of Lemma~\ref{sp}.
Note that for $I\in\mathbb{R}^{mn\times mn}$, $L+S=A$ can be rewritten as
\[
\begin{pmatrix}I&I \end{pmatrix}
\begin{pmatrix}
L\\S 
\end{pmatrix}=A. 
\]
Define $x\eqdef \text{vec}\begin{pmatrix}L\\S \end{pmatrix}$ and $ \text{vec}(A)=a$. Therefore, the above is equivalent to
\[
\left( I \otimes \begin{pmatrix}I&I \end{pmatrix}\right)x=a,
\]
which is just a projection on a particular linear system. Recall that the projection of $x_0$ in the Frobenius norm (for the vectors it is equivalent to the $\ell_2$ norm) onto $\cA x=b$ is given as $x_0-\cA^\top (\cA \cA^\top)^\dagger (\cA x-b)$. Therefore,
\begin{eqnarray*}
x
&=&
x_0-\left( I \otimes \begin{pmatrix}I&I \end{pmatrix}\right)^\top
\left( \left( I \otimes \begin{pmatrix}I&I \end{pmatrix}\right)\left( I \otimes \begin{pmatrix}I&I \end{pmatrix}\right)^\top\right)^\dagger
\left(
\left( I \otimes \begin{pmatrix}I&I \end{pmatrix}\right)x-a\right)
\\
&=&
x_0-\frac12\left( I \otimes \begin{pmatrix}I&I \end{pmatrix}\right)^\top
\left( I \otimes I\right)
\left(
\left( I \otimes \begin{pmatrix}I&I \end{pmatrix}\right)x-a\right)
\\
&=&
x_0-\frac12\left( I \otimes \begin{pmatrix}I&I \\ I&I \end{pmatrix}\right)x + \frac12\left( I \otimes \begin{pmatrix}I&I \end{pmatrix}\right)^\top a
\\
&=&
\text{vec}\left( \begin{pmatrix}L_0\\S_0 \end{pmatrix} - \frac12 
\begin{pmatrix}L_0+S_0 \\L_0+S_0 \end{pmatrix}+\frac12 
\begin{pmatrix}A \\A\end{pmatrix} \right) ,
\end{eqnarray*}
which is equivalent to $L^*=\frac12 (L_0-S_0+A)$ and $S^*=\frac12 (S_0-L_0+A)$. Therefore, Lemma~\ref{sp} is established. 

To get Lemma~\ref{lem:rmc_linproj}, it remains to note that the problem is coordinate wise separable. Therefore, the solution behaves as in Lemma~\ref{sp}. on set $\Omega$, otherwise the coordinates of $L,S$ remain unchanged.

\begin{figure}
\centering
\begin{minipage}{0.25\textwidth}
  \centering
\includegraphics[width =  \textwidth ]{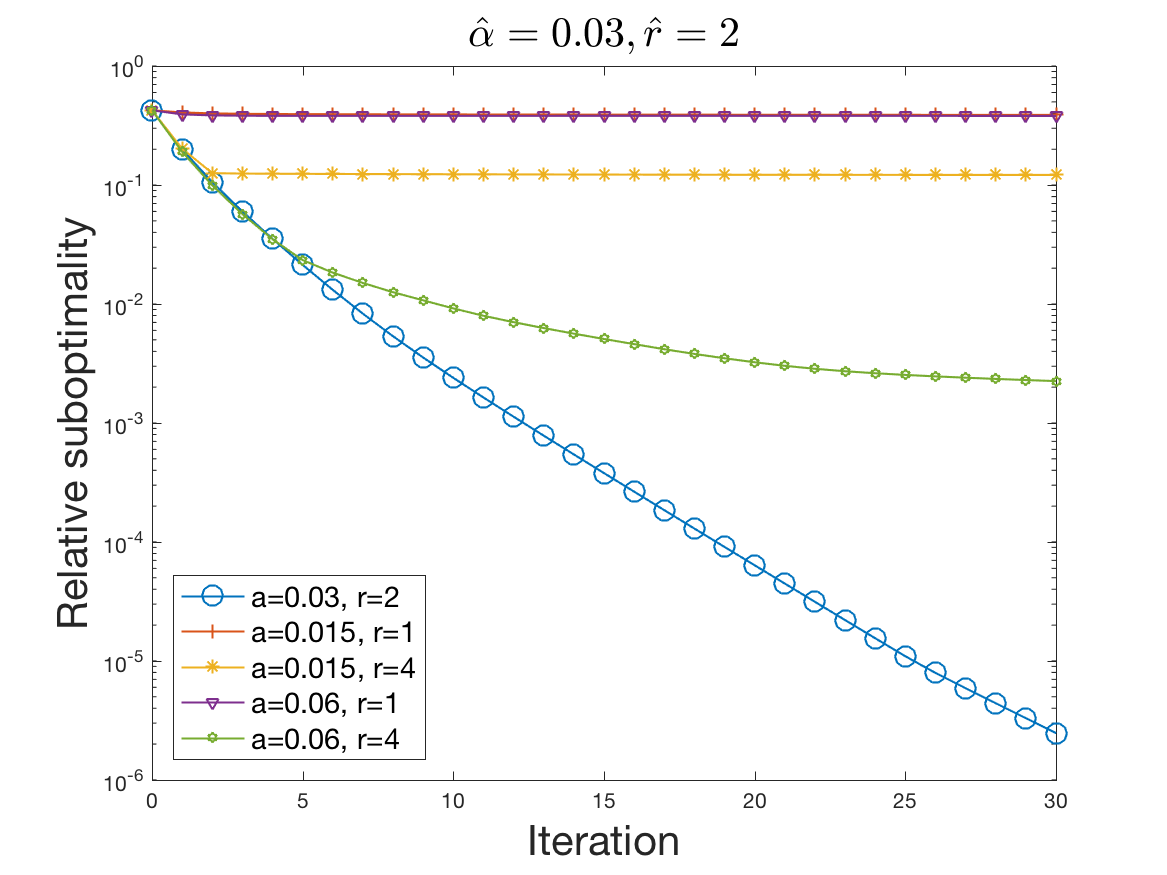}
        %\caption{ Residual vs. iteration  }\label{fig:bl_ex_flops}
\end{minipage}%
\begin{minipage}{0.25\textwidth}
  \centering
\includegraphics[width =  \textwidth ]{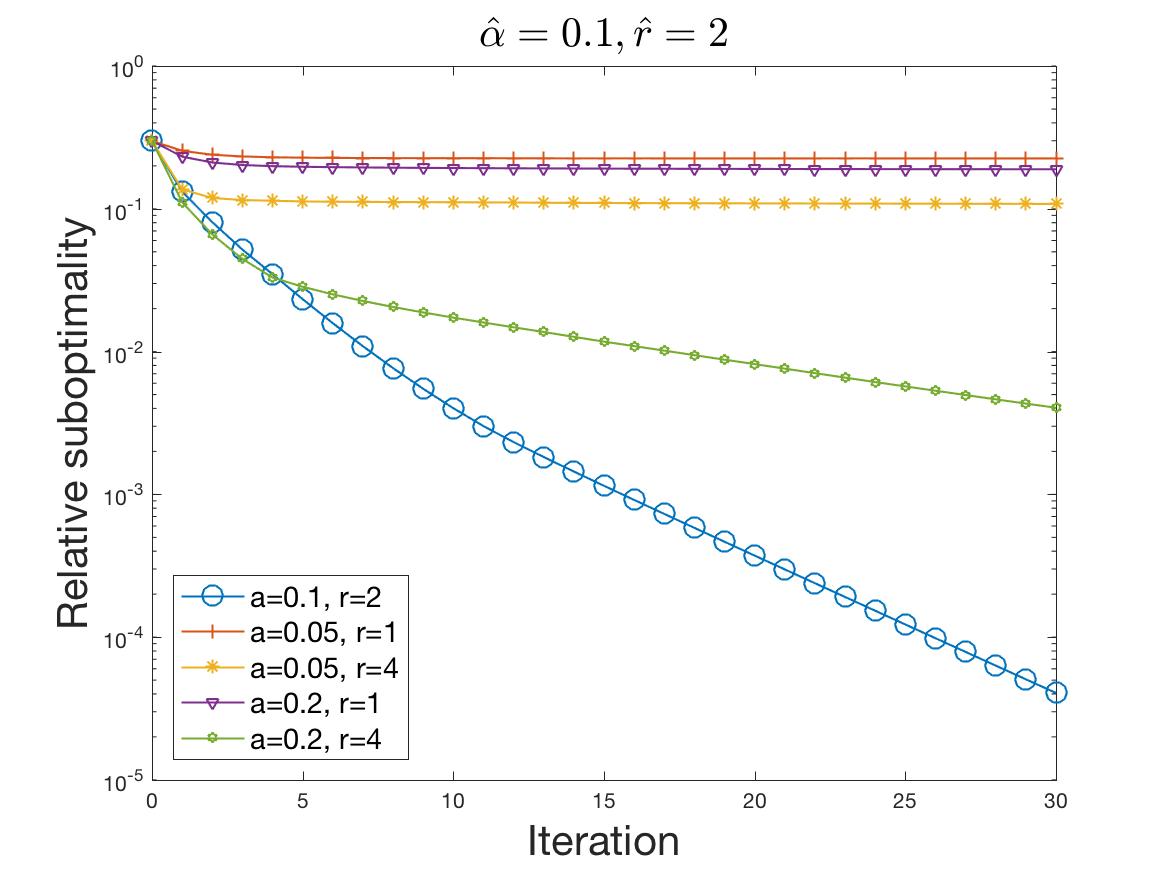}
        %\caption{ Residual vs. iteration  }\label{fig:bl_ex_flops}
\end{minipage}%
\begin{minipage}{0.25\textwidth}
  \centering
\includegraphics[width =  \textwidth ]{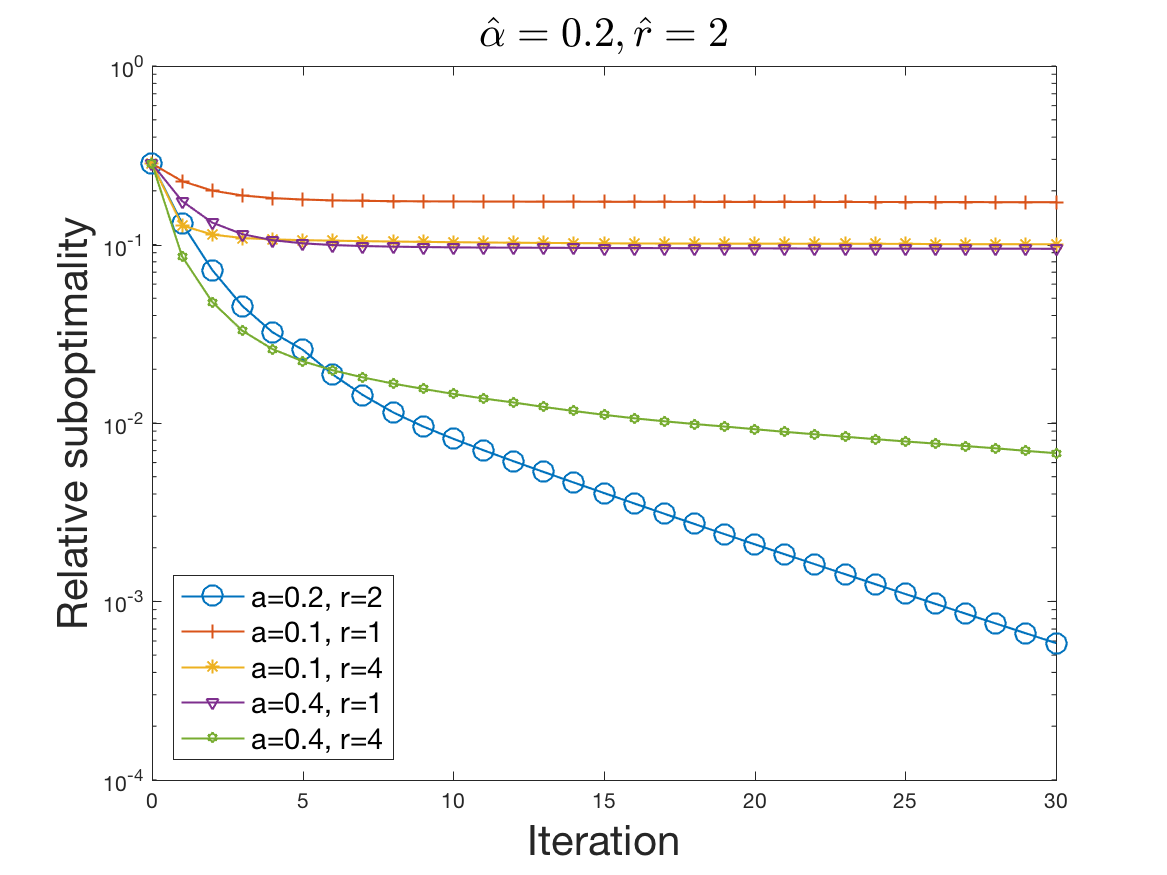}
        %\caption{ Residual vs. iteration  }\label{fig:bl_ex_flops}
\end{minipage}%
\begin{minipage}{0.25\textwidth}
  \centering
\includegraphics[width =  \textwidth ]{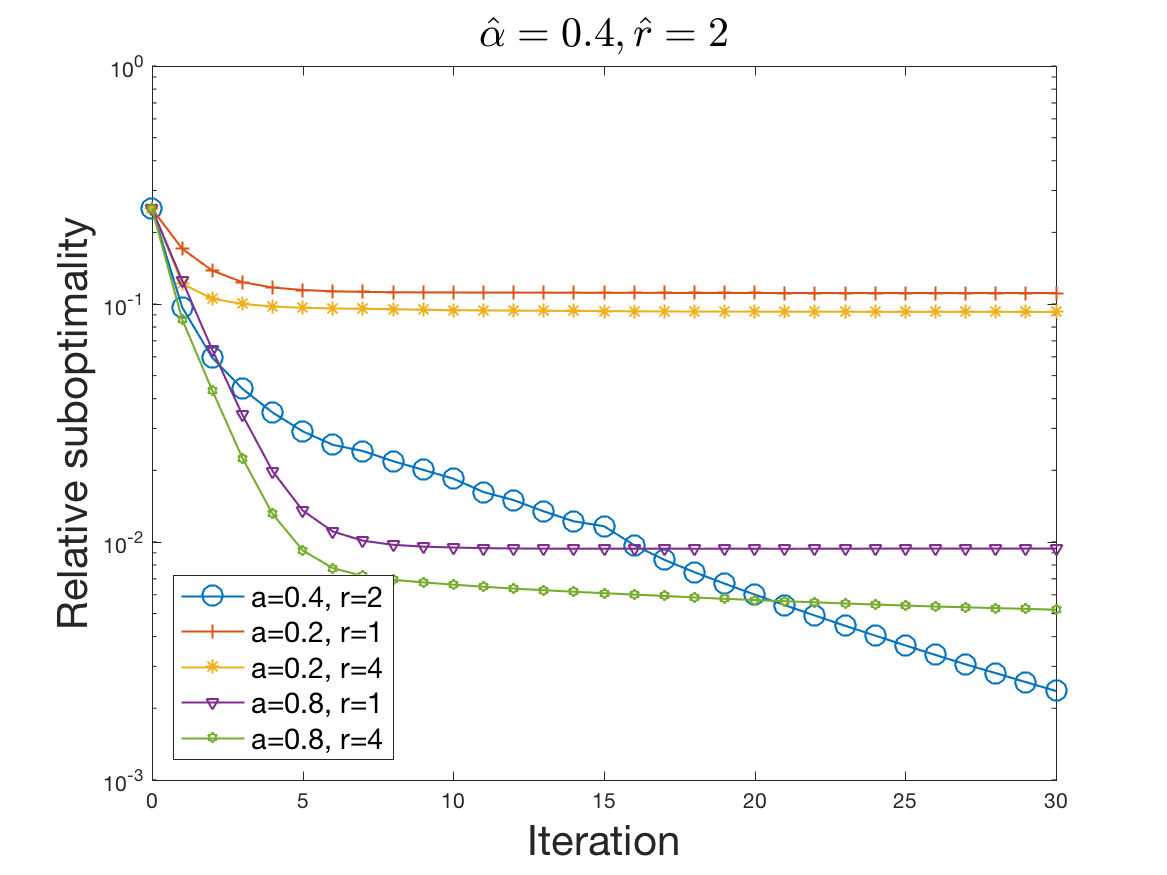}
        %\caption{ Residual vs. iteration  }\label{fig:bl_ex_flops}
\end{minipage}%
\\
\begin{minipage}{0.25\textwidth}
  \centering
\includegraphics[width =  \textwidth ]{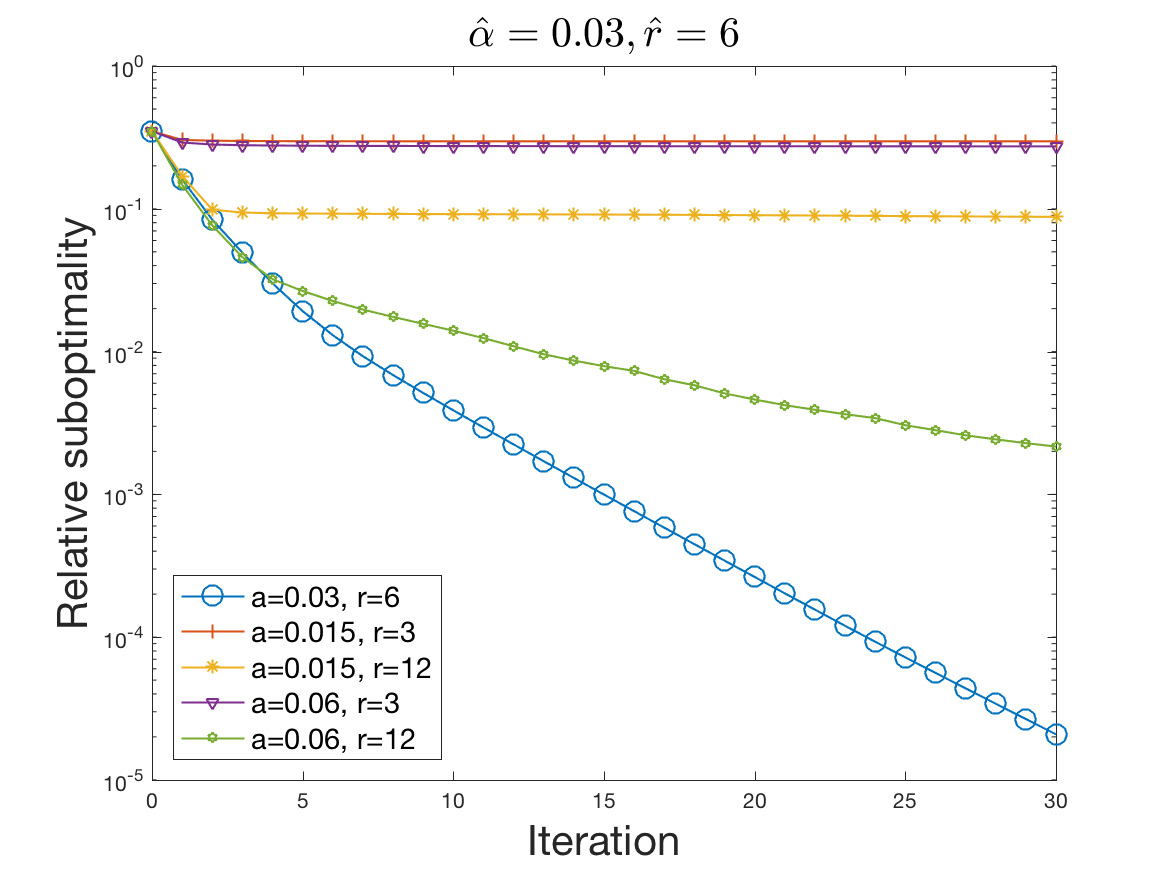}
        %\caption{ Residual vs. iteration  }\label{fig:bl_ex_flops}
\end{minipage}%
\begin{minipage}{0.25\textwidth}
  \centering
\includegraphics[width =  \textwidth ]{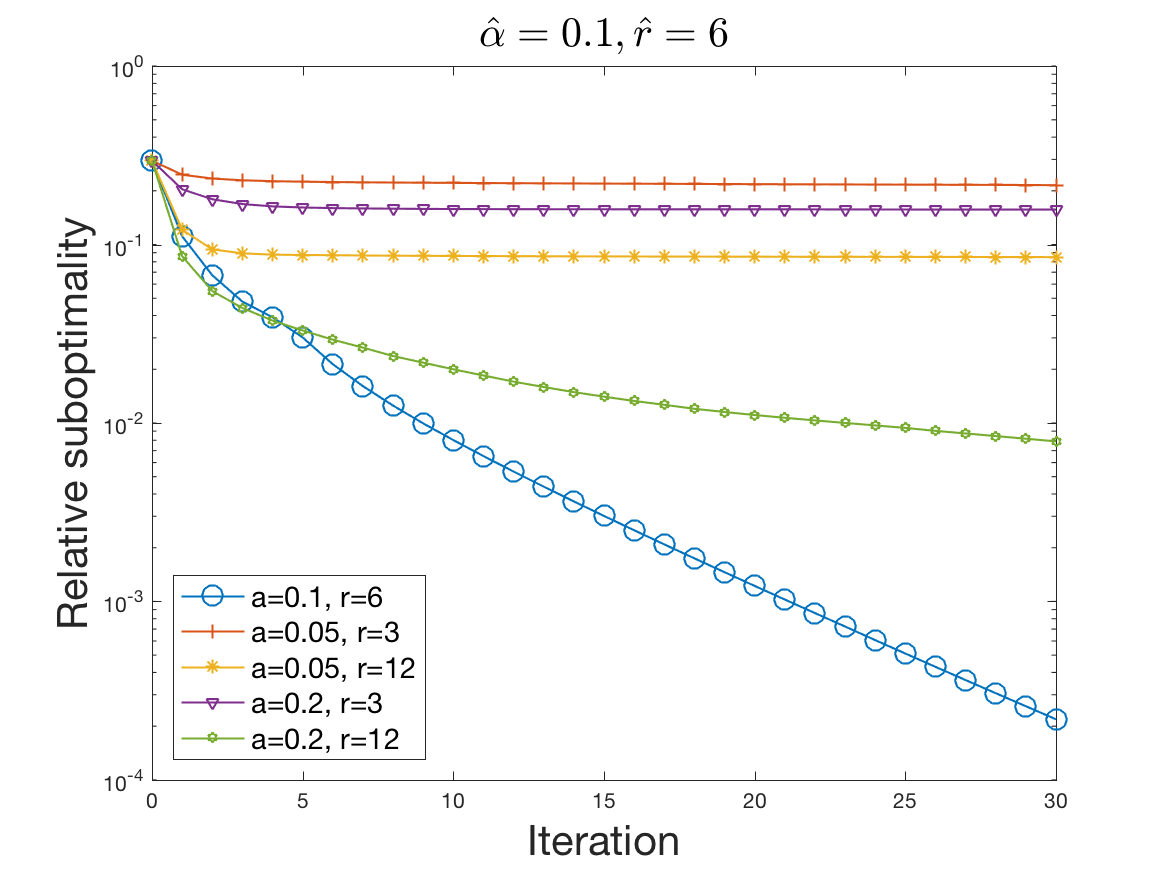}
        %\caption{ Residual vs. iteration  }\label{fig:bl_ex_flops}
\end{minipage}%
\begin{minipage}{0.25\textwidth}
  \centering
\includegraphics[width =  \textwidth ]{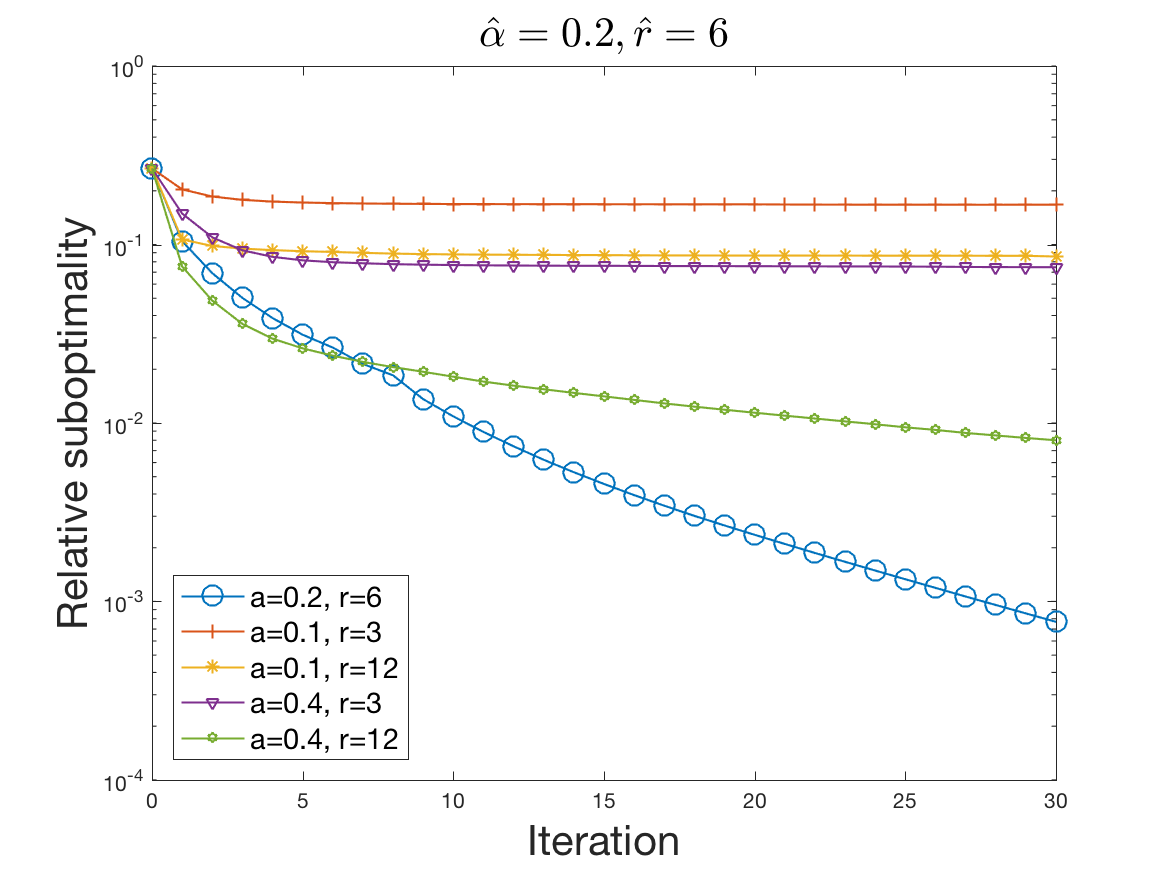}
        %\caption{ Residual vs. iteration  }\label{fig:bl_ex_flops}
\end{minipage}%
\begin{minipage}{0.25\textwidth}
  \centering
\includegraphics[width =  \textwidth ]{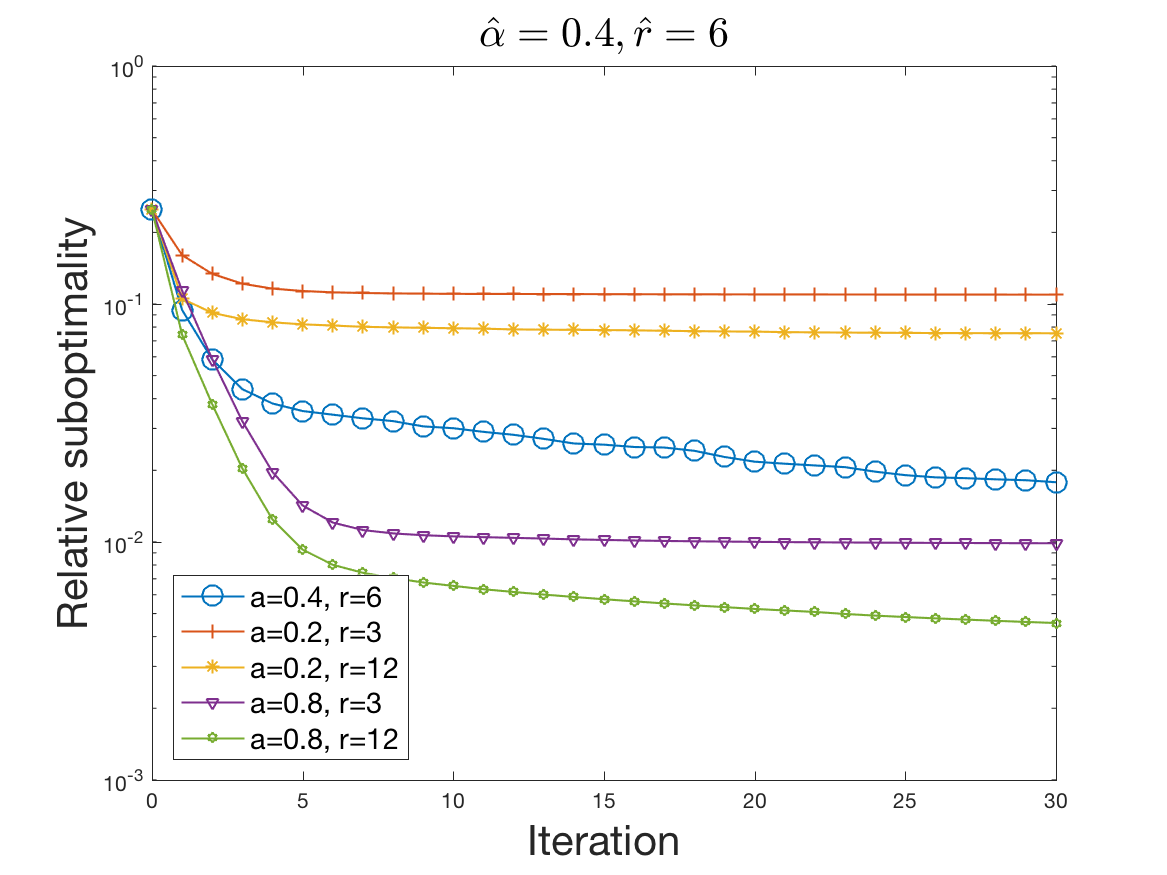}
        %\caption{ Residual vs. iteration  }\label{fig:bl_ex_flops}
\end{minipage}%
\\
\begin{minipage}{0.25\textwidth}
  \centering
\includegraphics[width =  \textwidth ]{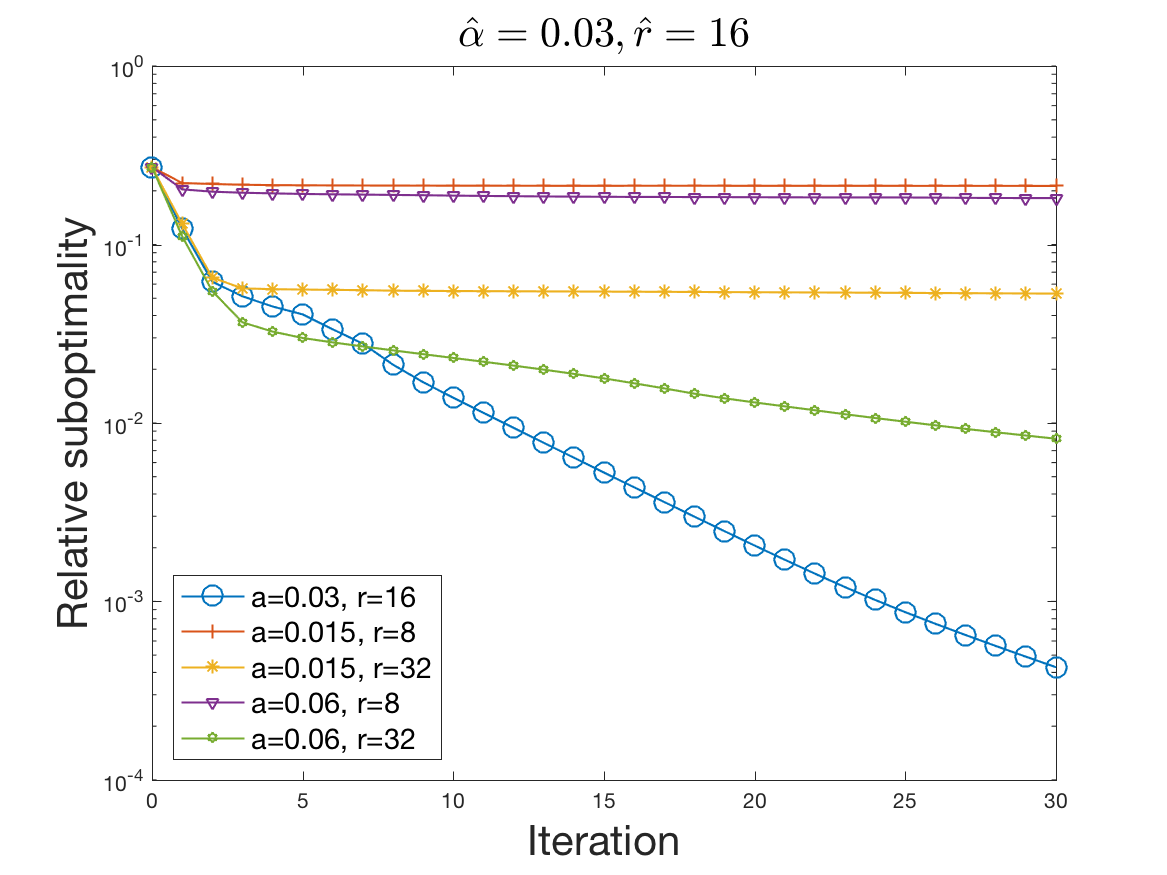}
        %\caption{ Residual vs. iteration  }\label{fig:bl_ex_flops}
\end{minipage}%
\begin{minipage}{0.25\textwidth}
  \centering
\includegraphics[width =  \textwidth ]{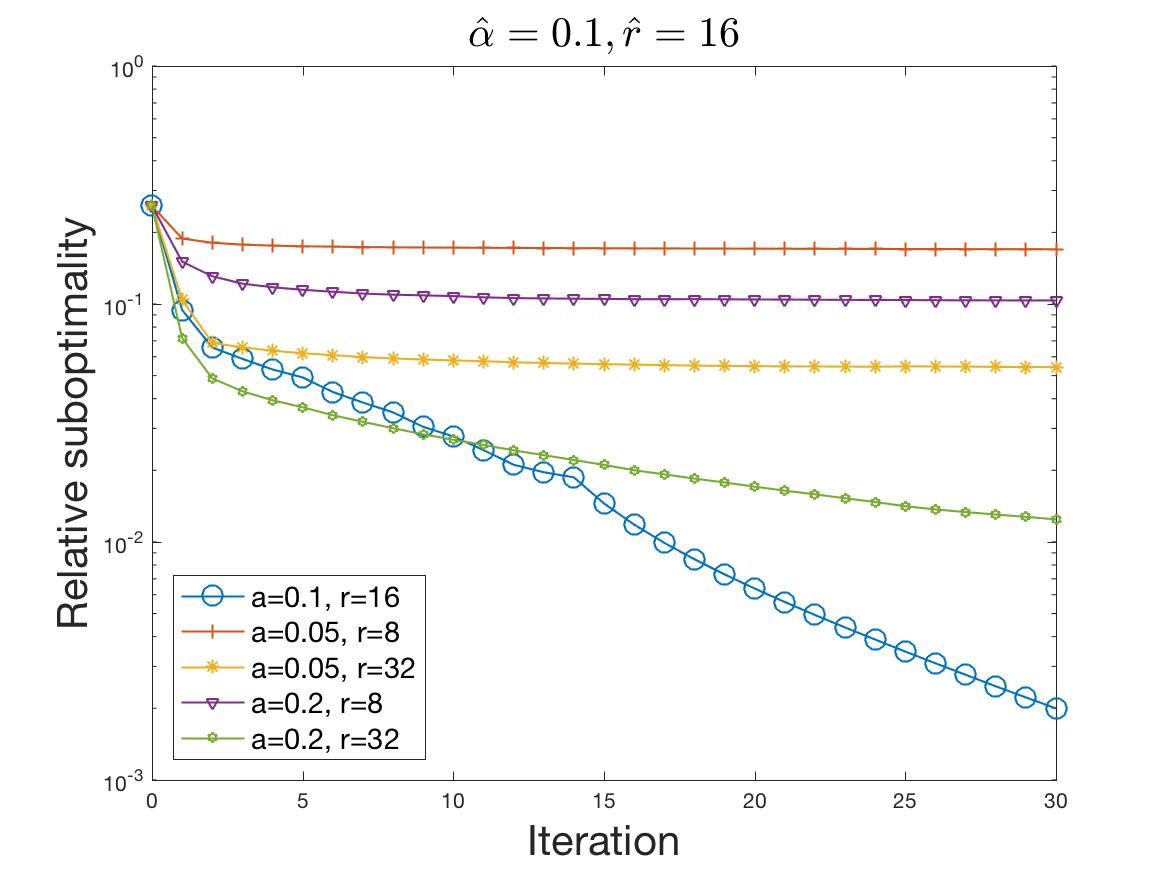}
        %\caption{ Residual vs. iteration  }\label{fig:bl_ex_flops}
\end{minipage}%
\begin{minipage}{0.25\textwidth}
  \centering
\includegraphics[width =  \textwidth ]{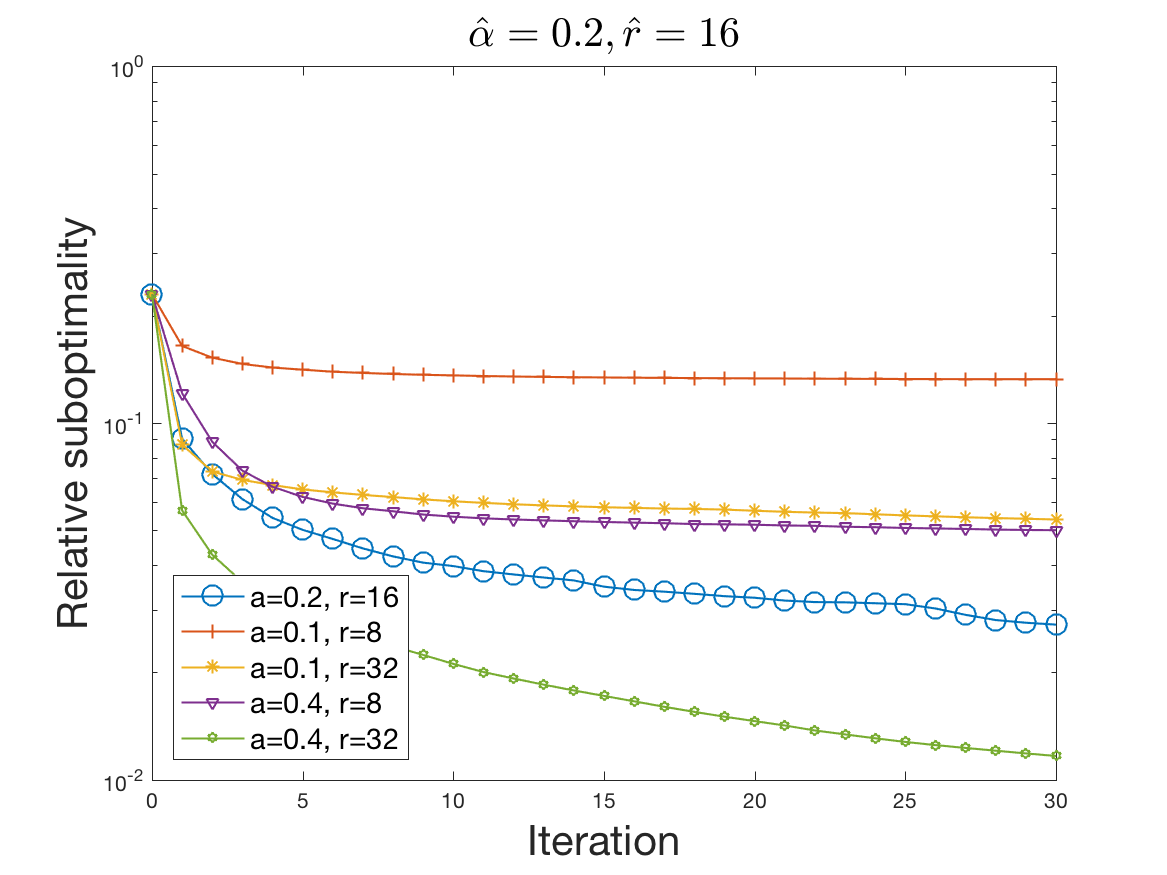}
        %\caption{ Residual vs. iteration  }\label{fig:bl_ex_flops}
\end{minipage}%
\begin{minipage}{0.25\textwidth}
  \centering
\includegraphics[width =  \textwidth ]{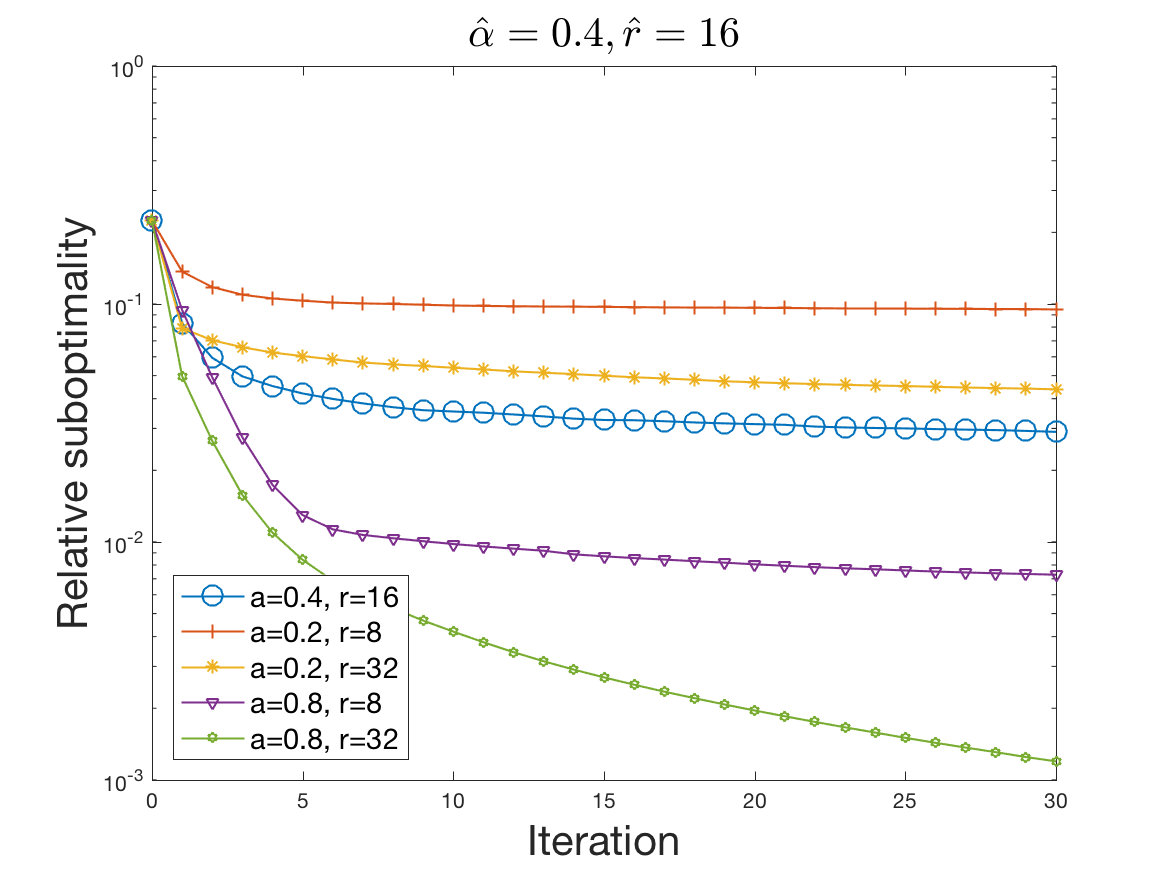}
        %\caption{ Residual vs. iteration  }\label{fig:bl_ex_flops}
\end{minipage}%
\\
\begin{minipage}{0.25\textwidth}
  \centering
\includegraphics[width =  \textwidth ]{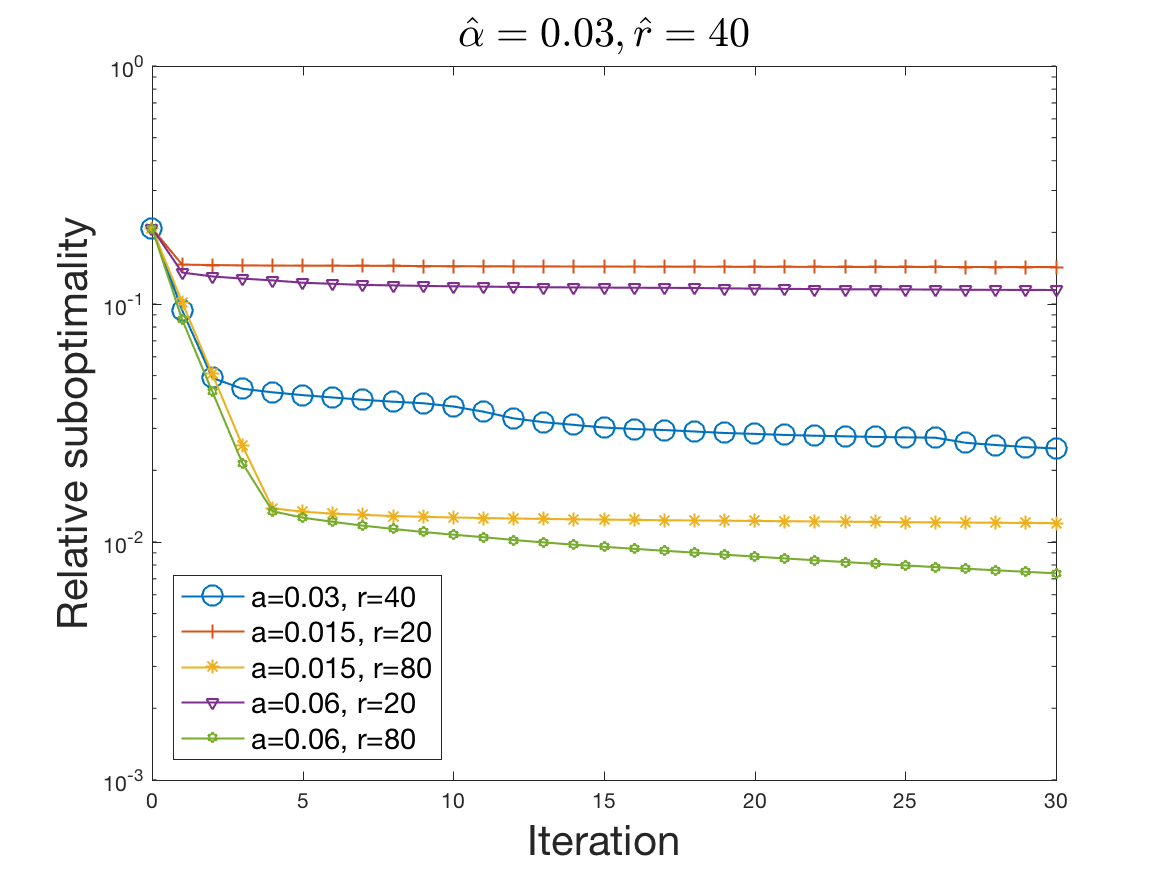}
        %\caption{ Residual vs. iteration  }\label{fig:bl_ex_flops}
\end{minipage}%
\begin{minipage}{0.25\textwidth}
  \centering
\includegraphics[width =  \textwidth ]{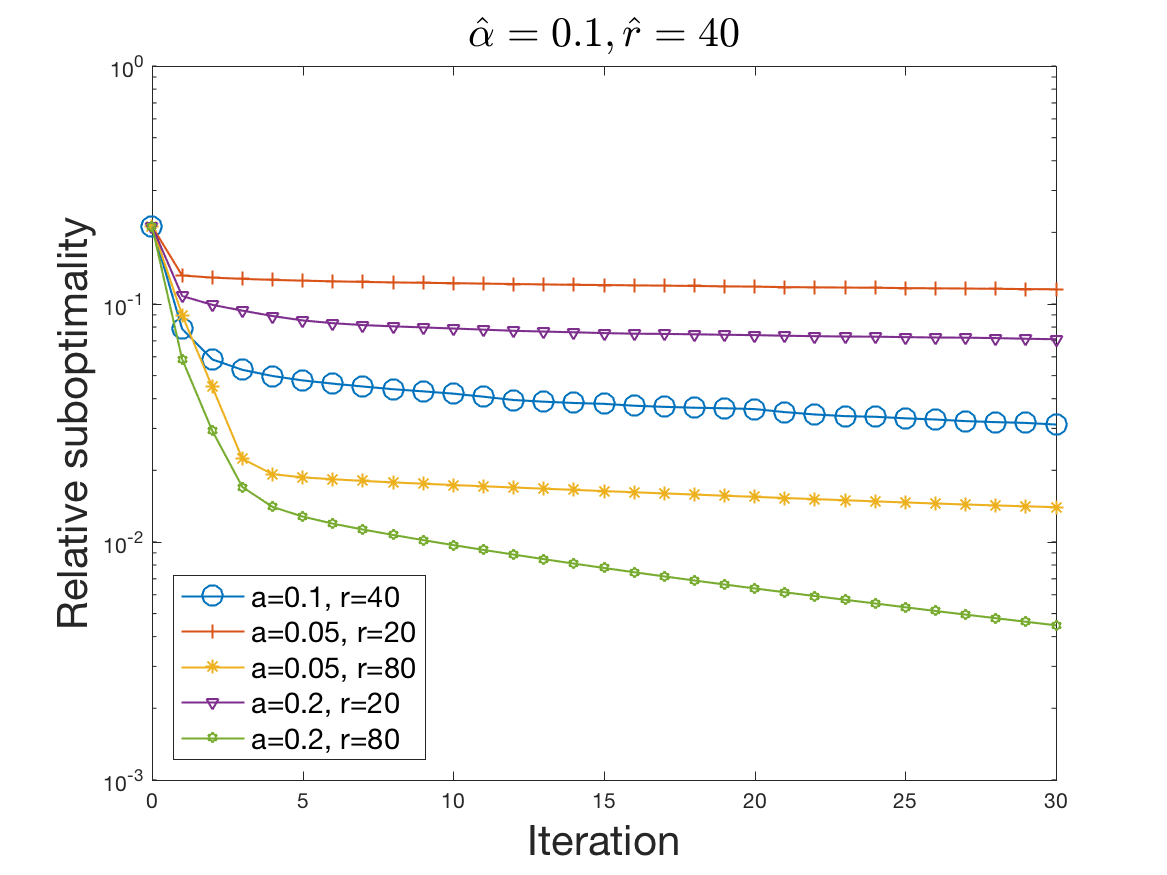}
        %\caption{ Residual vs. iteration  }\label{fig:bl_ex_flops}
\end{minipage}%
\begin{minipage}{0.25\textwidth}
  \centering
\includegraphics[width =  \textwidth ]{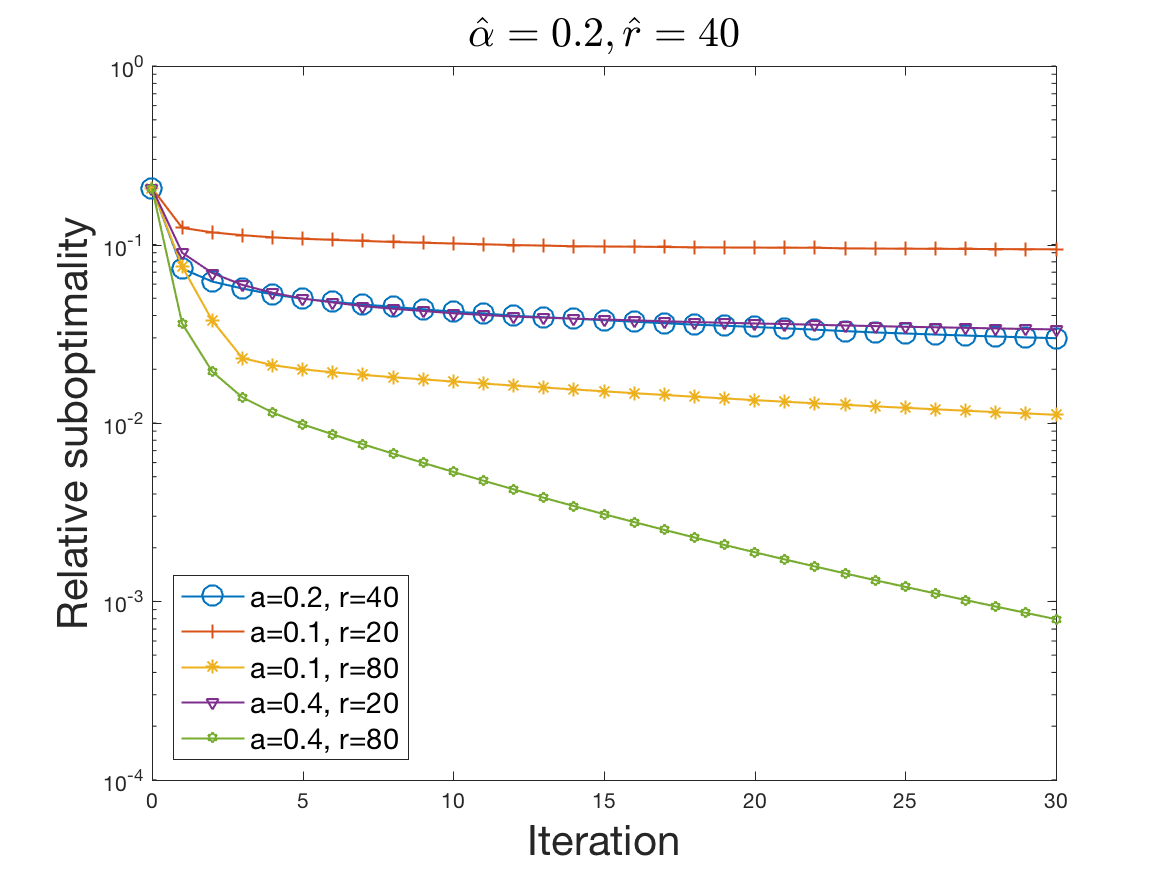}
        %\caption{ Residual vs. iteration  }\label{fig:bl_ex_flops}
\end{minipage}%
\begin{minipage}{0.25\textwidth}
  \centering
\includegraphics[width =  \textwidth ]{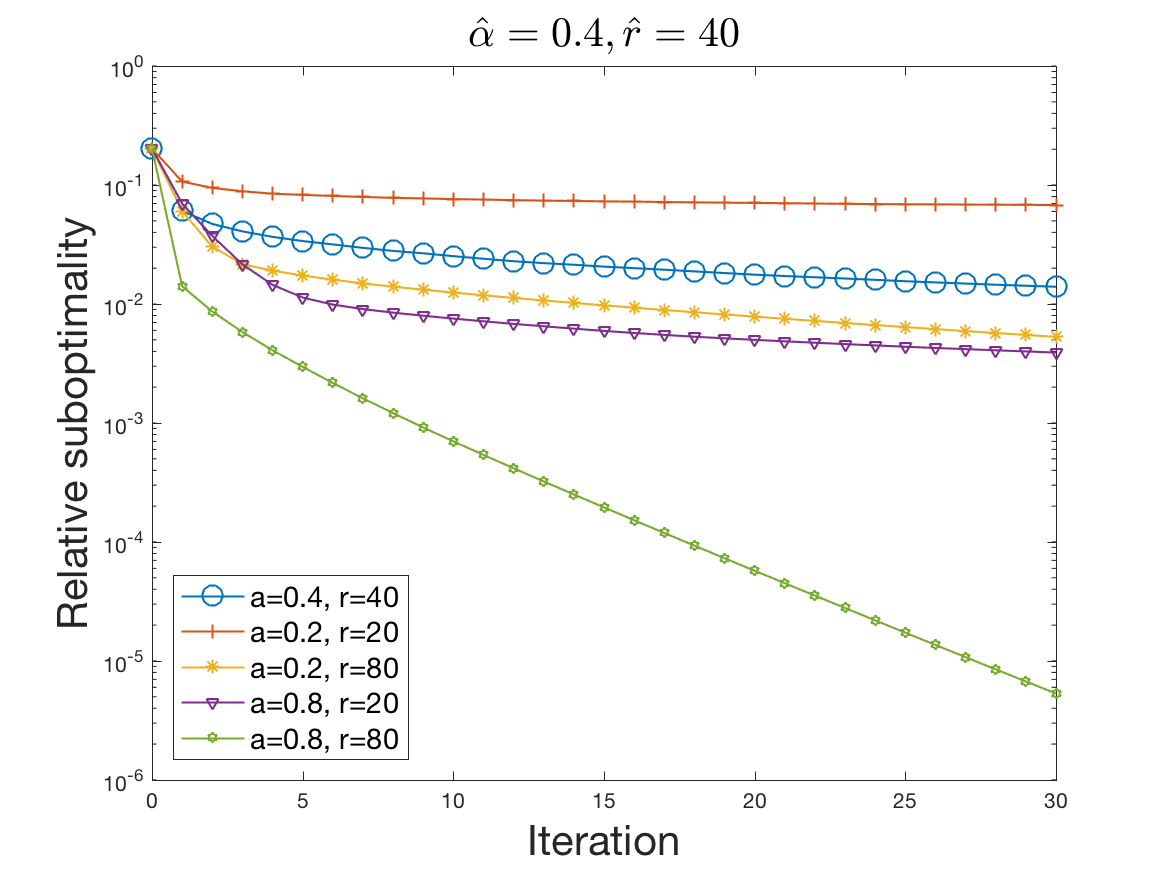}
        %\caption{ Residual vs. iteration  }\label{fig:bl_ex_flops}
\end{minipage}%
\caption{Convergence of Algorithm~\ref{rmc_algo} for the different choices of $\Omega$. Each line corresponds to a random percentage of observable entries and shows normalized $\ell_2$ norm of $(A-L_k-S_k)_{\Omega}$. }\label{fig:sens}
\end{figure}

\section{Two Examples of Convergence \label{sec:convergence_app}}

In this section we give two examples of a convex version of the alternating projection method on a problem with similar (block) structure as~\eqref{eq:feas_prob}. The first example shows that the convergence might be extremely fast and independent on $A$, and the second one demonstrates that the rate might not be linear even under convexity. 

\begin{example}\label{ex:linear}
Consider problem~\eqref{eq:feas_prob} with $\cX_1$ defined as~\eqref{eq:X1_defF} and both $\cX_2, \cX_3$ satisfy the same linear constraint.
\end{example}

\begin{lemma}\label{lem:linear}
Alternating projection algorithm applied on Example~\ref{ex:linear} converges in 1 iteration. 
\end{lemma}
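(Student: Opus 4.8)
The plan is to unwind the structure of Example~\ref{ex:linear}: we have three sets $\cX_1, \cX_2, \cX_3$, all of which are affine (linear) subspaces of $\R^{m\times n}\times \R^{m\times n}$, with $\cX_2$ and $\cX_3$ both defined by the \emph{same} linear constraint, while $\cX_1 = \{[L,S] : L+S=A\}$. Since the algorithm alternates between projecting onto $\cX_1$ and onto $\cX_\cap = \cX_2\cap\cX_3$, and since here $\cX_2 = \cX_3$ (they impose the same constraint), we have $\cX_\cap = \cX_2 = \cX_3$, which is itself an affine subspace, so $\Pi_\cap$ is a genuine orthogonal projection onto an affine subspace. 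The key observation is that for alternating projections between \emph{two} affine subspaces, the iterates converge in one step to the projection onto the intersection — but only under a transversality/orthogonality-type condition on the two subspaces. Here the relevant structure is that $\cX_\cap$ is ``block-diagonal'' in $[L,S]$ while $\cX_1$ couples $L$ and $S$ only through the sum; I expect that the specific interplay of these two constraints forces the composed map $\Pi_\cap\Pi_1$ (or $\Pi_1\Pi_\cap$) to land exactly on $\cX_1\cap\cX_\cap$ after a single application.

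Concretely, the steps I would carry out are: (i) write down $\Pi_1$ explicitly using Lemma~\ref{sp} — starting from $[L_0,S_0]$, one step onto $\cX_1$ gives $[\tfrac12(L_0-S_0+A), \tfrac12(S_0-L_0+A)]$, whose two blocks sum to $A$ and differ by $L_0-S_0$; (ii) write down $\Pi_\cap$: since $\cX_2,\cX_3$ enforce the same linear constraint $\mathcal{C}(L)=0$ (say) and don't touch $S$, projection onto $\cX_\cap$ leaves $S$ unchanged and replaces $L$ by its projection $P L$ onto $\{L:\mathcal{C}(L)=0\}$; (iii) compose: apply $\Pi_\cap$ then $\Pi_1$ (matching the algorithm's order: the algorithm does $\Pi_1$ then $\Pi_2$ then $\Pi_3$, so effectively $\Pi_\cap \Pi_1$ per outer iteration) and check that after one full iteration the point already lies in $\cX_1\cap\cX_\cap$ and is fixed by all subsequent projections. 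The cleanest route is to verify that the output of the first iteration satisfies both $L+S=A$ and $\mathcal{C}(L)=0$ simultaneously, and that it equals the orthogonal projection of $[L_0,S_0]$ onto $\cX_1\cap\cX_\cap$, hence is a fixed point.

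The main obstacle — and the place where the hypothesis that $\cX_2$ and $\cX_3$ share the \emph{same} constraint is essential — is showing that projecting onto $\cX_1$ after projecting onto $\cX_\cap$ does not destroy membership in $\cX_\cap$. In general, alternating between two affine subspaces $V,W$ does \emph{not} terminate in one step unless $V^\perp$ and $W^\perp$ are suitably orthogonal (equivalently, the angle between $V$ and $W$ relative to $V\cap W$ is $\pi/2$, so the cosine $c$ in Definition~\ref{angle_defn} is $0$). Here $\cX_\cap$ constrains only the $L$-block (and only within a subspace of $L$-space), whereas $\cX_1$'s normal space is spanned by directions of the form $[\Delta,\Delta]$ (equal perturbations in both blocks). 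I would argue that the component of any $[\Delta,\Delta]$ lying in the part of $L$-space constrained by $\mathcal C$ is already ``balanced out'' by the $S$-block, so that the correction made by $\Pi_1$ is orthogonal to $\cX_\cap$'s defining directions — i.e.\ $c(\cX_1,\cX_\cap,\cdot)=0$ in this example — and then invoke the standard fact that alternating projections between two affine subspaces meeting at angle $\pi/2$ reach the intersection in one step. Verifying that angle-$\pi/2$ condition (or, equivalently, directly checking the fixed-point property by the explicit formulas in steps (i)–(iii)) is the technical heart; everything else is bookkeeping with the Kronecker-product/vectorization identities already used in the proof of Lemma~\ref{sp}.
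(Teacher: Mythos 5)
Your overall strategy (reduce to affine subspaces, write the two projectors explicitly, and show the composed map lands in the intersection after one application) is the same as the paper's, but there is a genuine gap exactly at the point you call the technical heart: you have misidentified the structure of $\cX_2$ and $\cX_3$ in Example~\ref{ex:linear}. You take $\cX_2=\cX_3=\{[L,S]:\mathcal{C}(L)=0\}$, so that $\Pi_\cap$ ``leaves $S$ unchanged and replaces $L$ by $PL$''. The example is meant to keep the block structure of \eqref{eq:feas_prob} (recall that $\Pi_2$ does not touch $S$ and $\Pi_3$ does not touch $L$): the \emph{same} linear constraint is imposed on the $L$-block in $\cX_2$ and on the $S$-block in $\cX_3$, so after vectorization $\cX_\cap=\{(x,y):Qx=0,\;Qy=0\}$ and $\Pi_\cap=\mathrm{diag}(P,P)$ --- which is exactly the projector the paper uses (its displayed definitions of the translated $\cX_2,\cX_3$ are written loosely, but the projector $\mathrm{diag}(P,P)$ makes the intended reading unambiguous).

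This is not cosmetic: under your reading the lemma is false, so step (iii) and the angle-$\pi/2$ claim cannot be verified. Indeed, with your $\Pi_\cap=\mathrm{diag}(P,I)$ the map of one full iteration (in translated coordinates) is
\[
R'=\begin{pmatrix}P&0\\0&I\end{pmatrix}\begin{pmatrix}\tfrac12 I&-\tfrac12 I\\-\tfrac12 I&\tfrac12 I\end{pmatrix},
\]
and any point $(0,p')$ with $Pp'=0$, $p'\neq 0$, satisfies $R'(0,p')=\tfrac12(0,p')$: the iterates converge only geometrically at rate $1/2$ and never reach the intersection in finitely many steps. Equivalently, the orthogonality you hoped for fails: the correction directions of $\Pi_1$ have the form $(\Delta,\Delta)$, while the normal directions of your $\cX_\cap$ have the form $(w,0)$ with $w\perp\mathrm{Null}(Q)$, and $\langle(\Delta,\Delta),(w,0)\rangle=\langle\Delta,w\rangle$ need not vanish. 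With the correct $\Pi_\cap=\mathrm{diag}(P,P)$ the difficulty evaporates: one iteration gives $\Pi_\cap\Pi_1(x_0,y_0)=\bigl(\tfrac12 P(x_0-y_0),\,-\tfrac12 P(x_0-y_0)\bigr)$, which visibly satisfies both $x+y=0$ and $Qx=Qy=0$, hence lies in $\cX_1\cap\cX_\cap$ and is fixed thereafter; this is precisely the content of the paper's argument that all eigenvalues of the composed map are $0$ or $1$. So the fix is to correct the identification of $\cX_\cap$; with that change the rest of your outline goes through along the paper's lines.
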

\begin{proof}
For simplicity, let us vectorize $L,S$: $x\eqdef\text{vec}(L), y\eqdef\text{vec}(S)$ and denote $I$ to be $mn\times mn$ identity matrix.
Since the constraints are linear, the alternating projection algorithm applied on them converges as fast as alternating projection applied on any affine translation of them such that nonempty intersection property holds.  Let us therefore, without loss of generality consider the following linearly translated problem for some matrix $Q$:
\begin{eqnarray*}
\cX_1 &\eqdef& \left\{ \begin{pmatrix}
x \\y
\end{pmatrix} \, \Big| \,  \begin{pmatrix}
I &I
\end{pmatrix} \begin{pmatrix}
x \\y
\end{pmatrix} =0 \right \}
\\
\cX_2 &\eqdef& \left\{ \begin{pmatrix}
x \\0
\end{pmatrix} \, \Big| \,  Qx =0 \right \}
\\
\cX_3 &\eqdef& \left\{ \begin{pmatrix}
0 \\y
\end{pmatrix} \, \Big| \,  Qy =0 \right \}.
\end{eqnarray*}
Therefore we have for some projection matrix $P=P(Q)$
\begin{eqnarray*}
\pi_{\cX_2\cap \cX_3}\left(\pi_{\cX_1}  \begin{pmatrix}
x \\y
\end{pmatrix} \right)
&=&
 \underbrace{
\begin{pmatrix}
P & 0\\0 &P
\end{pmatrix}
\begin{pmatrix}
\frac12I & -\frac12I \\-\frac12I & \frac12I 
\end{pmatrix}
}_{R}
 \begin{pmatrix}
x \\y
\end{pmatrix}
\end{eqnarray*}
and the convergence of the algorithm is determined by the maximal eigenvalue of $R$ which is not $0$ or $1$ in the absolute value. Clearly, vectors of type
\[
 \begin{pmatrix}p_1 \\ p_1\end{pmatrix},  \begin{pmatrix}p_2 \\-p_2\end{pmatrix}, \begin{pmatrix}p_1' \\ p_1'\end{pmatrix},   \begin{pmatrix}p _2'\\-p_2'\end{pmatrix}, 
\]
might form an orthonormal basis of the space for $p_1,p_2\in \range{P} $ and $p_1',p_2'\perp \range{P} $. However, each of them is an eigenvector of $R$ with eigenvalue 0 or 1, which finishes the proof. 

\end{proof}

We will now present an example where linear convergene rate cannot be attained. 
\begin{example}\label{ex:balls}
Consider problem~\eqref{eq:feas_prob} for $A\in \R^2$ with $\cX_1$ defined as~\eqref{eq:X1_defF} and both $\cX_2, \cX_3$ are unit balls. 
\end{example}

The next lemma shows that there exist a problem of structure~\eqref{eq:feas_prob}, for which alternating projection algorithm does not attain a linear convergence rate. 
\begin{lemma}\label{lem:balls}
Suppose that  $\cX$ is nonempty. 
There exists a starting point such that for Example~\ref{ex:balls}, alternating projection algorithm does not converge linearly. 
\end{lemma}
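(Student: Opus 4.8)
The plan is to exhibit a concrete instance of Example~\ref{ex:balls} in which the alternating projection iterates approach the intersection $\cX$ only sublinearly, which happens precisely when the two sets $\cX_1$ and $\cX_{\cap} \eqdef \cX_2 \cap \cX_3$ meet tangentially, i.e.\ $c(\cX_1,\cX_\cap,\bar p)=1$ in the language of Definition~\ref{angle_defn}. Concretely, working in $\R^2$ so that $M=[L,S]$ is a point in $\R^4$ with $L,S\in\R$, I would take $A$ to be a vector of the form $A=(a,a)$ with $|a|=\sqrt 2$, so that the affine subspace $\cX_1=\{(L,S): L+S=A\}$ is a line in $\R^4$, and $\cX_2,\cX_3$ are the closed unit balls in the $L$- and $S$-coordinates respectively. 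With this choice, $\cX_\cap$ is the product of two unit balls (a ``box'' with rounded structure), and the line $\cX_1$ is tangent to it: the intersection $\cX=\cX_1\cap\cX_\cap$ is the single point $\bar p=[L,S]$ with $L=S=a/2$ lying on the boundary of both unit balls when $a=\pm\sqrt2$ (since then $|a/2|=1/\sqrt2<1$ — so I should instead pick $a$ so that $|a/2|=1$, i.e.\ $a=\pm 2$, making $\bar p$ a boundary point of $\cX_\cap$ and of $\cX_1$, with the line tangent to the rounded box at that corner-like point).

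The key steps, in order: (1) Reduce to the two-dimensional picture. Since $\Pi_2$ only touches the $L$-block and $\Pi_3$ only the $S$-block, and by Lemma~\ref{sp} the projection onto $\cX_1$ acts by $L\mapsto\frac12(L-S+A)$, $S\mapsto\frac12(S-L+A)$, the whole iteration decouples along the ``diagonal'' direction $(L,S)\mapsto(L+S)$ versus the ``anti-diagonal'' direction, and in each scalar coordinate one is left with alternating projections between a line and an interval $[-1,1]$ in $\R^2$. (2) Arrange the geometry so that this line is tangent to the corresponding disk/box at the limit point. In the tangential configuration, the displacement of the iterate toward the feasible point after one full cycle of projections shrinks like $1-\Theta(d_k)$ rather than by a fixed factor, where $d_k=d_{\cX}(M_k)$; this is the standard behaviour of alternating projections between tangent convex sets. (3) Derive the scalar recursion $d_{k+1}\approx d_k - c\, d_k^{3/2}$ (or $d_{k+1}\approx d_k(1-c\sqrt{d_k})$) by a local second-order expansion of the boundary of the ball near the tangency point, and conclude that $d_k\to 0$ but $d_k = \Theta(1/k^2)$, hence not $O(\rho^k)$ for any $\rho<1$. (4) Check that $\cX$ is nonempty in this instance, so the hypothesis of the lemma is satisfied, and that the starting point can be chosen on $\cX_1$ near $\bar p$ but not equal to it.

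The main obstacle is step~(3): making the sublinear-rate estimate rigorous. One must show both the upper bound (the iterates do converge, so the example is not vacuous) and, more delicately, the lower bound $d_k\ge c'/k^2$, which requires a careful local analysis of the curvature of $\partial\cX_\cap$ at the tangency point and control of how the alternating projection ``slides along'' the curved boundary. A clean way to do this is to parametrize the boundary of the unit ball near the contact point as a parabola $y=\tfrac12 x^2+O(x^3)$, track one coordinate of the iterate (the horizontal offset $x_k$ from the contact point), show the one-step map is $x_{k+1}=x_k-\tfrac12 x_k^3+O(x_k^4)$ or similar, and then invoke the elementary fact that such a recursion satisfies $x_k\sim c/\sqrt k$, giving $d_k\sim c'/k$ and ruling out linear convergence. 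I would also remark that this is exactly the phenomenon flagged in Remark~\ref{no_conv}, where $c(\cX_1,\cX_\cap,\bar p)=1$ halts linear convergence.
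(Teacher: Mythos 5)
Your overall strategy --- force $\cX_1$ to meet the ball constraints tangentially at the unique feasible point and then use the curvature of the ball to get a sublinear rate --- is exactly the mechanism behind the paper's proof, which takes $A=(2,0)$, $L_0=(\sqrt2,\sqrt2)$, $S_0=(\sqrt2,-\sqrt2)$, reduces by symmetry to alternating projections between the unit disk and its tangent line $x=1$, and concludes that this does not converge linearly. However, your concrete instantiation has a genuine error. In Example~\ref{ex:balls} we have $A\in\R^2$ and $L,S\in\R^2$ constrained to Euclidean unit balls, so the unique feasible point is $L=S=A/2$ and tangency occurs precisely when $\|A\|=2$, i.e.\ $\|A/2\|=1$. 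Your first choice $A=(\sqrt2,\sqrt2)$ satisfies this, since $\|A/2\|=\|(\tfrac{1}{\sqrt2},\tfrac{1}{\sqrt2})\|=1$; the subsequent ``correction'' to $a=\pm2$ confuses the scalar component $|a/2|$ with the Euclidean norm and produces $\|A\|=2\sqrt2>2$, for which $\cX_1\cap\cX_2\cap\cX_3=\emptyset$ --- the nonemptiness hypothesis of the lemma fails and there is no tangency point to analyze. You should revert to $\|A\|=2$ (the paper's $A=(2,0)$, or your $(\sqrt2,\sqrt2)$, which is the same instance up to rotation).

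Second, the reduction in your step (1) does not work as stated: the Euclidean projection onto the unit disk does not decouple coordinatewise, so the iteration is not ``alternating projections between a line and an interval $[-1,1]$'' in each scalar coordinate --- and if it were, both sets would be polyhedral and the method would converge linearly, destroying the counterexample (your own step (3) relies on the curvature that the interval picture throws away). The correct reduction, as in the paper, is by symmetry: choose $L_0,S_0$ to be mirror images about the axis of $A$, so the iterates remain symmetric and the dynamics collapses to the genuine two-dimensional picture of the unit disk and its tangent line. Once there, your step (3) is the right kind of computation, though the rates you quote are mutually inconsistent: on the tangent line the offset obeys $y_{k+1}=y_k/\sqrt{1+y_k^2}$, hence $1/y_{k+1}^2=1/y_k^2+1$ and $y_k=\Theta(1/\sqrt{k})$ --- neither $\Theta(1/k^2)$ nor $\Theta(1/k)$ as you write, but in any case sublinear, so with the corrected instance and reduction the conclusion goes through.
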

\begin{proof}
Choose 
\[A=\begin{pmatrix}
2\\0
\end{pmatrix}, \qquad
L_0=
\begin{pmatrix}
\sqrt{2}\\\sqrt{2}
\end{pmatrix},\qquad \mbox{and} \qquad
S_0=
\begin{pmatrix}
\sqrt{2}\\-\sqrt{2}
\end{pmatrix}. 
\]
Clearly, in optimum we must have $L^*=e_1, S^*=e_1$. 
It is a simple exercise to notice that $L,S$ are projected each iteration onto line $x=1$ and then back to the unit circle. Therefore, alternating projection onto $\eqref{eq:feas_prob}$ converges as fast as alternating projection onto unit ball and its tangent line. However, it is easy to see that the latter algorithm does not enjoy a linear convergence. 
\end{proof}

\section{Block Krylov SVD~\cite{musco2015randomized} \label{sec:svd_alg}}
\begin{algorithm}[H]
\SetAlgoLined
	\SetKwInOut{Input}{Input}
	\SetKwInOut{Output}{Output}
	\SetKwInOut{Init}{Initialize}
\caption{Block Krylov SVD \cite{musco2015randomized} (BKSVD)}
\Input{$L \in \mathbb{R}^{m \times n}$, tolerance $\tilde{\epsilon} \in (0,1)$, rank $r \le m,n$} 

\nl \quad $q \eqdef \Theta(\frac{\log d}{\sqrt{\tilde{\epsilon}}})$, $\Pi \sim \mathcal{N}(0,1)^{n \times r}$

\nl \quad$K \eqdef \left [ L\Pi, (LL^\top  )L\Pi,..., (LL^\top  )^q L\Pi\right]$  \label{nl:K_form} 

\nl  \quad Orthonormalize the columns of $K$ to obtain $Q \in \R^{m\times qr}$ \label{nl:K_ort}

\nl \quad Compute $M \eqdef Q^\top LL^\top Q \in \mathbb{R}^{qr \times qr}$

\nl \quad Set ${\bar U}_r$ to the top $r$ singular vectors of $M$

\nl \Output{$Z = Q{\bar U}_r$}
\end{algorithm}

\end{document}